\titleformat{\subsection}[runin]
  {\normalfont\bfseries}
  {\thesubsection}
  {0.5em}
  {}
  [.]
\titleformat{\subsubsection}[runin]
  {\normalfont\bfseries}
  {\thesubsubsection}
  {0.5em}
  {}
  [.]
\DeclareFontFamily{U}{mathx}{}
\DeclareFontShape{U}{mathx}{m}{n}{<-> mathx10}{}
\DeclareSymbolFont{mathx}{U}{mathx}{m}{n}
\DeclareMathAccent{\widehat}{0}{mathx}{"70}
\DeclareMathAccent{\widecheck}{0}{mathx}{"71}
\newcommand{\C}{\mathbb{C}}
\newcommand{\R}{\mathbb{R}}
\newcommand{\Z}{\mathbb{Z}}
\newcommand{\Ball}{\mathbb{B}}
\newcommand{\Hyperbolic}{\mathbb{H}}
\newcommand{\Teich}{\mathcal{T}}
\newcommand{\RNum}[1]{\lowercase\expandafter{\romannumeral #1\relax}}
\newcommand{\URNum}[1]{\uppercase\expandafter{\romannumeral #1\relax}}
\DeclareMathOperator{\arccosh}{cosh^{-1}}
\DeclareMathOperator{\arcsinh}{sinh^{-1}}
\theoremstyle{definition}
\newtheorem*{defx*}{Definition}
\newtheorem{definition}{Definition}[section]
\theoremstyle{plain}
\newtheorem{thmx}{Theorem}
\newtheorem{lemma}[definition]{Lemma}
\newtheorem{corollary}[definition]{Corollary}
\newtheorem{proposition}[definition]{Proposition}
\theoremstyle{remark}
\newtheorem{remark}[definition]{Remark}
\DeclareMathOperator{\Mod}{Mod}
\DeclareMathOperator{\Aut}{Aut}
\DeclareMathOperator{\Hom}{Hom}
\DeclareMathOperator{\sys}{sys}
\DeclareMathOperator{\diam}{diam}
\DeclareMathOperator{\Sing}{Sing}
\DeclareMathOperator{\Int}{\iota}
\DeclareMathOperator{\Kob}{Kob}
\newcommand{\Moduli}{\mathcal{M}}
\newcommand{\dhyp}{d_{\Hyperbolic^2}}
\newcommand{\dtei}[1]{d_{\Teich_{#1}}}
\newcommand{\dm}[1]{d_{\Moduli_{#1}}}
\newcommand{\qii}{\text{\textbf{q.i.i.}}}
\newcommand{\qiit}{\widetilde{\text{\textbf{q.i.i.}}}}
\newcommand{\qiet}{\widetilde{\text{\textbf{q.i.e.}}}}
\algrenewcommand\algorithmicrequire{\textbf{Input:}}
\algrenewcommand\algorithmicensure{\textbf{Output:}}
\NewDocumentCommand\vvec{mg}{\overrightarrow{#1}}
\title{Quasi-isometric embeddings for shrinking maps\\from surfaces into the moduli space}
\author{
  \Large{Yibo Zhang} \\
  \small{\emph{Institut Fourier, UMR 5582, Laboratoire de Mathématiques}} \\
  \small{\emph{Université Grenoble Alpes, CS 40700, 38058 Grenoble cedex 9, France}} \\
  \small{\emph{email:}} \tt{zhangyibo12342000@gmail.com}
}
\date{}
\begin{document}


\maketitle

\begin{abstract}
We investigate shrinking maps from a cusped hyperbolic surface into the moduli space of closed Riemann surfaces.
For such a map and its lift to the Teichm\"uller space, we consider whether they are quasi-isometric embeddings with respect to natural metrics like the Teichm\"uller distance and the intrinsic distance.
Under a mild condition, we prove that these properties are characterised solely by the map's monodromy.
These characterisations apply, in particular, to holomorphic maps.
\end{abstract}

\section{Introduction}

Let $B$ be a hyperbolic surface of type $(g,n)$, where $g\ge 2$ and $n\ge 0$.
Given a map $F:B\rightarrow \Moduli_h$ into the moduli space of Riemann surfaces of genus $h\ge 2$,
any lift of $F$ to the universal covers is a map $\widetilde{F}: \Hyperbolic^2 \rightarrow \Teich_h$
from the upper half plane $\Hyperbolic^2 \subset \C$ to the Teichm\"uller space $\Teich_h$.
The induced homomorphism $F_*:\pi_1(B,t) \rightarrow \Mod_h$
from the surface group to the mapping class group is
called the \emph{monodromy representation} of $F$
(see Subsection \ref{subsection::monodromy}).

The map $F$ is called \emph{differentiable} if the lift $\widetilde{F}$ is differentiable.

The map $F$ is called \emph{holomorphic} if the lift $\widetilde{F}$ is holomorphic.
Here, the Teichm\"uller space
is equipped with the complex structure induced by the Bers embedding $\Teich_h \hookrightarrow \C^{3h-3}$ (see \cite{Bers1970}, \cite{Maskit1970}).
In this case,
the image $F(B)$ is called a \emph{holomorphic curve} in $\Moduli_h$.
By the hyperbolicity, we have the inequality
\begin{equation*}
\Kob_{\Hyperbolic^2}(\widetilde{b},\widetilde{v}) \ge \Kob_{\Teich_h}(\widetilde{F}(\widetilde{b}),d\widetilde{F}(\widetilde{v})),
\end{equation*}
where $\Kob_{\Hyperbolic^2}$
and $\Kob_{\Teich_h}$
denote the Kobayashi norms on $\Hyperbolic^2$ and $\Teich_h$, respectively.
Recall that the \emph{Kobayashi pseudonorm} $\Kob_X$ on a complex manifold $X$ is defined as the largest pseudonorm such that
every holomorphic map from the unit disc $\Ball \subset \C$,
equipped with the Poincar\'e metric of curvature $-4$, into $X$ is non-expanding.

Let $d_B$ and $d_{\Hyperbolic^2}$ denote the hyperbolic distances on $B$ and $\Hyperbolic^2$, respectively.
Let $d_{\Moduli_h}$ be the metric on $\Moduli_h$ induced by the Teichm\"uller distance $d_{\Teich_h}$ on $\Teich_h$.
It is known that the Kobayashi distance on $\Hyperbolic^2$ satisfies
$d_{\Hyperbolic^2,\Kob} = (1/2) d_{\Hyperbolic^2}$
(cf. \cite[Proposition 2.3.4]{Abate}),
while on $\Teich_h$,
the Kobayashi distance and the Teichm\"uller distance coincide, i.e., $d_{\Teich_h,\Kob} = d_{\Teich_h}$ (see \cite[Theorem 3]{Royden1971}).
Consequently, the holomorphic map $F:(B,(1/2)d_B)\rightarrow (\Moduli_h,d_{\Moduli_h})$ is distance-decreasing.

We define a distance $d_{\widetilde{F}}$ on $\Hyperbolic^2$ as the metric induced by the pullback $\widetilde{F}^* \Kob_{\Teich_h}$
of the Kobayashi norm on $\Teich_h$.
Similarly, we define a distance $d_F$ on $B$ as the metric induced by the pullback $F^* \Kob_{\Moduli_h}$, where $\Kob_{\Moduli_h}$ is the quotient Finsler norm on the orbifold $\Moduli_h$.
The metric $d_F$ then coincides with the quotient of $d_{\widetilde{F}}$ on $B$.

\begin{remark}
The intrinsic and extrinsic distances between two image points $q_1=F(b_1)$ and $q_2=F(b_2)$ are given by $d_F(b_1,b_2)$ and $d_{\mathcal{M}_h}(q_1,q_2)$, respectively.
Specifically: the intrinsic distance $d_F$ is the infimum of the lengths of all admissible paths connecting $q_1$ and $q_2$ that lie entirely on the image surface $F(B)$.
Here, ``admissible'' means that the path respects the intrinsic differential structure of $F(B)$, without exploiting extrinsic shortcuts (e.g., jumping across self-intersections).
The extrinsic distance $d_{\mathcal{M}_h}(q_1,q_2)$
is the shortest distance between $q_1$ and $q_2$ within the ambient space $\mathcal{M}_h$
and the minimizing path need not lie on $F(B)$.
\end{remark}

\begin{remark}
Analogously, for points
$\widetilde{F}(\widetilde{b_1})$ and $\widetilde{F}(\widetilde{b_2})$ in $\Teich_h$,
the intrinsic and extrinsic distances are given by  $d_{\widetilde{F}}(\widetilde{b_1},\widetilde{b_2})$ and $\dtei{h}(\widetilde{F}(\widetilde{b_1}),\widetilde{F}(\widetilde{b_2}))$, respectively.
\end{remark}

Holomorphic curves in the moduli space do indeed exist.
A prominent example is the \emph{Teichm\"uller curve} (cf. \cite{McMullen2023}),
defined as the image of a locally isometric embedding $F:B\rightarrow \Moduli_h$.
In this setting, the map $F$ is automatically holomorphic (see \cite{Antonakoudis2017}).
The local isometric embedding implies an isometry
$(B,(1/2)d_B) \cong (B,d_F)$ on the surface,
which lifts to an isometry
$(\Hyperbolic^2, (1/2)d_{\Hyperbolic^2}) \cong (\Hyperbolic^2,d_{\widetilde{F}})$ on the universal cover.
In fact, the image $\widetilde{F}(\Hyperbolic^2)$ forms a complex geodesic with respect to the intrinsic Kobayashi norm.
By Teichm\"uller's uniqueness theorem (cf. \cite[Theorems 11.8 and 11.9]{FarbMargalit2011}), any two points in $\Teich_h$ are connected by a unique real geodesic.
It follows that the lift $\widetilde{F}: (\Hyperbolic^2, (1/2)d_{\Hyperbolic^2}) \rightarrow (\Teich_h,d_{\Teich})$ is an isometric embedding.

We will study the differentiable map $F: B \rightarrow \mathcal{M}_h$ and its lift $\widetilde{F}$ to the Teichmüller space by proving that if $F$ satisfies the following quasi-isometric properties:

\vspace{1em}
\begin{minipage}[t]{0.15\linewidth}

$\qii(F):$

\end{minipage}%
\begin{minipage}[t]{0.75\linewidth}

The identity map is a quasi-isometry between $(B,(1/2)d_B)$ and $(B,d_F)$, namely, there exists $(\lambda,\epsilon)$ with $\lambda \ge 1$ and $\epsilon \ge 0$, such that
\begin{equation*}
\lambda d_B(b_1,b_2) + \epsilon \ge d_F(b_1,b_2) \ge \frac{1}{\lambda} d_B(b_1,b_2) - \epsilon
\end{equation*}
for all $b_1,b_2 \in B$.

\end{minipage}

\vspace{1em}
\begin{minipage}[t]{0.15\linewidth}

$\qiit(F):$

\end{minipage}%
\begin{minipage}[t]{0.75\linewidth}

For some (and hence every) lift $\widetilde{F}$ of $F$, the identity map is a quasi-isometry between $(\Hyperbolic^2,(1/2)d_{\Hyperbolic^2})$ and $(\Hyperbolic^2,d_{\widetilde{F}})$, namely, there exists $(\lambda,\epsilon)$ with $\lambda \ge 1$ and $\epsilon \ge 0$, such that
\begin{equation*}
\lambda \dhyp(\widetilde{b_1},\widetilde{b_2}) + \epsilon
\ge
d_{\widetilde{F}}( \widetilde{b_1}, \widetilde{b_2} )
\ge \frac{1}{\lambda} \dhyp(\widetilde{b_1},\widetilde{b_2}) - \epsilon
\end{equation*}
for all $\widetilde{b_1}, \widetilde{b_2} \in \Hyperbolic^2$.

\end{minipage}

\vspace{1em}
\begin{minipage}[t]{0.15\linewidth}

$\qiet(F):$

\end{minipage}%
\begin{minipage}[t]{0.75\linewidth}

For some (and hence every) lift $\widetilde{F}$ of $F$, the map $\widetilde{F}: (\Hyperbolic^2,(1/2)d_{\Hyperbolic^2}) \rightarrow (\Teich_h, d_\Teich)$ is a quasi-isometric embedding, namely, there exists
$(\lambda,\epsilon)$ with $\lambda \ge 1$ and $\epsilon \ge 0$, such that
\begin{equation*}
\lambda \dhyp(\widetilde{b_1},\widetilde{b_2}) + \epsilon
\ge \dtei{h}( \widetilde{F}(\widetilde{b_1}), \widetilde{F}(\widetilde{b_2}) )
\ge \frac{1}{\lambda} \dhyp(\widetilde{b_1},\widetilde{b_2}) - \epsilon
\end{equation*}
for all $\widetilde{b_1}, \widetilde{b_2} \in \Hyperbolic^2$.

\end{minipage}
\vspace{1em}

The map $F$ is called a \emph{quasi-isometric immersion} if it satisfies property $\qii(F)$.
The lift $\widetilde{F}$ is called a \emph{quasi-isometric immersion} (resp. \emph{quasi-isometric embedding}) if $F$ satisfies $\qiit(F)$ (resp. $\qiet(F)$).
For every holomorphic map $F$, the following implications hold:
$\qiet(F) \Rightarrow \qiit(F) \Rightarrow \qii(F)$,
as established in Lemma \ref{lemma::rigidities-implications}.
Moreover, a Teichm\"uller curve $F(B)$ satisfies all three properties: $\qii(F)$, $\qiit(F)$ and $\qiet(F)$,
as noted prior to their definitions.

\subsection{Quasi-isometric embeddings for a holomorphic map}

The work \cite{Zhang2024holomorphic} established that the monodromy provides a complete characterisation for the first quasi-isometric property $\qii$. Specifically:

\begin{thmx}[Theorem A - 2 in \cite{Zhang2024holomorphic}]
\label{thmx::qii}
Let $F:B\rightarrow \Moduli_h$ be a holomorphic map, where $B$ is a hyperbolic surface of type $(g,n)$ with $g\ge 2$, $n\ge 0$ and $h\ge 2$. Then $F$ satisfies $\qii(F)$ if and only if all peripheral monodromies of $F$ are of infinite order.
\end{thmx}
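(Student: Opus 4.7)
I would argue by contrapositive. Suppose the peripheral monodromy $F_*([\gamma_c])$ around some cusp $c$ is of finite order $k$. Pulling back by the $k$-fold cyclic cover of a punctured-disc neighborhood of $c$ trivialises the monodromy, so by the classical extension theorem for holomorphic maps into moduli space, $F$ extends continuously across $c$ to a point $q_c \in \Moduli_h$ (with image in the interior, not the boundary of the Deligne--Mumford compactification). Choosing a sequence $b_n \in B$ going into $c$, one has $d_B(b_0,b_n) \to \infty$ while $F(b_n) \to q_c$ in $\Moduli_h$, so $d_F(b_0,b_n)$ remains bounded and the lower estimate in $\qii(F)$ fails.

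\textbf{Sufficiency.} Assume now that all peripheral monodromies of $F$ are of infinite order. The upper estimate $d_F \le (1/2)\, d_B$ is automatic: holomorphicity, combined with $d_{\Hyperbolic^2,\Kob} = (1/2)\, d_{\Hyperbolic^2}$ and $d_{\Teich_h,\Kob} = d_{\Teich_h}$, yields the infinitesimal Schwarz inequality, which integrates. For the lower estimate I would use a thick-thin decomposition $B = B_{\mathrm{thick}} \cup C_1 \cup \cdots \cup C_n$ into a compact thick part and standard horocyclic cusp neighborhoods. On $B_{\mathrm{thick}}$, the pullback $F^*\Kob_{\Moduli_h}$ is strictly positive away from the finitely many branch points of $F$ (handled by a local covering argument), and compactness then yields a uniform linear lower bound $d_F \ge c\, d_B$ up to an additive constant. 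Inside each cusp $C_i$ I lift to $\Hyperbolic^2$, presenting $C_i$ as a horoball $\mathcal{H}_i$ stabilised by a parabolic $P_i$, with $\widetilde{F}$ equivariant under the monodromy $\rho_i := F_*([\gamma_{c_i}])$. Two points separated by $m$ fundamental domains of $P_i$ map to a $\rho_i^m$-translate pair, so the required linear-in-$m$ lower bound on $d_{\widetilde{F}}$ reduces to a positive growth rate of the $\rho_i$-orbit measured in the intrinsic pullback metric. Glueing the cusp and thick-part estimates along a uniform buffer zone produces the global quasi-isometry.

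\textbf{Main obstacle.} The delicate case is when $\rho_i$ is reducible and consists only of Dehn (multi-)twists: its Teichm\"uller translation length vanishes, so its orbit in $\Teich_h$ drifts only logarithmically in $\dtei{h}$, and a naive translation-length argument (which suffices for pseudo-Anosov $\rho_i$, and indeed for any $\rho_i$ with a pseudo-Anosov component on some subsurface) cannot supply the linear growth. The resolution should genuinely exploit that $d_F$ is the \emph{intrinsic} pullback metric on the image surface, so distances measured along $\widetilde{F}(\Hyperbolic^2)$ cannot take the Teichm\"uller shortcuts through the thin parts of $\Moduli_h$ associated to the twisted curves. Quantifying this intrinsic trapping---presumably via a Schwarz-type lemma near the cusp, an analysis of $\widetilde{F}$ near the Bers boundary, and use of the fact that $\widetilde{F}(\Hyperbolic^2)$ is a complex submanifold of $\Teich_h$---is the principal technical step; once it is in hand, the thick-thin glueing is routine.
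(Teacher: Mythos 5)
Your necessity argument is essentially the paper's (Proposition~\ref{proposition::torsion-peripheral-qii}, which rests on Imayoshi's extension theorem); just note that boundedness of $d_F(b_0,b_n)$ does not follow directly from $F(b_n)\to q_c$ in $\Moduli_h$, since $d_F$ is intrinsic --- one bounds it by the diameter of the \emph{compact} closed disc $\overline U$ in the pullback metric of the extended holomorphic map. The sufficiency direction, however, has a genuine gap: you explicitly leave unproved the lower bound in a cusp whose peripheral monodromy is a root of a multi-twist, and this is not a corner case but the \emph{only} case --- the image of the length-$\epsilon$ horocycle shows $\tau(F_*([\gamma_{c_i}]))=0$, so an infinite-order peripheral monodromy is always a root of a multi-twist and never has a pseudo-Anosov piece. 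Your proposed reduction is also miscalibrated: two points in the cusp separated by $m$ windings are at hyperbolic distance $\asymp\log m$, not $\asymp m$, so the logarithmic drift of a multi-twist orbit is not an obstruction to be overcome by ``intrinsic trapping''; it is exactly the growth rate one must match. No analysis near the Bers boundary, and no use of the intrinsic metric, is needed.

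The paper closes this gap with Propositions~\ref{proposition::l_epsilon} and~\ref{proposition::qie-cusp}, and the estimate obtained is \emph{extrinsic}, i.e.\ already for $d_{\Moduli_h}(F(b_1),F(b_2))\le d_F(b_1,b_2)$. The mechanism is: the horocycle of length $\epsilon_U(b)$ through $b$ maps to a path of length $\le\epsilon_U(b)/2$ from $\widetilde q=\widetilde F(\widetilde b)$ to $\phi\cdot\widetilde q$, hence $\dtei{h}(\widetilde q,\phi^\mu\cdot\widetilde q)\le\mu\,\epsilon_U(b)/2$; Lemma~\ref{lemma::l_le_teich} then forces $L_{\alpha_i}(\widetilde q)\le C\,\epsilon_U(b)$ for every curve $\alpha_i$ supporting the multi-twist $\phi^\mu$. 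Wolpert's inequality now gives
\begin{equation*}
d_{\Moduli_h}(F(b),F(b_{\max}))\;\ge\;\frac12\log\frac{\sys(F(b_{\max}))}{L_{\alpha_1}(\widetilde q)}\;\ge\;\frac12\log\frac{1}{\epsilon_U(b)}-C'\;=\;\frac12\,d_U(b,\partial U)-C'',
\end{equation*}
which, combined with triangle inequalities, is the needed linear lower bound in the cusp. For the thick part nothing as strong as your positivity-away-from-branch-points argument is required for $\qii$: since $B_{\text{cp}}$ has bounded diameter, the compact part is absorbed into the additive constant (your heavier argument is what the paper needs later, for $\qiit$, where one must control lengths of arbitrary lifted paths rather than distances between points). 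So your outline identifies the right decomposition but is missing the one quantitative input --- the bi-Lipschitz relation between the height $\epsilon_U(b)$ and the lengths of the twisted curves --- on which the whole sufficiency proof turns.
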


We extend this principle, demonstrating that the monodromy also fully characterises the other quasi-isometric properties studied here.
Our first result gives an analogous characterisation for $\qiit$.

\begin{thmx}
\label{thmx::qiit}
Let $F:B\rightarrow \Moduli_h$ be a holomorphic map, where $B$ is a hyperbolic surface of type $(g,n)$ with $g\ge 2$, $n\ge 0$ and $h\ge 2$. Then $F$ satisfies $\qiit(F)$ if and only if all peripheral monodromies of $F$ are of infinite order.
\end{thmx}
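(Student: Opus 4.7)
The plan is to prove both directions of the characterisation.

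Necessity is immediate: if $\qiit(F)$ holds, then by the implication $\qiit(F) \Rightarrow \qii(F)$ (Lemma~\ref{lemma::rigidities-implications}) together with Theorem~\ref{thmx::qii}, all peripheral monodromies are of infinite order.

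For sufficiency, assume all peripheral monodromies are of infinite order. The upper bound $d_{\widetilde{F}} \le (1/2) \dhyp$ in the definition of $\qiit(F)$ is immediate from the Kobayashi distance-decreasing property of the holomorphic lift. The substantive task is the lower bound
\[
d_{\widetilde{F}}(\widetilde{b_1}, \widetilde{b_2}) \ge \tfrac{1}{\lambda}\, \dhyp(\widetilde{b_1}, \widetilde{b_2}) - \epsilon
\]
for appropriate constants. My strategy is to pass to the extrinsic comparison via the trivial inequality $d_{\widetilde{F}} \ge \dtei{h}(\widetilde{F}(\cdot), \widetilde{F}(\cdot))$ and prove the lower bound at the level of the Teichmüller distance, using a thick–thin decomposition $B = B_{\ge \delta} \sqcup C_1 \sqcup \cdots \sqcup C_n$ into a compact thick part and disjoint cusp neighborhoods. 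The preimage in $\Hyperbolic^2$ is a $\pi_1(B)$-invariant thick region, on which $\pi_1(B)$ acts cocompactly, together with horoballs stabilised by parabolic subgroups $\langle \phi_i \rangle$. On the thick region, cocompactness upgrades the quotient comparison from Theorem~\ref{thmx::qii} (which yields $\qii(F)$) into a pointwise comparison on the cover, by a Milnor–Švarc style argument. Inside each horoball, the equivariance $\widetilde{F}(\phi_i^k \widetilde{b}) = F_*(\phi_i)^k \widetilde{F}(\widetilde{b})$ together with the infinite-order hypothesis on $F_*(\phi_i) \in \Mod_h$ should furnish lower bounds on $\dtei{h}(\widetilde{F}(\widetilde{b}), \widetilde{F}(\phi_i^k \widetilde{b}))$ that are linear in $|k|$ when $F_*(\phi_i)$ is pseudo-Anosov and logarithmic in $|k|$ when $F_*(\phi_i)$ is reducible of infinite order, matching the hyperbolic scale $\dhyp(\widetilde{b}, \phi_i^k \widetilde{b}) \approx 2 \log |k|$ along the horocyclic boundary.

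The main obstacle is the deep-cusp regime, where $\widetilde{b}$ lies high in a horoball and both $\dhyp$ and the induced Teichmüller distance shrink to zero. Here, uniform translation-length estimates no longer suffice, especially when $F_*(\phi_i)$ is a Dehn multi-twist and $\widetilde{F}$ enters the thin part of $\Teich_h$. I would resolve this by a local analysis of the punctured-disc map $F|_{C_i}\colon \Delta^* \to \Moduli_h$ near the Deligne–Mumford stratum dictated by $F_*(\phi_i)$, using local coordinates there to show that the pullback Kobayashi norm $\widetilde{F}^* \Kob_{\Teich_h}$ is quasi-isometrically comparable to the hyperbolic norm $\Kob_{\Hyperbolic^2}$ uniformly throughout the horoball. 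Assembling this horoball estimate with the global thick-part comparison would then yield the desired lower bound, completing the proof of $\qiit(F)$.
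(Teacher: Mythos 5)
Your necessity direction is exactly the paper's (Corollary~\ref{corollary::torsion-peripheral-qiit}, i.e.\ Proposition~\ref{proposition::torsion-peripheral-qii} plus Lemma~\ref{lemma::rigidities-implications}), and is fine. The sufficiency direction, however, rests on a reduction that cannot work. You propose to discard the intrinsic metric via $d_{\widetilde{F}} \ge \dtei{h}(\widetilde{F}(\cdot),\widetilde{F}(\cdot))$ and to prove the lower bound for the \emph{extrinsic} Teichm\"uller distance between image points. But a bound of the form $\dtei{h}(\widetilde{F}(\widetilde{b_1}),\widetilde{F}(\widetilde{b_2})) \ge \frac{1}{\lambda}\dhyp(\widetilde{b_1},\widetilde{b_2}) - \epsilon$ is precisely the property $\qiet(F)$, which by Theorem~\ref{thmx::qiet} is equivalent to the monodromy representation $F_*$ being a quasi-isometric embedding of $\pi_1(B,t)$ into $\Mod_h$ --- a strictly stronger condition than all peripheral monodromies having infinite order (it forces $F_*$ to be injective and every non-peripheral element to have image with positive translation length, by Lemma~\ref{lemma::translation_length}). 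Concretely, your ``Milnor--\v{S}varc upgrade'' on the thick part fails: $\qii(F)$ controls only $d_F(b_1,b_2)$, which is the infimum of $d_{\widetilde{F}}$ over \emph{all} pairs of lifts; for two lifts $\widetilde{b}$ and $\gamma\cdot\widetilde{b}$ of the same thick-part point with $F_*(\gamma)$ of zero translation length, $\dhyp$ grows linearly in the power of $\gamma$ while $\dtei{h}$ of the images stays bounded or grows logarithmically, so no pointwise extrinsic comparison on the cover can be extracted from $\qii(F)$ without the full hypothesis of Theorem~\ref{thmx::qiet}.

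The property $\qiit(F)$ is a statement about the length metric $d_{\widetilde{F}}$, and the proof must lower-bound the \emph{length of the image} of an arbitrary path rel its hyperbolic geodesic representative, not the distance between its endpoints. That is what the paper does: it decomposes a path into subarcs lying in cusp regions and subarcs in the compact part; the cusp subarcs are handled by Propositions~\ref{proposition::cusp-quasi-isom} and~\ref{proposition::cusp-bi-lipschitz}, whose engine is the bi-Lipschitz comparison of Proposition~\ref{proposition::l_epsilon} between the height $\epsilon_U(b)$ and the lengths $L_{\alpha_i}(\widetilde{F}(\widetilde{b}))$ of the curves supporting the multi-twist power of the peripheral monodromy, combined with Matsuzaki's twist estimates (Lemma~\ref{lemma::teich_le_log(rl)}) and Lemma~\ref{lemma::hyperbolic_length_cusp}; the thick subarcs are handled by Proposition~\ref{proposition::bilipschitz-path-Bcp}, using that $\Sing(F)$ is discrete for a non-constant holomorphic map (identity theorem) so that the derivative comparison is uniform on a compact set away from finitely many singular points. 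Your ``deep-cusp'' fix via local coordinates near a Deligne--Mumford stratum is a plausible heuristic but is not carried out, and in any case it would again only address the extrinsic quantity. I would mark the sufficiency direction as not established.
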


Consider a group $G$ acting by isometries on a proper length space $(X,d_X)$.
This action induces a pseudometric on $G$ defined by
$d_{G \curvearrowright X}(g_1,g_2) \coloneqq d_X(g_1 \cdot x, g_2 \cdot x)$,
where $x$ is a point in $X$.
The resulting pseudometric space $(G,d_{G \curvearrowright X})$ is well-defined up to quasi-isometry, independent of the choice of the basepoint $x$.

Now, let $B = \Gamma \backslash \Hyperbolic^2$ be a hyperbolic surface. 
Fixing an isomorphism $\rho: \pi_1(B,t) \rightarrow \Gamma$ allows us to define an isometric action of $\pi_1(B,t)$ on $\Hyperbolic^2$ via the deck transformation group $\Gamma$.
The induced metric $d_{\pi_1(B,t) \curvearrowright \Hyperbolic^2}$ is independent of the choice of $\rho$.

\begin{thmx}
\label{thmx::qiet}
Let $F:B\rightarrow \Moduli_h$ be a holomorphic map, where $B$ is a hyperbolic surface of type $(g,n)$ with $g\ge 2$, $n\ge 0$ and $h\ge 2$. Then $F$ satisfies $\qiet(F)$ if and only if
the monodromy representation $F_*: (\pi_1(B,t),d_{\pi_1(B,t) \curvearrowright \Hyperbolic^2}) \rightarrow (\Mod_h, d_{\Mod_h \curvearrowright \Teich_h})$ is a quasi-isometric embedding.
\end{thmx}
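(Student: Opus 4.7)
The plan is to prove the two directions separately.

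\emph{Forward direction} ($\qiet(F) \Rightarrow F_*$ q.i.e.). This is a formal consequence of the $F_*$-equivariance of $\widetilde{F}$. Choose basepoints so that $\widetilde{F}(\gamma \cdot \widetilde{t}) = F_*(\gamma) \cdot \widetilde{F}(\widetilde{t})$ for every $\gamma \in \pi_1(B,t)$. Then
\[
d_{\pi_1(B,t) \curvearrowright \Hyperbolic^2}(\gamma_1, \gamma_2) = \dhyp(\gamma_1 \widetilde{t}, \gamma_2 \widetilde{t}), \qquad d_{\Mod_h \curvearrowright \Teich_h}(F_*\gamma_1, F_*\gamma_2) = \dtei{h}(\widetilde{F}(\gamma_1 \widetilde{t}), \widetilde{F}(\gamma_2 \widetilde{t})),
\]
so $\qiet(F)$ restricted to orbit points immediately yields the quasi-isometric embedding inequalities for $F_*$.

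\emph{Backward direction} ($F_*$ q.i.e. $\Rightarrow \qiet(F)$). The upper bound $\dtei{h}(\widetilde{F}(\widetilde{b_1}), \widetilde{F}(\widetilde{b_2})) \le (1/2)\dhyp(\widetilde{b_1}, \widetilde{b_2})$ is the Kobayashi distance-decreasing property of the holomorphic lift. For the lower bound I would first observe that the q.i.e. of $F_*$ forces every peripheral monodromy to have infinite order, since a peripheral element $\alpha$ acts parabolically, giving $d_{\pi_1 \curvearrowright \Hyperbolic^2}(\mathrm{id}, \alpha^n) \sim 2\log|n|$, which is incompatible with a finite-order $F_*(\alpha)$ having bounded orbit in $\Mod_h$. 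Theorem \ref{thmx::qiit} then furnishes $\qiit(F)$ as an auxiliary tool.

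Next, decompose $\Hyperbolic^2$ $\Gamma$-equivariantly into the preimage of the thick part of $B$ and a disjoint union of horoballs around the parabolic fixed points. For points $\widetilde{b_i}$ in the thick part, there exist orbit points $\gamma_i \widetilde{t}$ within uniform hyperbolic distance $R$; combined with the $(1/2)$-Lipschitz estimate for $\widetilde{F}$, the q.i.e. of $F_*$ transfers to the desired lower bound on $\dtei{h}(\widetilde{F}(\widetilde{b_1}), \widetilde{F}(\widetilde{b_2}))$ up to a uniform additive error.

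The main obstacle is the cusp case, where an orbit approximation can be arbitrarily far in $\Hyperbolic^2$. The strategy is to use the parabolic stabilizer: conjugating so the horoball is the standard one at $\infty$ with generator $\alpha^* : z \mapsto z + 1$, decompose $\widetilde{b_i} = (\alpha^*)^{n_i} \cdot r_i$ with $r_i$ in the fundamental strip $[0,1) \times [Y_0, \infty)$. Equivariance then gives
\[
\dtei{h}(\widetilde{F}(\widetilde{b_1}), \widetilde{F}(\widetilde{b_2})) = \dtei{h}\bigl(\widetilde{F}(r_1),\, F_*(\alpha)^{n_2 - n_1} \widetilde{F}(r_2)\bigr).
\]
I would bound this below by combining (a) the q.i.e. of $F_*$ applied to $\alpha^{n_2-n_1}$, whose orbit distance $\sim 2\log|n_2-n_1|$ matches the cusp hyperbolic distance asymptotics, with (b) the intrinsic estimate from $\qiit(F)$ controlling the strip displacement between $r_1$ and $r_2$. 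The hardest subcase will be when both points lie in the same horoball with $|n_2-n_1|$ large and $r_1, r_2$ at very different heights: there, changes of basepoint for the $\Mod_h$-orbit metric produce error terms proportional to the depths, and the crucial point is to show that these corrections can be absorbed uniformly into the q.i.e. constants $(\lambda, \epsilon)$.
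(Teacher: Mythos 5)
Your forward direction coincides with the paper's argument and is correct: equivariance identifies the two orbit pseudometrics with restrictions of $\dhyp$ and $\dtei{h}\circ\widetilde{F}$, and the thick-part reduction via orbit points within uniform distance $R$ is exactly case (i) of the paper's proof of Theorem \ref{thmx::shrinking} (c). The backward direction, however, has two genuine gaps, both in the cusp analysis. First, you invoke $\qiit(F)$ (via Theorem \ref{thmx::qiit}) to control part of the lower bound on $\dtei{h}(\widetilde{F}(\widetilde{b_1}),\widetilde{F}(\widetilde{b_2}))$. But $\qiit$ is an \emph{intrinsic} statement: it bounds $d_{\widetilde{F}}$ from below, and since $d_{\widetilde{F}}(\cdot,\cdot)\ge \dtei{h}(\widetilde{F}(\cdot),\widetilde{F}(\cdot))$, a lower bound on $d_{\widetilde{F}}$ carries no information about the extrinsic Teichm\"uller distance. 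What is actually needed inside a horoball is the extrinsic estimate of Proposition \ref{proposition::cusp-quasi-isom}.

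Second, your mechanism for the deep-cusp case --- apply the q.i.e.\ of $F_*$ to $\alpha^{n_2-n_1}$ at a fixed orbit basepoint and then change basepoints to $\widetilde{F}(r_1),\widetilde{F}(r_2)$ --- loses an additive error of order $\tfrac12(\log y_1+\log y_2)$ (the depths of $r_1,r_2$), whereas the quantity to be bounded, $\dhyp(\widetilde{b_1},\widetilde{b_2})\approx 2\log|n_2-n_1|-\log y_1-\log y_2$, can remain bounded while the depths grow; the correction therefore cannot be absorbed into uniform constants $(\lambda,\epsilon)$, and the resulting lower bound degenerates to $-\infty$. The paper's mechanism is different: Proposition \ref{proposition::l_epsilon} gives the bi-Lipschitz relation $L_{\alpha_i}(\widetilde{F}(\widetilde{b}))\asymp \epsilon_U(b)$ (this is where holomorphy, i.e.\ the $1$-shrinking hypothesis, is essential), and Matsuzaki's estimates (Lemma \ref{lemma::teich_le_log(rl)}) applied \emph{at the deep point} yield $\dtei{h}(\widetilde{q_L},(\phi^{\mu})^{t}\cdot\widetilde{q_L})\gtrsim \tfrac14\log(|t|\,\epsilon_L)$, so the depth enters multiplicatively inside the logarithm rather than as an additive loss. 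Finally, you do not treat the case of two points lying deep in \emph{distinct} horoballs; there the paper needs Lemma \ref{lemma::peripherals_intersecting} (the supports of the two peripheral multi-twists must intersect, itself a consequence of the q.i.e.\ of $F_*$) together with the collar lemma to exclude the image being simultaneously deep in two disjoint thin parts. Without these ingredients the proposed proof does not close.
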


\begin{remark}
The property $\qiet(F)$ implies that $F_*$ is injective; see Lemma \ref{lemma::translation_length}.
\end{remark}

\begin{remark}
The quasi-isometric embedding
$F_*: (\pi_1(B,t),d_{\pi_1(B,t) \curvearrowright \Hyperbolic^2}) \rightarrow (\Mod_h, d_{\Mod_h \curvearrowright \Teich_h})$
is independent of the hyperbolic structure on $B$.
Specifically, if $B'$ is any hyperbolic surface diffeomorphic to $B$,
then the diffeomorphism $B\rightarrow B'$ induces a new group action of $\pi_1(B,t)$ on $\Hyperbolic^2$ via the deck transformation group of $B'$
but the resulting metric on $\pi_1(B,t)$ is quasi-isometric to $d_{\pi_1(B,t) \curvearrowright \Hyperbolic^2}$.
\end{remark}

\begin{remark}
Consider the case where $n=0$.
Note that the metric $d_{\pi_1(B,t) \curvearrowright \Hyperbolic^2}$ is quasi-isometric to the word metric.
When $F_*$ is injective and the monodromy group $G \coloneqq F_*(\pi_1(B,t))$ is convex cocompact, then the induced map
$F_*: (\pi_1(B,t),d_{\pi_1(B,t) \curvearrowright \Hyperbolic^2}) \rightarrow (\Mod_h, d_{\Mod_h \curvearrowright \Teich_h})$
becomes a quasi-isometric embedding.
Following \cite{Farb-Mosher-2002}, the convex cocompactness of $G$ requires that 
its orbit in Teichm\"uller space is quasi-convex.
Consequently, for any fixed $x \in \Teich_h$, the orbit map
$G \ni g \mapsto g \cdot x \in \Teich_h$ is a quasi-isometric embedding from $G$ to $\Teich_h$.
An equivalent definition, proved in \cite{Kent-Leininger-2008} and \cite{Hamenstaedt-2005}, states that convex cocompactness holds 
precisely when for any point $y$ in the curve graph $\mathcal{C}_h$ of the genus-$h$ surface, the orbit map $G \ni g \mapsto g \cdot y \in \mathcal{C}_h$ is a quasi-isometrc embedding.
This condition is further equivalent to requiring that
$F_*: (\pi_1(B,t),d_{\pi_1(B,t) \curvearrowright \Hyperbolic^2}) \rightarrow (\Mod_h, d_{\Mod_h \curvearrowright \mathcal{C}_h})$
be a quasi-isometric embedding.
\end{remark}

\begin{remark}
The quasi-isometric embedding
$F_*: (\pi_1(B,t),d_{\pi_1(B,t) \curvearrowright \Hyperbolic^2}) \rightarrow (\Mod_h, d_{\Mod_h \curvearrowright \Teich_h})$
implies that $F_*(\pi_1(B,t))$ is essentially purely having positive translation length.
That is, every non-trivial, non-peripheral element of $\pi_1(B,t)$ maps to an element in $\Mod_h$ with positive translation length (see Lemma \ref{lemma::translation_length} (a)).
When $n=0$, this property of the monodromy group is weaker than being purely pseudo-Anosov.
\end{remark}

\subsection{Quasi-isometric embeddings for a shrinking map}

This paper also extends most of the preceding results to shrinking differentiable maps, beginning with the following definition.

\begin{definition}
Let $B$ be a hyperbolic surface.
A differentiable map $F:B\rightarrow \Moduli_h$ is called \emph{$\lambda_0$-shrinking} for some $\lambda_0 \ge 1$ if, 
for some (and hence every) lift $\widetilde{F}$ of $F$,
the inequality
\begin{equation*}
\Kob_{\Hyperbolic^2}(\widetilde{b},\widetilde{v})
\ge
\frac{1}{\lambda_0}
\Kob_{\Teich_h}(\widetilde{F}(\widetilde{b}),d\widetilde{F}(\widetilde{v}))
\end{equation*}
holds for every $\widetilde{b} \in \Hyperbolic^2$
and every tangent vector $\widetilde{v} \in T_{\widetilde{b}} \Hyperbolic^2$.
\end{definition}

A $\lambda_0$-shrinking differentiable map $F:B\rightarrow \Moduli_h$ satisfies the following distance inequalities for all $b_1,b_2 \in B$:
\begin{align*}
\frac{1}{2} d_B(b_1,b_2) &\ge
\frac{1}{\lambda_0} d_F(b_1,b_2) \ge
\frac{1}{\lambda_0} d_{\Moduli_h}(F(b_1),F(b_2)), \\
\frac{1}{2} d_{\Hyperbolic^2}(\widetilde{b_1},\widetilde{b_2}) &\ge
\frac{1}{\lambda_0} d_{\widetilde{F}}(\widetilde{b_1},\widetilde{b_2}) \ge
\frac{1}{\lambda_0} d_{\Teich_h}(\widetilde{F}(\widetilde{b_1}),\widetilde{F}(\widetilde{b_2})).
\end{align*}
Here, the map $\widetilde{F}$ is any lift of $F$ and $\widetilde{b_1}, \widetilde{b_2}$ are any lifts of $b_1, b_2$ to $\Hyperbolic^2$, respectively.
In particular, every holomorphic map $F:B\rightarrow \Moduli_h$ is a $1$-shrinking differentiable map.

For a differentiable map $F:B\rightarrow \Moduli_h$, a point $b\in B$ is a \emph{singular point} of $F$
if some (and hence every) lift $\widetilde{b}$ of $b$ is a singular point of $\widetilde{F}$.
This means that the differential $d \widetilde{F}_{\widetilde{b}}: T_{\widetilde{b}} \Hyperbolic^2 \rightarrow T_{\widetilde{F}(\widetilde{b})} \Teich_h$ has rank strictly smaller than $2$.
We denote the set of all singular points of $F$ by $\Sing(F) \subset B$.

Now, consider a non-constant holomorphic map.
In local coordinates,
the differential $d \widetilde{F}_{\widetilde{b}}$ is represented by a $\C$-valued $(3h-3)$-by-$1$ matrix.
Consequently, the (real) rank of the differential $d \widetilde{F}_{\widetilde{b}}$ is either $0$ or $2$.
It follows that a point $b$ is singular if and only if $d \widetilde{F}_{\widetilde{b}} = (0,\ldots,0)^t \in \C^{(3h-3) \times 1}$.
By the identity theorem in complex analysis, the singular set $\Sing(F)$ is discrete and has no accumulation point.

\begin{thmx}
\label{thmx::shrinking}
Let $F:B\rightarrow \Moduli_h$ be a $1$-shrinking differentiable map, where $B$ is a hyperbolic surface of type $(g,n)$ with $g\ge 2$, $n\ge 0$ and $h\ge 2$.

\begin{enumerate}[label=(\alph*).]
\item If every peripheral monodromy of $F$ has infinite order, then $F$ satisfies $\qii(F)$.

\item If every peripheral monodromy of $F$ has infinite order and
the singular set $\Sing(F)$ is discrete with no accumulation point,
then $F$ satisfies $\qiit(F)$.

\item The map $F$ satisfies $\qiet(F)$
if and only if
the monodromy representation
\begin{equation*}
F_*: (\pi_1(B,t),d_{\pi_1(B,t) \curvearrowright \Hyperbolic^2}) \rightarrow (\Mod_h, d_{\Mod_h \curvearrowright \Teich_h})
\end{equation*}
is a quasi-isometric embedding.
\end{enumerate}
\end{thmx}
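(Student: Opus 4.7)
The plan is to adapt the arguments of Theorems \ref{thmx::qii}, \ref{thmx::qiit} and \ref{thmx::qiet}, relying on the fact that the $1$-shrinking hypothesis is precisely the infinitesimal inequality $\Kob_{\Hyperbolic^2}(\widetilde{b},\widetilde{v}) \ge \Kob_{\Teich_h}(\widetilde{F}(\widetilde{b}),d\widetilde{F}(\widetilde{v}))$ that is the workhorse of those proofs in the holomorphic setting. As a consequence, the upper bounds $d_F(b_1,b_2) \le \frac{1}{2} d_B(b_1,b_2)$ and $d_{\widetilde{F}}(\widetilde{b_1},\widetilde{b_2}) \le \frac{1}{2} \dhyp(\widetilde{b_1},\widetilde{b_2})$ come for free in (a) and (b), so only the corresponding lower bounds must be established; in (c) only the lower bound on $\dtei{h}(\widetilde{F}(\widetilde{b_1}),\widetilde{F}(\widetilde{b_2}))$ in terms of $\dhyp(\widetilde{b_1},\widetilde{b_2})$ is genuinely new.

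For (a), I would follow the thick-thin strategy of Theorem \ref{thmx::qii}. On the compact thick part of $B$, the lower bound reduces to comparing two continuous pseudo-metrics on a compact space. On each cusp neighbourhood, the hypothesis that the peripheral monodromy has infinite order forces $F$ to escape every compact subset of $\Moduli_h$ as one approaches the cusp, and the resulting divergence of the Teichm\"uller metric supplies the required lower bound for $d_F$. This cusp analysis uses only the $1$-shrinking inequality and therefore transfers directly from the holomorphic case. For (b), the scheme is parallel but now on $\Hyperbolic^2$: the thick-thin picture lifts to the universal cover and the comparison is carried out equivariantly under the deck action of $\pi_1(B,t)$. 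The hypothesis that $\Sing(F)$ is discrete and without accumulation points, which in the holomorphic case was free from the identity theorem, ensures here that $\widetilde{F}^*\Kob_{\Teich_h}$ is a genuine continuous Finsler norm on a dense open subset of $\Hyperbolic^2$, and in particular prevents the intrinsic distance $d_{\widetilde{F}}$ from collapsing between lifts that are far apart in $\dhyp$.

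For (c), the forward direction comes by evaluating $\widetilde{F}$ on the orbit of a base point $t \in \Hyperbolic^2$ under the deck action: the equivariance $\widetilde{F}(g \cdot \widetilde{b}) = F_*(g) \cdot \widetilde{F}(\widetilde{b})$ converts the quasi-isometric embedding inequalities for $\widetilde{F}$ directly into those for $F_*$. For the converse, given $\widetilde{b_1}, \widetilde{b_2} \in \Hyperbolic^2$ choose $g_1,g_2 \in \pi_1(B,t)$ with $g_i \cdot t$ within one fundamental-domain diameter of $\widetilde{b_i}$. The $1$-shrinking property bounds each local error $\dtei{h}(\widetilde{F}(\widetilde{b_i}), F_*(g_i) \cdot \widetilde{F}(t))$ by $\frac{1}{2}\dhyp(\widetilde{b_i}, g_i \cdot t)$, while the assumed quasi-isometric embedding of $F_*$ lower-bounds $\dtei{h}(F_*(g_1)\cdot \widetilde{F}(t), F_*(g_2)\cdot \widetilde{F}(t))$ in terms of $d_{\pi_1(B,t)\curvearrowright\Hyperbolic^2}(g_1,g_2)$, which is in turn comparable to $\dhyp(\widetilde{b_1},\widetilde{b_2})$ up to a uniform additive error. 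A triangle inequality then yields the required estimate.

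The main obstacle I anticipate is (b). Unlike in the holomorphic case, $\qii(F)$ does not automatically promote to $\qiit(F)$: $d_{\widetilde{F}}$ could in principle collapse along certain directions in $\Hyperbolic^2$ even when the quotient pseudo-metric $d_F$ is well-behaved on $B$. The discreteness of $\Sing(F)$ serves as the substitute for the identity theorem, but converting it into a uniform quantitative non-degeneracy estimate for $\widetilde{F}^*\Kob_{\Teich_h}$ — one that is compatible with the deck action and interacts correctly with the cusp analysis — is the delicate step; in particular one must show that the punctual obstruction created by each isolated singular point cannot accumulate along $\pi_1(B,t)$-orbits so as to destroy the quasi-isometric lower bound on $d_{\widetilde{F}}$.
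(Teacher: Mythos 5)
Your overall strategy matches the paper's (thick--thin decomposition, cusp analysis driven by the infinite-order peripheral monodromy being a root of a multi-twist, orbit comparison for the monodromy in part (c)), and your observation that the cusp estimates transfer verbatim to $1$-shrinking maps is correct: the paper's Propositions~\ref{proposition::l_epsilon}, \ref{proposition::qie-cusp}, \ref{proposition::cusp-quasi-isom} and \ref{proposition::cusp-bi-lipschitz} are all stated and proved under the $1$-shrinking hypothesis. However, there are two places where you name the difficulty but do not supply the argument. For (b), you correctly identify that the singular set is the obstruction, but you leave open exactly the step the paper actually carries out: since $\Sing(F)$ is defined on $B$ (it is deck-invariant upstairs), its intersection with a compact part $B_{\text{cp}}(\boldsymbol{\epsilon})$ is \emph{finite}; one excises pairwise disjoint balls $U(p_i,r)$ around these finitely many points, obtains by compactness a uniform positive lower bound for the ratio $\Kob_{\Teich_h}(\widetilde{F}(\widetilde{b}),d\widetilde{F}(\widetilde{v}))/\Kob_{\Hyperbolic^2}(\widetilde{b},\widetilde{v})$ on the complement, and then performs a path surgery showing that any path either is uniformly short (contributing only an additive error) or spends at least half its length outside the excised balls (Proposition~\ref{proposition::bilipschitz-path-Bcp}). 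The proof of (b) then requires a careful decomposition of an arbitrary path into cusp excursions and thick-part segments, with the additive errors confined to a bounded number of pieces so that they do not accumulate along the path; none of this is in your sketch, and "prevents $d_{\widetilde{F}}$ from collapsing" is not yet a proof.

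The second gap is in the converse direction of (c). Your plan chooses $g_i \in \pi_1(B,t)$ with $g_i\cdot \widetilde{t}$ "within one fundamental-domain diameter" of $\widetilde{b_i}$, but when $n\ge 1$ (the case of interest throughout the paper) a fundamental domain for $\Gamma$ in $\Hyperbolic^2$ has infinite diameter: if $\widetilde{b_i}$ lies deep in a lift of a cusp region, there is no orbit point of $\widetilde{t}$ at bounded distance, and the triangle-inequality transfer fails. The paper must therefore split into four cases according to whether the endpoints lie in the thick part or in lifts of cusp regions. The genuinely new ingredients, absent from your proposal, are: (i) for an endpoint deep in a cusp, Proposition~\ref{proposition::l_epsilon} together with Wolpert's inequality lower-bounds $\dtei{h}(\widetilde{q_L},\widetilde{q_R})$ by (a multiple of) $\log(2/\epsilon_L)$, i.e., by the depth of the excursion; and (ii) when \emph{both} endpoints are deep in distinct lifts of cusp regions, one needs Lemma~\ref{lemma::peripherals_intersecting} --- the multi-curves supporting the two peripheral multi-twists must intersect, because otherwise their product would be a non-peripheral multi-twist with zero translation length, contradicting the quasi-isometric embedding of $F_*$ --- combined with the collar lemma to force one of the supporting curves to be long at the other endpoint. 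Without these steps your argument establishes the converse of (c) only for $n=0$.
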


\begin{remark}
Recall the implications $\qiet(F) \Rightarrow \qiit(F) \Rightarrow \qii(F)$.
Thus, the monodromy condition equivalent to $\qiet(F)$ in Theorem \ref{thmx::shrinking} (c) also implies that $F$ satisfies $\qiit(F)$.
However, the singular set $\Sing(F)$ may fail to satisfy the discreteness condition required in Theorem \ref{thmx::shrinking} (b).
\end{remark}

\begin{remark}
The $1$-shrinking assumption
in Theorem \ref{thmx::shrinking}
is technically necessary.
The first, though not the only, obstruction
to extending these results to $\lambda_0$-shrinking maps with $\lambda_0 > 1$ is that
our methods do not anymore yield a bi-Lipschitz relation between the height of a point $b$ in a hyperbolic cusp region and the length of a curve on $F(b)$ supporting the peripheral monodromy,
when that peripheral monodromy has infinite order; see Proposition~\ref{proposition::l_epsilon}.
\end{remark}

\begin{proof}[Proof of Theorem \ref{thmx::qiet}]
This follows immediately from Theorem \ref{thmx::shrinking} (c).
\end{proof}

\subsection*{Outline}

Section \ref{section::preliminary} compiles the preliminary results.
Subsection \ref{subsection::hyperbolic_geodesics} provides a tractable estimate for the length of a geodesic segment within a hyperbolic cusp region.
Subsection \ref{subsection::teichmuller distance} provides estimates for the Teichmüller distance between a point and its image under a multi-twist.
Subsection \ref{subsection::rigidities} proves of the implications $\qiet \Rightarrow \qiit \Rightarrow \qii$.
Subsection \ref{subsection::monodromy} presents an explicit definition of the monodromy.

In Section \ref{section::peripheral}, we study the restriction of a shrinking differentiable map $F: B \rightarrow \Moduli_h$ to a cusp region $U \subset B$.
When the peripheral monodromy has infinite order, it is a root of a multi-twist. Proposition \ref{proposition::l_epsilon} in Subsection \ref{subsection::peripheral_infinite_order} establishes a coarse relation between the height of $b \in U$ and the lengths of curves on $F(b)$ supporting this multi-twist;
this relation is bi-Lipschitz when $F$ is $1$-shrinking.
We then decompose a path $\beta \subset B$ into segments
that either lie entirely within a cusp region or entirely outside all cusp regions. The lengths of the images of the segments within a cusp region are analysed in Subsection \ref{subsection::peripheral_infinite_order}.

The analysis for paths in the compact region $B_{\text{cp}} \subset B$ relies on the hypothesis concerning the singular set $\Sing(F)$, as detailed in Subsection \ref{subsection::compact_region}.
Subsection \ref{subsection::qii} presents the proof of Theorem \ref{thmx::shrinking} (a) and a short proof of Theorem \ref{thmx::qii}.
Subsection \ref{subsection::qiit} contains the proofs of Theorem \ref{thmx::shrinking} (b) and Theorem \ref{thmx::qiit}.
Subsection \ref{subsection::qiet} is devoted to the proof of Theorem \ref{thmx::shrinking} (c)

\subsection*{Acknowledgements}

This work was initiated during the author’s PhD at the Institut Fourier in Grenoble and completed during a visit to SIMIS in Shanghai.
The author is grateful to his advisors, Louis Funar and Greg McShane, for their valuable insights and constructive comments. He also thanks Xiaolong Hans Han for discussions and hospitality, and Veronica Pasquarella for helpful conversations.
Special gratitude goes to Mengmeng Xu for her meticulous reading of the manuscript.

\tableofcontents

\section{Preliminary results}
\label{section::preliminary}

We fix non-negative integers $g$, $n$ and $h$ with $2g-2+n > 0$ and $h \ge 2$ once and for all.

\subsection{Hyperbolic geodesics}
\label{subsection::hyperbolic_geodesics}

Given a cusped hyperbolic surface $B$, a cusp region is a neighbourhood of a cusp point bounded by a horocycle.
It is called \emph{standard} if
it has area $2$ and is bounded by a horocycle
of length $2$.
The cusp region $U \subset B$ can be expressed as the disjoint union
\[
U=\bigsqcup\limits_{0<\epsilon\le  l_B(\partial U)} H_{U,\epsilon}
\]
where $H_{U,\epsilon}$ is the horocycle of length $\epsilon$.
We define the \emph{height function} 
$\epsilon_U:U\rightarrow \R_{>0}$
by the condition that
$b\in H_{U,\epsilon_{U}(b)}\subset U$.

Consider a path $\beta\subset U$ joining $b_L$ and $b_R$.
The \emph{rounded winding number} $w_\beta \in \Z_{\ge 0}$ of $\beta$ is the (unoriented) winding number of $\beta$ around the cusp, rounded to the nearest integer.
When both $b_L$ and $b_R$ lie on $\partial U$, the number $w_\beta$ equals the number of self-intersections of $\beta$.

\begin{lemma}
\label{lemma::hyperbolic_length_cusp}
There exist constants $0<T_1\le T_2$ such that
for every geodesic segment $\beta \subset U$ joining $b_L$ to $b_R$ with $l_U(\beta) \ge 5$, the following holds:
Let $\epsilon_L=\epsilon_U(b_L)$,
$\epsilon_R=\epsilon_U(b_R)$
and let $w_\beta$ be the rounded winding number of $\beta$. Then
\begin{equation*}
T_1 \cdot \max\left\{ 
\left| \log{\frac{\epsilon_L}{\epsilon_R}} \right|,
\log{(w_\beta \epsilon_L)},
\log{(w_\beta \epsilon_R)}
\right\}
\le l_U(\beta) \le
T_2 \cdot \max\left\{ 
\left| \log{\frac{\epsilon_L}{\epsilon_R}} \right|,
\log{(w_\beta \epsilon_L)},
\log{(w_\beta \epsilon_R)}
\right\}.
\end{equation*}
\end{lemma}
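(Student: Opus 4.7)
The plan is to work directly in the upper half plane model of $\Hyperbolic^2$. Identify the standard cusp region $U$ with the quotient $\{z \in \Hyperbolic^2 : \Image(z) \ge 1/2\}/\langle z \mapsto z+1 \rangle$; this gives area $2$ and boundary horocycle length $2$, and a point at height $y$ lies on the horocycle of length $1/y$, so the height function reads $\epsilon_U(b) = 1/\Image(b)$.

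First, I would lift $\beta$ to a geodesic segment $\widetilde\beta \subset \Hyperbolic^2$ with endpoints $\widetilde b_L = (x_L, 1/\epsilon_L)$ and $\widetilde b_R = (x_R, 1/\epsilon_R)$; the horizontal displacement $d := |x_L - x_R|$ is intrinsic to the lift and differs from the rounded winding number $w_\beta$ by at most $1/2$. The standard formula $\cosh(d_{\Hyperbolic^2}(z, w)) = 1 + |z - w|^2/(2\Image(z)\Image(w))$ then yields
\begin{equation*}
R \;:=\; \cosh\bigl(l_U(\beta)\bigr) \;=\; \frac{\epsilon_L \epsilon_R\, d^2}{2} + \frac{\epsilon_L}{2\epsilon_R} + \frac{\epsilon_R}{2\epsilon_L}.
\end{equation*}
The hypothesis $l_U(\beta) \ge 5$ makes $R$ large, so the asymptotic $l_U(\beta) = \log(2R) + O(R^{-2})$ is valid; in particular $l_U(\beta) = \log R + O(1)$.

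Since $R$ is a sum of three positive terms, $\log R$ equals the maximum of their logarithms up to an additive $\log 3$. Substituting $d = w_\beta + O(1)$ (with the convention $\log 0 = -\infty$), this maximum rewrites — up to an additive constant — as $\max\{|\log(\epsilon_L/\epsilon_R)|,\; \log(w_\beta\epsilon_L) + \log(w_\beta\epsilon_R)\}$. A short case analysis (WLOG $\epsilon_L \le \epsilon_R$, distinguishing the signs of $\log(w_\beta\epsilon_L)$ and $\log(w_\beta\epsilon_R)$, and treating separately the degenerate case $w_\beta = 0$, $d \le 1/2$) shows that the previous expression is bi-Lipschitz comparable, with multiplicative factor at most $2$, to the quantity
\begin{equation*}
M \;:=\; \max\bigl\{|\log(\epsilon_L/\epsilon_R)|,\; \log(w_\beta\epsilon_L),\; \log(w_\beta\epsilon_R)\bigr\}
\end{equation*}
appearing in the lemma. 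The assumption $l_U(\beta) \ge 5$ forces $M$ to be bounded away from zero, so the additive $O(1)$ can be absorbed into multiplicative constants, producing the desired $0 < T_1 \le T_2$.

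The main delicate step is the comparison of $\log R$ with $M$. The awkward regime is $w_\beta \epsilon_L < 1 \le w_\beta \epsilon_R$ and its boundary $w_\beta = 0$: here $\log(w_\beta\epsilon_L)$ is negative and the first summand of $R$ is effectively negligible, so one must verify that $\log(\epsilon_R/\epsilon_L) > \log(w_\beta\epsilon_R)$ in precisely this regime. That inequality is exactly $w_\beta\epsilon_L < 1$, so it holds automatically; consequently $|\log(\epsilon_L/\epsilon_R)|$ dominates both $M$ and $\log R$, and the bi-Lipschitz comparison persists throughout.
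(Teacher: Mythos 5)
Your proposal is correct. It works in the same setting as the paper (the upper half-plane model with the explicit formula $\cosh d_{\Hyperbolic^2}=1+|z-w|^2/(2\,\mathrm{Im}\,z\,\mathrm{Im}\,w)$), but the architecture is genuinely different: the paper derives the lower bound by comparing $\widetilde\beta$ with two auxiliary configurations (projection to a vertical line, and a reflection trick for the horizontal displacement) and the upper bound by a triangle inequality through the corner point, whereas you evaluate $\cosh(l_U(\beta))$ exactly as a sum of three positive terms and read off $l_U(\beta)=\log R+O(1)$ with $\log R$ equal, up to $\log 3$, to the maximum of the three logarithms. This buys a single unified computation in place of four separate estimates, at the cost of one extra comparison that the paper never needs: identifying $\max\{A,\,B+C\}$ with $\max\{A,B,C\}$ for $A=|\log(\epsilon_L/\epsilon_R)|$, $B=\log(w_\beta\epsilon_L)$, $C=\log(w_\beta\epsilon_R)$. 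You correctly isolate this as the delicate step, and the identity $|B-C|=A$ makes it go through in every sign regime (when $B<0$ one has $B+C\le A$ and $C\le A$ automatically, as you note). The remaining ingredients --- replacing the real horizontal displacement $d$ by $w_\beta$ up to bounded multiplicative error when $w_\beta\ge 1$, discarding the first summand of $R$ when $w_\beta=0$, and using $l_U(\beta)\ge 5$ to absorb additive constants into multiplicative ones --- are exactly the same bookkeeping the paper performs at the end of its proof.
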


\begin{remark}
When $\omega_{\beta} = 0$, the second and third terms inside the maximum evaluate to $-\infty$.
The maximum is therefore well-defined, as it is given by the first term, which is always positive.
\end{remark}

\begin{proof}
The hyperbolic distance (cf. \cite[Theorem 1.2.6]{Katok-1992}) between $x_1+iy_1$ and $x_2+iy_2$ is
\begin{equation*}
\dhyp(x_1+iy_1,x_2+iy_2) =
\arccosh \left(
1 + \frac{(x_2-x_1)^2+(y_2-y_1)^2}{2y_1y_2}
\right).
\end{equation*}
We represent $U = \{z=x+iy \in \Hyperbolic^2 \mid y \ge 1\} / (z \mapsto z+2)$.
Lift $\beta$ to $\widetilde{\beta} \subset \Hyperbolic^2$ joining $x_1+iy_1$ to $x_2+iy_2$,
where we assume that $x_1 \le x_2$ without loss of generality.
Define $\omega_\R = (x_2-x_1)/2$ and thus $\omega_\beta = \lfloor \omega_\R \rfloor$.
Define $\epsilon_L = 2/y_1$ and $\epsilon_R = 2/y_2$
and let $\Delta = \max\left\{ 
\left| \log{\frac{\epsilon_L}{\epsilon_R}} \right|,
\log{(w_\beta \epsilon_L)},
\log{(w_\beta \epsilon_R)}
\right\}$.

\textbf{Lower bound:}
Observe that
\begin{equation*}
l_U(\beta) =
\dhyp(x_1+iy_1,x_2+iy_2)
\ge
\dhyp(\R+iy_1,\R+iy_2)
=
\left| \log{\frac{y_1}{y_2}} \right|
=
\left| \log{\frac{\epsilon_L}{\epsilon_R}} \right|
\end{equation*}
and
\begin{align*}
l_U(\beta)
&=
\dhyp(x_1+iy_1,x_2+iy_2)
\ge
\dhyp(x_1+iy_1,x_2+i\R)
=
\frac{1}{2} \dhyp(x_1+iy_1,(2x_2-x_1)+iy_1)
\\
&=
\frac{1}{2} \arccosh
\left(
1 + \frac{(2x_2-2x_1)^2}{2y_1^2}
\right)
=
\frac{1}{2} \arccosh
\left(
1 + 2\cdot (\omega_\R \epsilon_L)^2
\right)
\\
&=
\frac{1}{2}
\log \left(
1 + 2 (\omega_\R \epsilon_L)^2
+
\sqrt{ \left( 1 + 2(\omega_\R\epsilon_L)^2 \right)^2 - 1 }
\right)
\ge
\frac{1}{2} \log (\omega_\R \epsilon_L)^2
= \log (\omega_\R \epsilon_L)
\end{align*}
and by symmetry, we get $l_U(\beta) \ge \log (\omega_\R\epsilon_R)$.

\textbf{Upper bound:}
We observe that
\begin{equation*}
l_U(\beta) \le
\dhyp(x_1+iy_1,x_1+iy_2)+\dhyp(x_1+iy_2,x_2+iy_2)
=
\left| \log \frac{\epsilon_L}{\epsilon_R} \right|
+
\frac{1}{2} \arccosh \left( 1 + \frac{1}{2} (\omega_\R \epsilon_R)^2 \right).
\end{equation*}
If $\left| \log \frac{\epsilon_L}{\epsilon_R} \right| \ge
\frac{1}{2} \arccosh \left( 1 + \frac{1}{2} (\omega_\R \epsilon_R)^2 \right)$,
then $l_U(\beta) \le 2 \left| \log \frac{\epsilon_L}{\epsilon_R} \right|$.
From now one, we assume that
$\left| \log \frac{\epsilon_L}{\epsilon_R} \right| \le
\frac{1}{2} \arccosh \left( 1 + \frac{1}{2} (\omega_\R \epsilon_R)^2 \right)$.
As $l_U(\beta) \ge 5$, we obtain that $5 \le \arccosh \left( 1 + \frac{1}{2} (\omega_\R \epsilon_R)^2 \right)$.
Therefore $\omega_\R \epsilon_R \ge \sqrt{2 (\cosh{5}-1)} \approx 12.1004 \ge 12 \ge 1$
and $\log (\omega_\R \epsilon_R) \ge 1$.
Hence, we get
\begin{align*}
\frac{1}{2} \arccosh \left( 1 + \frac{1}{2} (\omega_\R \epsilon_R)^2 \right)
&=
\frac{1}{2}
\log \left(
1 + \frac{1}{2}(\omega_\R\epsilon_R)^2 +
\sqrt{ \left(
1 + \frac{1}{2}(\omega_\R\epsilon_R)^2
\right)^2 - 1}
\right) \\
&\le
\frac{1}{2} \log{ \frac{3+\sqrt{5}}{2} (\omega_\R \epsilon_R)^2 }
=
\frac{1}{2} \log \frac{3+\sqrt{5}}{2} + \log (\omega_\R \epsilon_R)
\le 2 \log (\omega_\R \epsilon_R).
\end{align*}

\textbf{Conclusion:}
Combining these estimates gives:
\begin{equation*}
\max \left\{
\left| \log{\frac{\epsilon_L}{\epsilon_R}} \right|,
\log(\omega_\R \epsilon_L),
\log(\omega_\R \epsilon_R)
\right\} \le l_U(\beta)
\le
3 \cdot
\max \left\{
\left| \log{\frac{\epsilon_L}{\epsilon_R}} \right|,
\log(\omega_\R \epsilon_L),
\log(\omega_\R \epsilon_R)
\right\}.
\end{equation*}
The result follows by considering the cases $\omega_\beta \ge 1$ and $\omega_\beta = 0$ separately.

If $\omega_\beta \ge 1$, then $\omega_\beta \le \omega_\R \le 2\omega_\beta$.
Hence, we get
$\log (\omega_\R \epsilon_L) \ge \log (\omega_\beta \epsilon_L)$,
$\log (\omega_\R \epsilon_R) \ge \log (\omega_\beta \epsilon_R)$
and
$\log (\omega_\R \epsilon_L) \le \log (\omega_\beta \epsilon_L) + \log(2)$,
$\log (\omega_\R \epsilon_R) \le \log (\omega_\beta \epsilon_R) + \log(2)$.
Hence, $\Delta \le l_U(\beta) \le 3\Delta + 3 \log(2)$.
As $\Delta \le \log(2)$ would imply the contradiction $l_U(\beta) \le 6 \log{2} \approx 4.159 < 5$,
we obtain that $\Delta \ge \log{2}$ and $l_U(\beta) \le 6\Delta$.

If $\omega_{\beta}=0$, then $0 \le \omega_\R < 1$.
Thus $\Delta \le l_U(\beta) \le 3\cdot \max\{ \left| \log \frac{\epsilon_L}{\epsilon_R} \right|, \log (2) \} \le 3 \Delta + 3 \log(2)$.
Similarly, as $\Delta \le \log(2)$ would imply a contradiction, we obtain that $l_U(\beta) \le 6 \Delta$.
\end{proof}

\subsection{Teichm\"uller distance}
\label{subsection::teichmuller distance}

The Teichm\"uller space
$\Teich_h$ parameterizes all complex structures on a closed, oriented smooth
surface $\Sigma_h$ of genus $h\ge 2$, up to isotopy.
Elements in $\Teich_h$ are written as $[X,f_X]$, where $X$ is a Riemann surface and $f_X:\Sigma_h \rightarrow X$ is an orientation preserving diffeomorphism.
Two pairs $[X,f_X]$ and $[Y,f_Y]$ are equal if $f_Y \circ f_X^{-1} : X \rightarrow Y$ is isotopic to a biholomorphism.

The mapping class group $\Mod_h$ consists of all orientation-preserving diffeomorphisms of $\Sigma_h$, up to isotopy.
This group acts properly discontinuously on the Teichm\"uller space $\Teich_h$ via the action
$[\phi] \cdot [X,f_X] = [X, f_X \circ \phi^{-1}]$.
A \emph{multi-curve} $\boldsymbol{\alpha}=\{\alpha_1,\ldots,\alpha_m\}$ on $\Sigma_h$ is a collection of disjoint, non-isotopic simple closed curves.
A \emph{multi-twist} along $\boldsymbol{\alpha}$ is a product of Dehn twists $T=T_{\alpha_1}^{r_1}\circ \cdots \circ T_{\alpha_m}^{r_m}$,
where each $r_i \in \Z \setminus \{0\}$.

The \emph{Teichm\"uller distance} between two points $[X,f_X], [Y,f_Y]\in\Teich_{g,n}$ is given by
\begin{equation*}
\dtei{h}([X,f_X],[Y,f_Y])=\frac{1}{2} \log \inf_{\phi}\{K(\phi)\},
\end{equation*}
where the infimum is taken over all quasiconformal diffeomorphisms $\phi:X\rightarrow Y$ homotopic to $f_Y\circ f_X^{-1}$ and $K(\phi)\ge 1$ denotes the dilatation of $\phi$.

The \emph{geodesic length function} $L_\gamma([X,f_X])$ assigns to each closed curve $\gamma \subset \Sigma_h$ the length of the unique geodesic homotopic to $f_X(\gamma)$ on the hyperbolic representative $X$ of $[X,f_X] \in \Teich_h$.
The geodesic length
function is distorted by a factor of at most $\exp (2 \dtei{h})$, namely,
$L_\gamma(x) / L_\gamma(y) \le \exp (2 \dtei{h}(x,y) ) $,
due to Wolpert \cite{Wolpert-1979}.
The \emph{translation length} of a mapping class $\phi \in \Mod_h$ is defined by $\tau(\phi) = \inf_{x \in \Teich_h} \dtei{h}(x, \phi \cdot x).$

\begin{lemma}
\label{lemma::l_le_teich}
Let $\widetilde{q} \in \Teich_h$ and $T=T_{\alpha_1}^{r_1} \circ \cdots \circ T_{\alpha_m}^{r_m} \in \Mod_h$ be a multi-twist, with each $r_i \neq 0$.
Suppose that $d_{\Teich}(\widetilde{q}, T \cdot \widetilde{q}) \le N_2$, for some constant $N_2>0$.
Then
\begin{equation*}
L_{\alpha_i}(\widetilde{q}) \le C_1(N_2) \cdot \dtei{h}(\widetilde{q}, T \cdot \widetilde{q}),
\end{equation*}
where the constant $C_1(N_2)$ depends only on $N_2$.
\end{lemma}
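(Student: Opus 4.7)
The plan is to bound $L_{\alpha_i}(\widetilde{q})$ from above in terms of $\dtei{h}(\widetilde{q}, T \cdot \widetilde{q})$ by combining Wolpert's distortion inequality $L_\gamma(q')/L_\gamma(q) \le \exp(2\dtei{h}(q,q'))$ with an analysis of the annular cover of the Riemann surface representing $\widetilde{q}$ corresponding to $\alpha_i$. The key observation is that because the supporting curves $\alpha_1,\dots,\alpha_m$ are pairwise disjoint, the twists $T_{\alpha_j}^{r_j}$ with $j\ne i$ contribute neither to the geometric intersection with $\alpha_i$ nor to the action on the $\alpha_i$-annular cover, so the analysis effectively reduces to the case of a single Dehn twist $T_{\alpha_i}^{r_i}$.

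The first step is to show that the hypothesis $\dtei{h}(\widetilde{q}, T\widetilde{q})\le N_2$ already forces $L_{\alpha_i}(\widetilde{q}) \le K(N_2)$ for some constant depending only on $N_2$. Pick a simple closed curve $\beta$ with $i(\beta,\alpha_i)=1$, chosen from a Bers-short marking at $\widetilde{q}$ so that $L_\beta(\widetilde{q})$ is itself controlled in terms of $L_{\alpha_i}(\widetilde{q})$ and the topological type of $B$. Since the $\alpha_j$'s are pairwise disjoint one has $i(T^{-1}\beta,\alpha_i)=|r_i|\ge 1$, and the collar lemma bounds $L_{T^{-1}\beta}(\widetilde{q})$ below by a positive constant times $|r_i|\cdot \arcsinh(1/\sinh(L_{\alpha_i}(\widetilde{q})/2))$. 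Comparing with the Wolpert bound $L_{T^{-1}\beta}(\widetilde{q}) = L_\beta(T\widetilde{q}) \le \exp(2N_2)\, L_\beta(\widetilde{q})$ produces an inequality that fails unless $L_{\alpha_i}(\widetilde{q})$ is bounded in terms of $N_2$.

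The second step, carried out in the regime $L_{\alpha_i}(\widetilde{q}) \le K(N_2)$, upgrades the bound to the claimed linear inequality by passing to the annular cover $\widetilde{X}_{\alpha_i}$ of the Riemann surface underlying $\widetilde{q}$. On $\widetilde{X}_{\alpha_i}$, the multi-twist $T$ lifts to a pure $r_i$-fold Dehn twist, and the extremal quasi-conformal map from $\widetilde{q}$ to $T\widetilde{q}$ lifts to a quasi-conformal self-map of $\widetilde{X}_{\alpha_i}$ with dilatation at most $\exp(2\dtei{h}(\widetilde{q}, T\widetilde{q}))$. A standard modulus estimate for quasi-conformal self-homeomorphisms of an annulus realizing a Dehn twist then yields the bound $\dtei{h}(\widetilde{q}, T\widetilde{q}) \ge c \cdot L_{\alpha_i}(\widetilde{q})$ with $c=c(N_2)>0$, which is the desired conclusion.

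The main obstacle is splicing the thin-part and thick-part regimes so that the final constant $C_1(N_2)$ depends only on $N_2$: the annular-cover estimate is sharp in the thin regime where $L_{\alpha_i}$ is small, while for $L_{\alpha_i}$ bounded away from zero one must use the strict positivity of the Teichmüller displacement together with the uniform upper bound from Step 1 to conclude. Tracking the constants through the passage between the two regimes, in particular the exponential losses coming from Wolpert, is where the proof demands care.
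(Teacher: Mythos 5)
The paper does not actually prove this lemma---it is quoted verbatim from \cite[Proposition~3.11]{Zhang2024holomorphic}---so there is no in-paper argument to compare against; your sketch must therefore stand on its own, and as written it has two genuine problems.

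First, Step~1 is broken. The claim $\Int(T^{-1}\beta,\alpha_i)=|r_i|$ is false: since the supporting curves of $T$ are disjoint from $\alpha_i$, we have $T^{-1}\alpha_i=\alpha_i$ and hence $\Int(T^{-1}\beta,\alpha_i)=\Int(\beta,\alpha_i)=1$ regardless of $r_i$ (it is $\Int(T^{-1}\beta,\beta)$ that grows like $|r_i|\,\Int(\alpha_i,\beta)^2$). Moreover, even granting the intended collar-lemma lower bound $L_{T^{-1}\beta}(\widetilde q)\gtrsim |r_i|\arcsinh\bigl(1/\sinh(L_{\alpha_i}(\widetilde q)/2)\bigr)$, the logic runs backwards: the right-hand side is a \emph{decreasing} function of $L_{\alpha_i}(\widetilde q)$, so making $\alpha_i$ long only makes your inequality easier to satisfy; it cannot force an upper bound on $L_{\alpha_i}(\widetilde q)$ (at best it constrains $|r_i|$, since $L_\beta(\widetilde q)$ from a short marking also grows like $\log(1/L_{\alpha_i})$). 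The correct and immediate route to Step~1 is the lower bound of Matsuzaki already recorded in the proof of Lemma~\ref{lemma::teich_le_log(rl)}: $N_2\ge \dtei{h}(\widetilde q,T\widetilde q)\ge \tfrac14\log\bigl(1+((2|r_i|-1)L_{\alpha_i}(\widetilde q)/\pi)^2\bigr)$ gives $L_{\alpha_i}(\widetilde q)\le \pi\sqrt{e^{4N_2}-1}$.

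Second, Step~2 hides the entire difficulty inside the phrase ``a standard modulus estimate.'' The point of the lemma is the \emph{linear} lower bound $\dtei{h}(\widetilde q,T\widetilde q)\ge c(N_2)\,L_{\alpha_i}(\widetilde q)$ in the regime where $L_{\alpha_i}(\widetilde q)$ is small; the estimates the paper itself uses (Matsuzaki's, as in Lemma~\ref{lemma::teich_le_log(rl)}(a)) degenerate to a \emph{quadratic} bound $\dtei{h}\gtrsim L_{\alpha_i}^2$ there, which is strictly weaker and does not yield the conclusion. So the one estimate you would need to supply---that a $K$-quasiconformal map between the annular covers (note: a map between two annuli of comparable but different moduli $\pi/L_{\alpha_i}(\widetilde q)$ and $\pi/L_{\alpha_i}(T\widetilde q)$, not a self-map, with ``realizing the twist'' interpreted via relative twisting on the ideal boundary) forces $\log K\gtrsim |r_i|/M$ when $|r_i|/M$ is small---is precisely the nontrivial content, and it is asserted rather than proved. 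The conclusion and the annular-cover strategy are plausible (the flat-torus model $d_{\Teich_1}(iy,1+iy)\approx 1/(2y)$ supports the linear rate), but as submitted the argument does not establish the lemma.
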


\begin{proof}
This estimate is given by \cite[Proposition 3.11]{Zhang2024holomorphic}.
\end{proof}

\begin{lemma}
\label{lemma::teich_le_log(rl)}
Let $\widetilde{q} \in \Teich_h$ and 
consider a multi-twist $T=T_{\alpha_1}^{r_1} \circ \cdots \circ T_{\alpha_m}^{r_m} \in \Mod_h$, where each $r_i \neq 0$.
Define $\widetilde{q'} = T \cdot \widetilde{q}$
and suppose that for some constant $N_1 \ge 0$, we have
$L_{\alpha_i}(\widetilde{q}) \le N_1$ for every $i$.
Then the following estimates hold for the Teichmüller distance $\dtei{h}(\widetilde{q},\widetilde{q'})$:
\begin{enumerate}[label=(\alph*).]
\item If $\max_i \{ |r_i| \cdot L_{\alpha_i}(\widetilde{q}) \} \ge \pi^2$, then
\begin{equation*}
\dtei{h}(\widetilde{q},\widetilde{q'})
\ge
\frac{1}{4} \cdot \log \max_i \left\{ |r_i| \cdot L_{\alpha_i}(\widetilde{q}) \right\}.
\end{equation*}

\item There exists a constant $C_2(N_1)$, depending only on $N_1$, such that if
$\max_i \{ |r_i| \cdot L_{\alpha_i}(\widetilde{q}) \} \ge C_2(N_1)$, then
\begin{equation*}
\dtei{h}(\widetilde{q},\widetilde{q'})
\le
2 \cdot \log \max_i \left\{ |r_i| \cdot L_{\alpha_i}(\widetilde{q}) \right\}.
\end{equation*}
\end{enumerate}
\end{lemma}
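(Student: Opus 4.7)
My plan is to handle the two inequalities by different techniques.

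For the upper bound in (b), I would construct an explicit quasiconformal diffeomorphism $\phi$ in the mapping class of $T$ and estimate its maximal dilatation $K(\phi)$. Choose pairwise disjoint embedded standard hyperbolic collars $\mathcal{C}_i$ around the geodesic representatives of $\alpha_i$ on the hyperbolic surface underlying $\widetilde{q}$; the collar lemma gives each $\mathcal{C}_i$ a conformal modulus $M_i$ of order $\pi/L_{\alpha_i}(\widetilde{q})$, uniformly bounded below because $L_{\alpha_i}(\widetilde{q}) \le N_1$. Realize the $r_i$-th power of the Dehn twist on $\mathcal{C}_i$ by the affine shear $(s,t)\mapsto (s, t + r_i s/M_i)$ in flat annular coordinates; a direct singular-value computation yields $K_i \asymp (|r_i| L_{\alpha_i}(\widetilde{q})/\pi)^2$ once $|r_i| L_{\alpha_i}(\widetilde{q})$ is large. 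Gluing with the identity outside the collars produces $\phi$ with $K(\phi) = \max_i K_i$, hence
\[
\dtei{h}(\widetilde{q},\widetilde{q'}) \le \tfrac{1}{2} \log K(\phi) \le \log \max_i |r_i| L_{\alpha_i}(\widetilde{q}) + O_{N_1}(1).
\]
The threshold $C_2(N_1)$ is then chosen to absorb this additive constant into the prefactor $2$ in the statement.

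For the lower bound in (a), I would use Kerckhoff's extremal-length formula
\[
\dtei{h}(\widetilde{q},\widetilde{q'}) = \tfrac{1}{2} \log \sup_\gamma \frac{\operatorname{Ext}_{T^{-1}\gamma}(\widetilde{q})}{\operatorname{Ext}_\gamma(\widetilde{q})},
\]
tested against a simple closed curve $\gamma$ with $i(\gamma,\alpha_{i^\ast})=1$, where $i^\ast$ realizes the maximum of $|r_i| L_{\alpha_i}(\widetilde{q})$. Using the standard Maskit-type comparison $\operatorname{Ext}_{\alpha_{i^\ast}}(\widetilde{q}) \asymp L_{\alpha_{i^\ast}}(\widetilde{q})$ (valid because $L_{\alpha_{i^\ast}}(\widetilde{q}) \le N_1$) together with Minsky's annulus inequality $\operatorname{Ext}_\beta(\widetilde{q}) \cdot \operatorname{Ext}_\delta(\widetilde{q}) \ge i(\beta,\delta)^2$, applied with $\beta = T^{-1}\gamma$ and a transverse curve $\delta$ whose intersection with $T^{-1}\gamma$ grows linearly in $|r_{i^\ast}|$, one obtains a lower bound on $\operatorname{Ext}_{T^{-1}\gamma}(\widetilde{q})$ of order $|r_{i^\ast}|^2 L_{\alpha_{i^\ast}}(\widetilde{q})$. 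Bounding $\operatorname{Ext}_\gamma(\widetilde{q})$ from above in terms of topological data and taking logarithms produces the required lower bound $\tfrac{1}{4} \log(|r_{i^\ast}| L_{\alpha_{i^\ast}}(\widetilde{q}))$; the threshold $\pi^2$ is precisely where the additive error terms become negligible against the prefactor $1/4$.

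The main obstacle I expect is controlling $\operatorname{Ext}_\gamma(\widetilde{q})$ (or equivalently $L_\gamma(\widetilde{q})$ for a Wolpert-type bound) when the transverse systole near $\alpha_{i^\ast}$ is uncontrolled: a naive transverse test curve can be very long on $\widetilde{q}$, making the denominator in Kerckhoff's formula too large. Routing through Minsky's annulus inequality circumvents this, since it produces a lower bound on the extremal-length \emph{ratio} by pairing $\beta = T^{-1}\gamma$ against a \emph{short} curve $\delta$ rather than comparing to $\gamma$ itself. The collar computation for (b) is by contrast routine; the only subtlety is taking $C_2(N_1)$ large enough so that the sub-asymptotic regime of the affine-shear dilatation lies below the $(|r_i| L_{\alpha_i})^2$ approximation.
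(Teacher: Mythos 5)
The paper does not prove these estimates from scratch: it quotes the explicit two-sided bounds of Matsuzaki (Theorem~1 of \cite{Matsuzaki2003}), namely
\begin{equation*}
\dtei{h}(\widetilde{q},\widetilde{q'}) \ge \tfrac12\log\Bigl(\bigl(\tfrac{(2|r_i|-1)L_{\alpha_i}(\widetilde{q})}{\pi}\bigr)^2+1\Bigr)^{1/2},
\qquad
\dtei{h}(\widetilde{q},\widetilde{q'}) \le \tfrac12\log\Bigl(\bigl(\bigl(\tfrac{|r_i|L_{\alpha_i}(\widetilde{q})}{2\theta_i}\bigr)^2+1\bigr)^{1/2}+\tfrac{|r_i|L_{\alpha_i}(\widetilde{q})}{2\theta_i}\Bigr)^2,
\end{equation*}
and then performs elementary manipulations of the thresholds. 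Your plan instead reproves both bounds. The upper-bound half of your plan (affine shear of dilatation $\asymp (|r_i|/M_i)^2$ in collars of modulus $M_i\asymp \theta_i/L_{\alpha_i}$, with $\theta_i$ bounded below via $L_{\alpha_i}\le N_1$, glued to the identity) is sound and is essentially the content behind Matsuzaki's upper bound; choosing $C_2(N_1)$ to absorb the additive constant matches what the paper does.

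The lower-bound half has a genuine gap. After Kerckhoff's formula and Minsky's inequality you arrive at
$e^{2\dtei{h}} \ge \mathrm{Ext}_{T^{-1}\gamma}(\widetilde{q})/\mathrm{Ext}_\gamma(\widetilde{q}) \ge i(T^{-1}\gamma,\delta)^2/(\mathrm{Ext}_\delta(\widetilde{q})\,\mathrm{Ext}_\gamma(\widetilde{q}))$, and for $i(T^{-1}\gamma,\delta)$ to grow linearly in $|r_{i^\ast}|$ the curve $\delta$ must itself cross $\alpha_{i^\ast}$, hence $\mathrm{Ext}_\delta(\widetilde{q})\gtrsim 1/L_{\alpha_{i^\ast}}(\widetilde{q})$ --- $\delta$ is \emph{not} short when $\alpha_{i^\ast}$ is. So Minsky's inequality does not circumvent the obstacle you name: the denominator $\mathrm{Ext}_\gamma(\widetilde{q})$ (equivalently, the existence of a transversal of extremal length $O(1/L_{\alpha_{i^\ast}})$) still has to be bounded from above, and this cannot be done ``in terms of topological data'' alone, since it depends on the geometry of $\widetilde{q}$ away from the $\alpha_i$ (other thin parts that a transversal is forced to interact with). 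Closing this requires an actual geometric input --- e.g.\ Minsky's product-regions theorem, or better, passing to the annular cover of $\alpha_{i^\ast}$, where the twist acts on the Teichm\"uller space $\Hyperbolic$ of the annulus by $z\mapsto z+r_{i^\ast}$ and the distance estimate $\gtrsim \log(|r_{i^\ast}|L_{\alpha_{i^\ast}})$ falls out cleanly; this is in effect what Matsuzaki's proof does. A secondary point: because your additive errors depend on $N_1$ and on the surface, your route can only deliver part (a) above some threshold $C(N_1,h)$, not the universal threshold $\pi^2$ stated in the lemma (which the paper gets for free from the $N_1$-independent form of Matsuzaki's lower bound). This would still suffice for the applications in the paper, but it is not the statement as written.
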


\begin{proof}
By Theorem 1 of \cite{Matsuzaki2003}, we obtain the following estimates:
\begin{align*}
\dtei{h}(\widetilde{q}, \widetilde{q'}) \ge
&
\frac{1}{2} \cdot \log \max_i
\left(
\left(
\frac{(2 |r_i| - 1) \cdot L_{\alpha_i}(\widetilde{q})}{\pi}
\right)^2
+ 1
\right)^{1/2},
\\
\dtei{h}(\widetilde{q}, \widetilde{q'}) \le
&
\frac{1}{2} \cdot \log \max_i
\left(
\left(
\left(
\frac{|r_i| \cdot L_{\alpha_i}(\widetilde{q})}{2 \theta_i}
\right)^2
+ 1
\right)^{1/2}
+
\frac{|r_i| \cdot L_{\alpha_i}(\widetilde{q})}{2 \theta_i}
\right)^2,
\end{align*}
where
$\theta_i \coloneqq \pi - 2 \arctan\left( \sinh \left( \frac{L_{\alpha_i}(\widetilde{q})}{2} \right) \right)$.

\textbf{Lower bounds:}
Since $|r_i| \ge 1$ for all $i$, we have $2|r_i| - 1 \ge |r_i|$. This implies
\begin{equation*}
\dtei{h}(\widetilde{q}, \widetilde{q'})
\ge
\frac{1}{2} \cdot \frac{1}{2} \cdot 2 \cdot
\log \max_i
\frac{(2 |r_i| - 1) \cdot L_{\alpha_i}(\widetilde{q})}{\pi}
\ge
\frac{1}{2} \cdot
\log \max_i
\frac{|r_i| \cdot L_{\alpha_i}(\widetilde{q})}{\pi}.
\end{equation*}
If $\max_i \{ |r_i| \cdot L_{\alpha_i}(\widetilde{q}) \} \ge \pi^2$, then
\begin{equation*}
\dtei{h}(\widetilde{q}, \widetilde{q'})
\ge
\frac{1}{2} \cdot
\log\left(
\sqrt{
\max_i
\{|r_i| \cdot L_{\alpha_i}(\widetilde{q})\}
}
\cdot
\sqrt{
\frac{
\max_i
\{|r_i| \cdot L_{\alpha_i}(\widetilde{q})\}
}{\pi^2}
}
\right)
\ge
\frac{1}{4}
\log \max_i
\{
|r_i| \cdot L_{\alpha_i}(\widetilde{q})
\}.
\end{equation*}

\textbf{Upper bound:}
Define the constant
\begin{equation*}
C_2(N_1) \coloneqq
\max\left\{
2 \sqrt{2} \left(
\pi - 2 \arctan \left(
\sinh \frac{N_1}{2}
\right)
\right),
\left( \sqrt{2} + 1 \right)
\cdot
\frac{1}{2 \sqrt{2} \left(
\pi - 2 \arctan \left(
\sinh \frac{N_1}{2}
\right)
\right)}
\right\}.
\end{equation*}
Assume that $\max_i \left\{ |r_i| \cdot L_{\alpha_i}(\widetilde{q}) \right\} \ge C_2(N_1)$ and thus
\begin{equation*}
\left(
\frac{ \max_i \left\{ |r_i| \cdot L_{\alpha_i}(\widetilde{q}) \right\} }{ 2\left( \pi - 2 \arctan \left( \sinh \frac{N'_1}{2} \right) \right) }
\right)^2 
\ge
\left(
\frac{ \max_i \left\{ |r_i| \cdot L_{\alpha_i}(\widetilde{q}) \right\} }{ \frac{C_2(N_1)}{\sqrt{2}} }
\right)^2 
\ge 2.
\end{equation*}
Since $\theta_i \ge \pi - 2 \arctan\left( \sinh \frac{N_1}{2} \right)$, we have
\begin{align*}
\dtei{h}(\widetilde{q},\widetilde{q'})
& \le
\log \left(
\left(
\left(
\frac{ \max_i \left\{ |r_i| \cdot L_{\alpha_i}(\widetilde{q}) \right\} }{ 2\left( \pi - 2 \arctan \left( \sinh \frac{N_1}{2} \right) \right) }
\right)^2
+ 1
\right)^{1/2}
+
\frac{ \max_i \left\{ |r_i| \cdot L_{\alpha_i}(\widetilde{q}) \right\} }{ 2\left( \pi - 2 \arctan \left( \sinh \frac{N_1}{2} \right) \right) }
\right) \\
& \le
\log \left(
\left(\sqrt{2} + 1 \right)
\cdot
\frac{1}{2\left( \pi - 2 \arctan \left( \sinh \frac{N_1}{2} \right) \right)}
\cdot
\max_i \left\{ |r_i| \cdot L_{\alpha_i}(\widetilde{q}) \right\}
\right) \\
& \le
\log \left(
C_2(N_1) \cdot
\max_i
\{
|r_i| \cdot L_{\alpha_i}(\widetilde{q})
\}
\right)
\le 2 \cdot
\log
\max_i
\{
|r_i| \cdot L_{\alpha_i}(\widetilde{q})
\}.
\end{align*}
\end{proof}

\subsection{Relations between quasi-isometric properties}
\label{subsection::rigidities}

\begin{lemma}
\label{lemma::rigidities-implications}
Let $F:B\rightarrow \Moduli_h$ be a $\lambda_0$-shrinking differentiable map.
Then the following implications hold:
$\qiet(F) \Rightarrow \qiit(F) \Rightarrow \qii(F)$.
\end{lemma}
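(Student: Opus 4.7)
The plan is to observe that the $\lambda_0$-shrinking hypothesis automatically supplies the upper bounds in all three quasi-isometric estimates, so only the lower bounds need to be transferred between the properties. Concretely, integrating the shrinking inequality along a hyperbolic geodesic gives
\begin{equation*}
d_{\widetilde{F}}(\widetilde{b_1},\widetilde{b_2}) \le \frac{\lambda_0}{2}\,\dhyp(\widetilde{b_1},\widetilde{b_2})
\end{equation*}
for all $\widetilde{b_1},\widetilde{b_2} \in \Hyperbolic^2$, and this passes to the quotient as $d_F(b_1,b_2) \le (\lambda_0/2)\,d_B(b_1,b_2)$. Hence, for each of the three properties, the upper half of the quasi-isometric inequality is automatic (with constant $\max(1,\lambda_0/2)$ and zero additive error), and only the lower half is substantive.

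For $\qiet(F) \Rightarrow \qiit(F)$, the key ingredient I would use is the comparison
\begin{equation*}
d_{\widetilde{F}}(\widetilde{b_1}, \widetilde{b_2}) \ge \dtei{h}(\widetilde{F}(\widetilde{b_1}), \widetilde{F}(\widetilde{b_2}))
\end{equation*}
for all $\widetilde{b_1},\widetilde{b_2} \in \Hyperbolic^2$. This holds because, by definition, the $\widetilde{F}^* \Kob_{\Teich_h}$-length of any smooth path $\gamma$ in $\Hyperbolic^2$ equals the $\Kob_{\Teich_h}$-length of $\widetilde{F} \circ \gamma$ in $\Teich_h$; taking the infimum over paths from $\widetilde{b_1}$ to $\widetilde{b_2}$ on the left produces $d_{\widetilde{F}}(\widetilde{b_1},\widetilde{b_2})$, while the right-hand side is bounded below by $d_{\Teich_h,\Kob}(\widetilde{F}(\widetilde{b_1}), \widetilde{F}(\widetilde{b_2})) = \dtei{h}(\widetilde{F}(\widetilde{b_1}), \widetilde{F}(\widetilde{b_2}))$ since the Kobayashi distance on $\Teich_h$ coincides with the Teichmüller distance and is the length metric induced by $\Kob_{\Teich_h}$. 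Substituting the lower bound from $\qiet(F)$ into this comparison immediately yields the lower bound required for $\qiit(F)$.

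For $\qiit(F) \Rightarrow \qii(F)$, I would descend to the quotient. Since $\widetilde{F}$ is equivariant with respect to the monodromy $F_*$ and $\Kob_{\Teich_h}$ is $\Mod_h$-invariant, the pullback pseudonorm $\widetilde{F}^*\Kob_{\Teich_h}$ is invariant under the deck group action of $\pi_1(B,t)$ on $\Hyperbolic^2$; consequently $d_{\widetilde{F}}$ is $\pi_1(B,t)$-invariant and $d_F(b_1,b_2) = \inf_{\gamma \in \pi_1(B,t)} d_{\widetilde{F}}(\widetilde{b_1}, \gamma \cdot \widetilde{b_2})$ for any fixed lifts $\widetilde{b_1}, \widetilde{b_2}$. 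Applying the $\qiit(F)$ inequality to each pair $(\widetilde{b_1}, \gamma \cdot \widetilde{b_2})$ and using the trivial estimate $\dhyp(\widetilde{b_1}, \gamma \cdot \widetilde{b_2}) \ge d_B(b_1,b_2)$, then taking the infimum over $\gamma$, delivers $d_F(b_1,b_2) \ge (1/\lambda)\,d_B(b_1,b_2) - \epsilon$, as required.

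The argument is essentially bookkeeping; I do not anticipate a substantive obstacle. The only points to verify carefully are the $\pi_1$-invariance of $d_{\widetilde{F}}$ (used in the descent argument) and the length-metric comparison $d_{\widetilde{F}} \ge \dtei{h} \circ (\widetilde{F} \times \widetilde{F})$, both of which follow from the invariance of $\Kob_{\Teich_h}$ under the mapping-class-group action together with the fact that $\dtei{h}$ is the path-length metric generated by $\Kob_{\Teich_h}$.
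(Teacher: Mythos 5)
Your proposal is correct and follows essentially the same route as the paper: the implication $\qiet(F)\Rightarrow\qiit(F)$ via the comparison $d_{\widetilde{F}}(\cdot,\cdot)\ge \dtei{h}(\widetilde{F}(\cdot),\widetilde{F}(\cdot))$, and $\qiit(F)\Rightarrow\qii(F)$ by descending to the quotient using the $\pi_1(B,t)$-invariance of $d_{\widetilde{F}}$ together with $\dhyp(\widetilde{b_1},\gamma\cdot\widetilde{b_2})\ge d_B(b_1,b_2)$. Your handling of the second step via the infimum over deck transformations is a slightly more careful phrasing of the paper's claim that the infimum is realised by a suitable choice of lifts, but the argument is the same.
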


\begin{proof}
Suppose that $F$ satisfies $\qiet(F)$, namely, there exists $(\lambda,\epsilon)$ such that
\begin{equation*}
\frac{\lambda_0}{2} \dhyp(\widetilde{b_1},\widetilde{b_2})
\ge
\dtei{h}(\widetilde{F}(\widetilde{b_1}),\widetilde{F}(\widetilde{b_2}))
\ge \frac{1}{\lambda} \cdot \dhyp(\widetilde{b_1},\widetilde{b_2})
- \epsilon
\end{equation*}
for some (and hence every) lift $\widetilde{F}$ of $F$, for all $\widetilde{b_1},\widetilde{b_2} \in \Hyperbolic^2$.
As $d_{\widetilde{F}}(\cdot,\cdot) \ge \dtei{h}(\widetilde{F}(\cdot),\widetilde{F}(\cdot))$, we get
\begin{equation*}
\frac{\lambda_0}{2} \dhyp(\widetilde{b_1},\widetilde{b_2})
\ge
d_{\widetilde{F}}(\widetilde{b_1},\widetilde{b_2})
\ge
\dtei{h}(\widetilde{F}(\widetilde{b_1}),\widetilde{F}(\widetilde{b_2}))
\ge \frac{1}{\lambda} \cdot \dhyp(\widetilde{b_1},\widetilde{b_2})
- \epsilon
\end{equation*}
and hence $F$ satisfies $\qiit(F)$.

Suppose that $F$ satisfies $\qiit(F)$, namely, there exists $(\lambda,\epsilon)$ such that
\begin{equation*}
\frac{\lambda_0}{2} \dhyp(\widetilde{b_1},\widetilde{b_2})
\ge
d_{\widetilde{F}}(\widetilde{b_1},\widetilde{b_2})
\ge \frac{1}{\lambda} \cdot \dhyp(\widetilde{b_1},\widetilde{b_2})
- \epsilon.
\end{equation*}
Consider $b_1, b_2 \in B$.
There exist $\widetilde{b_1}, \widetilde{b_2} \in \Hyperbolic^2$, which are lifts of $b_1, b_2$, respectively, such that $d_F(b_1,b_2) = d_{\widetilde{F}}(\widetilde{b_1},\widetilde{b_2})$.
Thus, we get
\begin{equation*}
\frac{\lambda_0}{2} d_B(b_1,b_2)
\ge
d_F(b_1,b_2)
=
d_{\widetilde{F}}(\widetilde{b_1},\widetilde{b_2})
\ge
\frac{1}{\lambda} d_{\Hyperbolic^2}(\widetilde{b_1},\widetilde{b_2}) - \epsilon
\ge
\frac{1}{\lambda} d_B(b_1,b_2) - \epsilon
\end{equation*}
and hence $F$ satisfies $\qii(F)$.
\end{proof}

\subsection{Monodromy representations}
\label{subsection::monodromy}

Consider a differentiable map $F:B\rightarrow\mathcal{M}_h$
and the lift $\widetilde{F}:\Hyperbolic^2\rightarrow \Teich_h$.
We obtain the induced \emph{monodromy representation} $F_*$ as follows.

Suppose that $B=\Gamma \backslash \Hyperbolic^2$ where $\Gamma\le \Aut(\Hyperbolic^2)$ is a lattice.
The map $\widetilde{F}:\Hyperbolic^2\rightarrow\Teich_h$ induces a group homomorphism $F_{\Gamma}:\Gamma\rightarrow \Mod_h$ such that
$\widetilde{F}\circ \phi = F_{\Gamma}(\phi)\circ \widetilde{F}$,
for every $\phi\in\Gamma$.
The homomorphism $F_{\Gamma}$ is not necessarily unique
when $F(t)$ has non-identity automorphisms (i.e. $F(t)$ is symmetric), for some $t\in B$.
Fixing a base point $t\in B$ and lifting it to some $\widetilde{t}\in\Hyperbolic^2$, we obtain a group isomorphism $\rho_{t,\widetilde{t}}:\pi_1(B,t)\rightarrow \Gamma$
such that every loop $\gamma\subset B$ based at $t$ is lifted to the path in $\Hyperbolic^2$ joining $\widetilde{t}$ to $\rho_{t,\widetilde{t}}([\gamma]) \cdot \widetilde{t}$.

We call $F_*\coloneqq F_{\Gamma}\circ \rho_{t,\widetilde{t}} \in \Hom(\pi_1(B,t), \Mod_h)$
a \emph{monodromy representation} of $F$ with the lift $\widetilde{F}$.
Different choices of $F_{\Gamma}$ and $\rho_{t,\widetilde{t}}$ change the monodromy representation by conjugacies.
The image $F_*([\gamma])$ along a based loop $\gamma \subset B$ that goes once or several times around a cusp, clockwise or counterclockwise, is called a \emph{peripheral monodromy} of
the cusp.

In general, we consider a representation $\rho:\pi_1(B,t) \rightarrow \Mod_h$ of the surface group in $\Mod_h$.

\begin{lemma}
\label{lemma::translation_length}
Suppose that the representation
$\rho:(\pi_1(B,t),d_{\pi_1(B,t) \curvearrowright \Hyperbolic^2}) \rightarrow (\Mod_h, d_{\Mod_h \curvearrowright \Teich_h})$ 
is a quasi-isometric embedding.
Then, we have the following for the image.

\begin{enumerate}[label=(\alph*).] 
\item For every non-trivial, non-peripheral element $[\gamma] \in \pi_1(B,t)$, the image $\rho([\gamma]) \in \Mod_h$ has positive translation length.

\item For every peripheral element $[\gamma] \in \pi_1(B,t)$, the image $\rho([\gamma]) \in \Mod_h$ has zero translation length and infinite order. In this case,
there exists $\mu \in \Z_{\ge 1}$ such that $\phi^\mu$ is a multi-twist.
\end{enumerate}
\end{lemma}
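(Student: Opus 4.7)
The plan is to transport asymptotic orbit-growth rates from $\Hyperbolic^2$ to $\Teich_h$ via the quasi-isometric embedding $\rho$, and then read off the Nielsen--Thurston structure of the image. Under the isomorphism $\rho_{t,\widetilde{t}}: \pi_1(B,t) \to \Gamma$, a non-trivial element $[\gamma]$ corresponds either to a hyperbolic deck transformation $\phi \in \Gamma$ (when $[\gamma]$ is non-peripheral), in which case $d_{\Hyperbolic^2}(\phi^n \widetilde{t}, \widetilde{t}) = n \ell + O(1)$ for the positive hyperbolic translation length $\ell$, or to a parabolic $\phi \in \Gamma$ (when $[\gamma]$ is peripheral), in which case $d_{\Hyperbolic^2}(\phi^n \widetilde{t}, \widetilde{t}) = 2 \log |n| + O(1)$.

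For part (a), the lower bound of the quasi-isometric embedding applied to the pair $([\gamma]^n, 1)$ gives
\[
d_{\Teich_h}\bigl(\rho([\gamma])^n \cdot x,\ x\bigr) \ \ge\ \frac{n \ell}{\lambda} - C
\]
for a fixed basepoint $x \in \Teich_h$ and a uniform constant $C$. Dividing by $n$ and letting $n \to \infty$ shows that the stable translation length satisfies $\tau_\infty(\rho([\gamma])) \ge \ell / \lambda > 0$. Since $d(g^n y, y) \le n \cdot d(g y, y)$ for every $y$ forces $\tau_\infty \le \tau$ for any isometric action, one obtains $\tau(\rho([\gamma])) > 0$.

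For part (b), the two-sided quasi-isometric embedding yields
\[
\frac{2 \log n}{\lambda} - C_1 \ \le\ d_{\Teich_h}\bigl(\rho([\gamma])^n \cdot x,\ x\bigr) \ \le\ 2 \lambda \log n + C_2
\]
for all sufficiently large $n$. The left-hand side diverges, so $\rho([\gamma])$ has infinite order, and the sub-linear upper bound gives $\tau_\infty(\rho([\gamma])) = 0$. I would then invoke the Nielsen--Thurston classification together with Bers' translation-length theorem: if $\rho([\gamma])$ contained a pseudo-Anosov component of dilatation $\lambda^\ast > 1$, Bers' result would force $d_{\Teich_h}(\rho([\gamma])^n x, x) \ge n \log \lambda^\ast - O(1)$, contradicting the logarithmic upper bound. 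Hence $\rho([\gamma])$ has no pseudo-Anosov piece; combined with infinite order, its canonical reduction system is non-empty and some power $\rho([\gamma])^\mu$ is a non-trivial multi-twist $T_{\alpha_1}^{r_1} \circ \cdots \circ T_{\alpha_m}^{r_m}$ with each $r_i \neq 0$. The vanishing $\tau(\rho([\gamma])) = 0$ then follows from Bers' formula, which identifies the translation length with $\log \lambda_{\max}$ over the pseudo-Anosov pieces (here $\lambda_{\max} = 1$).

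The main obstacle will be the last step of part (b): converting ``logarithmic orbit growth plus infinite order'' into the algebraic statement ``some $\rho([\gamma])^\mu$ is a multi-twist with non-zero exponents.'' This demands careful use of the Nielsen--Thurston canonical reduction machinery---in particular, to handle the case where $\rho([\gamma])$ permutes the reducing curves non-trivially---along with the correct invocation of Bers' translation-length theorem to rule out pseudo-Anosov components.
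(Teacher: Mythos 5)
Your proposal is correct and follows the same underlying strategy as the paper: compare the orbit growth in $\Hyperbolic^2$ (linear for hyperbolic deck transformations, $2\log|n|+O(1)$ for parabolics) with the growth in $\Teich_h$ forced by the two-sided quasi-isometric embedding, then appeal to the Bers/Nielsen--Thurston classification. The one genuine difference is in part (a): you argue directly that the stable translation length satisfies $\tau_\infty(\rho([\gamma]))\ge \ell/\lambda>0$ and use $\tau\ge\tau_\infty$, whereas the paper argues by contradiction, assuming $\tau=0$ and invoking the classification together with the logarithmic distance estimates for multi-twist orbits (its Lemma on $\dtei{h}(\widetilde q, T^k\widetilde q)\asymp\log k$); your route is slightly cleaner since it bypasses those estimates entirely. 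For part (b) the two arguments coincide in substance (boundedness rules out finite order, linear growth rules out a pseudo-Anosov piece, and Bers' parabolic case gives $\tau=0$ with some power a multi-twist); only your parenthetical claim that the translation length \emph{equals} $\log\lambda_{\max}$ over the pseudo-Anosov pieces is stronger than what is known or needed --- the special case actually used, namely that an infinite-order class with no pseudo-Anosov piece is parabolic in Bers' sense and hence has vanishing (unattained) translation length, is exactly what the paper cites.
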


\begin{proof}
(a) Assume that $[\gamma] \in \pi_1(B,t)$ is a non-trivial, non-peripheral element such that $\phi \coloneqq \rho([\gamma])$ has zero translation length.
Recall the classification of mapping classes due to Bers \cite{Bers1978},
the translation length of $\phi$ is attained if and only if $\phi$ has finite order $\mu \ge 1$; otherwise, there exists $\mu \in \Z_{\ge 1}$ such that $\phi^\mu = \prod_i T_{\alpha_i}^{r_i}$ is a multi-twist.

Suppose that $[\gamma]$ is represented by a closed geodesic $\beta \subset B$.
Let $b \in \beta$ be arbitrary.
Consider the axis $\widetilde{\beta}$ of the hyperbolic isometry $[\gamma]$, which is a lift of $\beta$.
Take some lift $\widetilde{b} \in \widetilde{\beta}$ of $b$.
Consider the metric $d_{\pi_1(B,t) \curvearrowright \Hyperbolic^2}$
using the orbit $\pi_1(B,t) \cdot \widetilde{b}$.
We also fix a point $\widetilde{q} \in \Teich_h$
and consider the metric $d_{\Mod_h \curvearrowright \Teich_h}$
using the orbit $\Mod_h \cdot \widetilde{q}$.
For every $k \in \Z_{\ge 1}$,
take $\widetilde{b_k} = [\gamma]^{k\cdot \mu} \cdot \widetilde{b}$ and
$\widetilde{q_k} = \phi^{k\cdot \mu}\cdot \widetilde{q}$.
We get
\begin{equation*}
d_{\pi_1(B,t) \curvearrowright \Hyperbolic^2}(1, [r]^{k \cdot \mu})
=
\dhyp(\widetilde{b}, \widetilde{b_k}) = k\cdot \mu \cdot l_B(\beta)
\end{equation*}
and
\begin{equation*}
d_{\Mod_h \curvearrowright \Teich_h}(1, \phi^{k \cdot \mu})
=
\dtei{h}(\widetilde{q}, \widetilde{q_k})
=
\dtei{h}(\widetilde{q}, (\phi^\mu)^k \cdot \widetilde{q}).
\end{equation*}
If $\phi$ has finite order $\mu$, then $\dtei{h}(\widetilde{q},\widetilde{q_k}) = 0$.
If $\phi$ has infinite order, then
$\dtei{h}(\widetilde{q},\widetilde{q_k}) =
\dtei{h}(\widetilde{q}, \prod_i T_{\alpha}^{k\cdot r_i} \cdot \widetilde{q})$
which,
by Lemma \ref{lemma::teich_le_log(rl)},
is asymptotically approximated by $\log(k)$.
In both cases, the representation $\rho$ cannot be a quasi-isometric embedding, which is a contradiction.

(b) Let $[\gamma] \in \pi_1(B,t)$ be the peripheral element around some cusp region $U \subset B$.
Taking $b \in \partial U$ and some lift $\widetilde{b}\in \Hyperbolic^2$ of $b$,
we consider the metric $d_{\pi_1(B,t) \curvearrowright \Hyperbolic^2}$ using the orbit $\pi_1(B,t) \cdot \widetilde{b}$.
For every $k \in \Z_{\ge 1}$, there exists the geodesic segment $\beta_k \subset U$ joining $b$ to itself representing $[\gamma]^k$, such that $l_B(\beta_k)$ 
is asymptotically approximated by $\log(k)$,
so does $d_{\pi_1(B,t) \curvearrowright \Hyperbolic^2}(1, [\gamma]^k)$.

Assume that $\rho([\gamma])$ has finite order.
Then $d_{\Mod_h \curvearrowright \Teich_h}(1, \rho([\gamma])^k)$ is bounded, for $k \in \Z_{\ge 1}$, which is a contradiction.
Assume that $\rho([\gamma])$ has positive translation length.
Then $d_{\Mod_h \curvearrowright \Teich_h}(1, \rho([\gamma])^k )$
is asymptotically linearly approximated by $k$,
which is also a contradiction.
\end{proof}

As a consequnce, the quasi-isometric embedding implies that the peripheral monodromies of any two cusps are not disjoint along any geodesic segment between the boundaries of their cusp regions, as the following.

\begin{lemma}
\label{lemma::peripherals_intersecting}
Suppose that the representation
$\rho:(\pi_1(B,t),d_{\pi_1(B,t) \curvearrowright \Hyperbolic^2}) \rightarrow (\Mod_h, d_{\Mod_h \curvearrowright \Teich_h})$ 
is a quasi-isometric embedding.
Let $U_1, U_2 \subset B$ be two cusp regions, which could be the same.
Let $\kappa \subset B$ be a geodesic segment perpendicularly joining $\partial U_1$ to $\partial U_2$.
Set $t_0 = \partial U_1 \cap \kappa$ and take an arbitrary path $\gamma$ joining $t$ to $t_0$;
see Figure \ref{figure::kappa}.
The loop along $\gamma \cup \partial U_1$ based at $t$ that goes once around $U_1$ clockwise is denoted by $\gamma_1$.
The loop along $\gamma \cup \kappa \cup \partial U_2$ based at $t$ that goes once around $U_2$ clockwise is denoted by $\gamma_2$.
Then the monodromies $\phi_1 \coloneqq \rho([\gamma_1])$ and $\phi_2 \coloneqq \rho([\gamma_2])$ are roots of multi-twists supported by intersecting multi-curves.
\end{lemma}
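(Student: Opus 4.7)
The plan is to argue by contradiction: suppose $\phi_1$ and $\phi_2$ are roots of multi-twists supported on disjoint multi-curves $\boldsymbol{\alpha}^{(1)}$ and $\boldsymbol{\alpha}^{(2)}$. By Lemma \ref{lemma::translation_length} (b), fix $\mu_1,\mu_2 \in \Z_{\ge 1}$ so that $T_i \coloneqq \phi_i^{\mu_i}$ is an honest multi-twist supported on $\boldsymbol{\alpha}^{(i)}$. Disjointness of the supports makes $T_1$ and $T_2$ commute in $\Mod_h$, so the commutator $[T_1,T_2]$ is trivial.

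The next step is to test the quasi-isometric embedding on the sequence $g_n \coloneqq [P_1,P_2]^n \in \pi_1(B,t)$, where $P_i$ denotes the parabolic isometry of $\Hyperbolic^2$ associated to $[\gamma_i]^{\mu_i}$ via $\rho_{t,\widetilde{t}}:\pi_1(B,t) \xrightarrow{\sim} \Gamma$. Then $\rho(g_n) = [T_1,T_2]^n = 1$, so $d_{\Mod_h \curvearrowright \Teich_h}(1, \rho(g_n)) = 0$ identically in $n$. The quasi-isometric embedding hypothesis will be contradicted as soon as I show that $d_{\pi_1(B,t) \curvearrowright \Hyperbolic^2}(1, g_n)$ grows linearly in $n$, which amounts to showing that $[P_1,P_2]$ is a hyperbolic isometry; for then $d_{\pi_1(B,t)\curvearrowright \Hyperbolic^2}(1,g_n) \ge \tau([P_1,P_2]^n) = n\cdot \tau([P_1,P_2])$.

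The geometric input needed is that $P_1$ and $P_2$ fix distinct points on $\partial\Hyperbolic^2$. When $U_1 \ne U_2$, this is immediate: distinct cusps of $B$ correspond to disjoint $\Gamma$-orbits of ideal points. When $U_1 = U_2$, I will argue that the arc $\kappa$, being a non-trivial geodesic in $B \setminus U_1$ joining two points of $\partial U_1$, lifts to a geodesic of $\Hyperbolic^2$ whose two endpoints lie in two distinct horoballs of the preimage of $U_1$; consequently the conjugators $\gamma$ and $\gamma \cup \kappa$ carry $\widetilde{t}$ into different horoballs and produce parabolics with distinct fixed points. Once distinctness is in hand, a direct computation after conjugating $P_1,P_2$ into the normal form $\begin{pmatrix} 1 & a \\ 0 & 1 \end{pmatrix}$ and $\begin{pmatrix} 1 & 0 \\ b & 1 \end{pmatrix}$ in $\PSL(2,\R)$ yields $\trace([P_1,P_2]) = 2 + a^2b^2 > 2$, so $[P_1,P_2]$ is hyperbolic with positive translation length.

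The main obstacle I expect is the verification of distinct parabolic fixed points in the case $U_1 = U_2$, which requires careful lifting of $\kappa$ and bookkeeping of horoballs over $U_1$ in the universal cover. The remaining ingredients---the commutativity of disjoint multi-twists, the trace calculation giving hyperbolicity of the commutator, and the clash between linear growth in $\pi_1(B,t)$ and zero growth in $\Mod_h$---are routine.
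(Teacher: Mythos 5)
Your proof is correct, but it takes a genuinely different route from the paper's. The paper makes the same reduction (via Lemma \ref{lemma::translation_length} (b)) to the case where $\phi_1^{\mu_1}$ and $\phi_2^{\mu_2}$ are multi-twists along disjoint multi-curves, but then tests the hypothesis on the single element $[\gamma_1]^{2\mu_1}\cdot[\gamma_2]^{2\mu_2}$: this element is non-peripheral, yet its image is a multi-twist along $\boldsymbol{\alpha}_1\cup\boldsymbol{\alpha}_2$ and hence has zero translation length, contradicting Lemma \ref{lemma::translation_length} (a). You instead test on powers of the commutator $[P_1,P_2]$, whose image is exactly trivial, and contradict the quasi-isometric lower bound directly once $[P_1,P_2]$ is known to be hyperbolic. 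Your route avoids any appeal to the orbit growth of multi-twists (everything at the target collapses to distance zero), but it shifts the burden to the source: you must check that $P_1$ and $P_2$ are parabolics with distinct fixed points --- your horoball argument for the case $U_1=U_2$ is the right one, since a geodesic arc meeting a horocycle perpendicularly cannot return to the same horocycle, so the lift of $\kappa$ joins two distinct horoballs over the cusp --- and then carry out the trace computation $\trace([P_1,P_2])=2+a^2b^2>2$. The paper's route needs instead to know that the product $[\gamma_1]^{2\mu_1}\cdot[\gamma_2]^{2\mu_2}$ is non-peripheral, a comparable piece of hyperbolic-geometry bookkeeping. Both arguments are valid; yours has the aesthetic advantage that the contradiction is with the quasi-isometric inequality itself rather than with one of its consequences.
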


\begin{figure}[htb]
\centering
\def\svgwidth{.9\textwidth}
\input{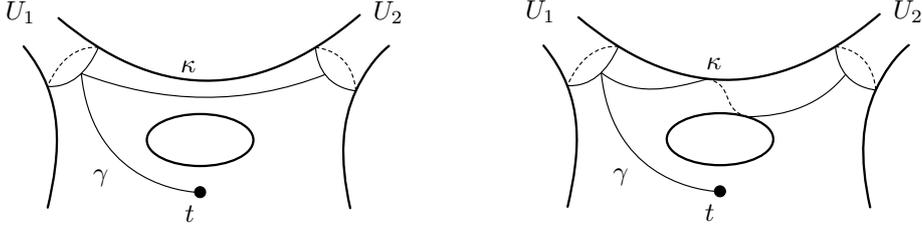}
  \caption{Different selections of the geodesic segment $\kappa$ joining the boundaries of two given cusp regions $U_1$ and $U_2$, each providing a pair of peripheral monodromies $(\phi_1,\phi_2)$.}
  \label{figure::kappa}
\end{figure}

\begin{proof}
By Lemma \ref{lemma::translation_length} (b), there exist $\mu_1, \mu_2 \in \Z_{\ge 1}$ such that
$\phi_1^{\mu_1}$ is a multi-twist along the multi-curve $\boldsymbol{\alpha}_1$
and 
$\phi_2^{\mu_2}$ is a multi-twist along the multi-curve $\boldsymbol{\alpha}_2$.
Assume that $\boldsymbol{\alpha}_1 \cup \boldsymbol{\alpha}_2$ is itself a multi-curve, i.e., a set of disjoint simple closed curves. 
Then, the element $[\gamma_1]^{2 \mu_1} \cdot [\gamma_2]^{2 \mu_2} \in \pi_1(B,t)$ is non-peripheral which is represented by a multi-twist along $\boldsymbol{\alpha}_1 \cup \boldsymbol{\alpha}_2$, which contradicts with Lemma \ref{lemma::translation_length} (a).
\end{proof}

\section{Peripheral monodromy}
\label{section::peripheral}

\subsection{Peripheral monodromies of finite order}
\label{subsection::peripheral_finite_order}

We only consider the holomorphic map $F:B\rightarrow \Moduli_h$ in this subsection.

\begin{proposition}[Proposition 2.5 in \cite{Zhang2024holomorphic}]
\label{proposition::torsion-peripheral-qii}
Let $F:B\rightarrow \Moduli_h$ be a holomorphic map.
If some peripheral monodromy of $F$ has finite order, then $F$ does not satisfy $\qii(F)$.
\end{proposition}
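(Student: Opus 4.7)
The plan is to prove the contrapositive directly by constructing, near the cusp with finite-order peripheral monodromy, a sequence of points that violates the lower quasi-isometry bound in $\qii(F)$. Exploiting the finite order of the monodromy, I would first pass to a finite cyclic cover of the cusp region, thereby trivializing the monodromy; I would then extend the resulting holomorphic map across the puncture, which forces the intrinsic distance $d_F$ to remain bounded while the hyperbolic distance $d_B$ diverges.

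Explicitly, let $U \subset B$ be a cusp region whose peripheral generator $[\gamma]$ maps to $\phi \coloneqq F_*([\gamma])$ of finite order $\mu \ge 1$. Modelling the universal cover of $U$ as the horoball $\{z = x+iy : y \ge 1\}$ with $[\gamma]$ acting by the parabolic $\tau: z \mapsto z + 2$, the equivariance $\widetilde{F} \circ \tau = \phi \circ \widetilde{F}$ together with $\phi^\mu = \mathrm{id}$ forces $\widetilde{F}$ to be $\tau^\mu$-invariant. It therefore descends to a holomorphic map $\widetilde{F}_\mu : \widetilde{U}_\mu \to \Teich_h$ on the $\mu$-fold cyclic cover $\widetilde{U}_\mu = \{y \ge 1\}/\langle z \mapsto z + 2\mu \rangle$ of $U$, whose peripheral monodromy in $\Mod_h$ is now trivial. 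Under the uniformization $w = \exp(\pi i z/\mu)$, the cover $\widetilde{U}_\mu$ becomes a punctured disc $\{0 < |w| \le e^{-\pi/\mu}\}$ with the cusp at $w = 0$. Composing with $\Teich_h \to \Moduli_h$ yields a holomorphic map $G$ from this punctured disc to $\Moduli_h$ with trivial monodromy around $0$. By the classical extension theorem, $G$ extends continuously into $\DeligneMumford_h$; since the local monodromy around any nodal stratum is a non-trivial multi-twist along vanishing cycles, triviality of the monodromy of $G$ forces the limit to lie in the interior $\Moduli_h$, giving a holomorphic extension $\overline{G}: \{|w| \le e^{-\pi/\mu}\} \to \Moduli_h$ with $\overline{G}(0) = q_0 \in \Moduli_h$.

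To finish, choose a sequence $b_n \in U$ with $\epsilon_U(b_n) \to 0$ and lifts $\widetilde{b}_n \in \widetilde{U}_\mu$ with $w(\widetilde{b}_n) \to 0$; fix $b_0 \in \partial U$ with a lift $\widetilde{b}_0 \in \widetilde{U}_\mu$. Since $\overline{G}$ is holomorphic at $0$, its pull-back $\overline{G}^* \Kob_{\Moduli_h}$ is bounded by $C \, |dw|$ on a neighbourhood of $0$. A path in the $w$-chart joining $\widetilde{b}_0$ to $\widetilde{b}_n$ through a point close to $w = 0$ has Euclidean length at most $2 e^{-\pi/\mu}$, hence $\widetilde{F}_\mu^* \Kob_{\Teich_h}$-length uniformly bounded in $n$; projecting to $U$ produces a path from $b_0$ to $b_n$ whose $d_F$-length is uniformly bounded. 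Since $d_B(b_0, b_n) \to \infty$, the lower bound $d_F(b_0, b_n) \ge \lambda^{-1} d_B(b_0, b_n) - \epsilon$ of $\qii(F)$ fails, completing the argument. The only delicate step is justifying that $G$ extends into $\Moduli_h$ itself rather than onto the nodal boundary of $\DeligneMumford_h$; everything else is a direct length computation in the uniformizing coordinate $w$.
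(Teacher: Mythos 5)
Your argument is correct and takes essentially the same route as the paper: the paper's proof invokes Imayoshi's extension theorem to extend $F|_U$ holomorphically across the puncture (which packages your cyclic-cover reduction and the Deligne--Mumford boundary-monodromy argument into one citation), and then concludes exactly as you do, from the boundedness of the pulled-back Kobayashi metric on a compact neighbourhood of the filled-in cusp versus the divergence of $d_B$. No gaps.
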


\begin{corollary}
\label{corollary::torsion-peripheral-qiit}
Let $F:B\rightarrow \Moduli_h$ be a holomorphic map.
If some peripheral monodromy of $F$ has finite order, then $F$ does not satisfy $\qiit(F)$.
\end{corollary}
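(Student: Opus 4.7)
The plan is to deduce this corollary almost immediately from material already established in the excerpt, without any new cusp-region analysis. The point is that a holomorphic map is automatically $1$-shrinking (as noted right after the definition of $\lambda_0$-shrinking), so Lemma~\ref{lemma::rigidities-implications} applies and gives the implication $\qiit(F) \Rightarrow \qii(F)$. Contrapositively, any failure of $\qii(F)$ forces the failure of $\qiit(F)$.

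Concretely, I would argue as follows. Assume by contradiction that $F$ satisfies $\qiit(F)$. Since $F$ is holomorphic and hence $1$-shrinking, Lemma~\ref{lemma::rigidities-implications} yields $\qii(F)$. But Proposition~\ref{proposition::torsion-peripheral-qii} states that a holomorphic map with a peripheral monodromy of finite order cannot satisfy $\qii(F)$, contradicting the hypothesis on the peripheral monodromy. Therefore $F$ cannot satisfy $\qiit(F)$.

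There is no real obstacle here — the corollary is a formal consequence of Proposition~\ref{proposition::torsion-peripheral-qii} together with the chain of implications in Lemma~\ref{lemma::rigidities-implications}. The substantive content (the existence of a sequence $b_n \in U$ with $d_B(t,b_n) \to \infty$ while $d_F(t,b_n)$ stays bounded, via Imayoshi's removable-singularity theorem for holomorphic maps into $\Moduli_h$) has already been absorbed into Proposition~\ref{proposition::torsion-peripheral-qii}. I expect the write-up to consist of a single short paragraph invoking these two results.
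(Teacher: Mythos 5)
Your proposal is correct and is exactly the paper's argument: the paper's proof of this corollary is a one-line invocation of Proposition~\ref{proposition::torsion-peripheral-qii} together with the implication $\qiit(F)\Rightarrow\qii(F)$ from Lemma~\ref{lemma::rigidities-implications}. Nothing is missing.
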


\begin{proof}
This comes from Proposition \ref{proposition::torsion-peripheral-qii} and Lemma \ref{lemma::rigidities-implications}.
\end{proof}

\subsection{Peripheral monodromies of infinite order}
\label{subsection::peripheral_infinite_order}

Consider a $\lambda_0$-shrinking differentiable map $F:B\rightarrow \Moduli_h$
with the lift $\widetilde{F}:\Hyperbolic^2 \rightarrow \Teich_h$.
Suppose that all peripheral monodromies of the $\lambda_0$-shrinking map $F:B\rightarrow \Moduli_h$ are of infinite order.
We consider the restriction of $F$ to some cusp region $U\subset B$ with $l_B(\partial U)=\epsilon_0 \le 2$.

Fix $t \in \partial U$ with the lift $\widetilde{t}$
and consider the monodromy representation $F_*$ of $F$.
Suppose that the peripheral monodromy
$\phi = F_*(\partial U) \in \Mod_h$
is of infinite order.
Therefore, there exists a positive integer $\mu=\mu(\phi)$ bounded above by a constant determined by $h$ such that $\phi^\mu$ is a multi-twist,
namely $\phi^\mu = T_{\alpha_1}^{r_1}\circ \cdots \circ T_{\alpha_m}^{r_m} \eqqcolon T$ with each $r_i \in \Z_{\neq 0}$.
Every $\alpha_i$ also induces a multi-valued length map on $U$,
defined by
\begin{equation*}
\mathcal{L}_{\alpha_i}:U
\ni b
\mapsto
\left[
L_{\alpha_i}\left( \widetilde{F}(\widetilde{b})
\right),
L_{\alpha_i}\left( \phi \cdot \widetilde{F}(\widetilde{b})
\right),
\ldots,
L_{\alpha_i}\left( \phi^{\mu-1}\cdot
\widetilde{F}(\widetilde{b})
\right)
\right]
\in
\R_{>0}^\mu / (\Z / \mu \Z),
\end{equation*}
where $\beta \subset B$ is a path joining $t$ to $b$ with the lift $\widetilde{\beta}$ joining $\widetilde{t}$ to $\widetilde{b}$,
where the group $\Z / \mu \Z$ acts by cyclic permutation.
It is straightforward to check that $\mathcal{L}_{\alpha_i}$ is well-defined as
$\phi \circ \widetilde{F} = \widetilde{F} \circ [\partial U]$
and
$L_{\alpha_i} \circ \phi^\mu = L_{\alpha_i}$.

\begin{proposition}
\label{proposition::l_epsilon}
There exists a constant $K^+(F,\epsilon_0) > 0$,
depending only on $F$ and $\epsilon_0$,
such that for all $i=1,\ldots,m$, $j=1,\ldots,\mu$ and $b \in U$, we have
\begin{equation*}
\frac{1}{K^+(F,\epsilon_0)} \left( \epsilon_U(b) \right)^{\lambda_0}
\le
(\mathcal{L}_{\alpha_i}(b))_j
\le 
K^+(F,\epsilon_0) \cdot \epsilon_U(b).
\end{equation*}
\end{proposition}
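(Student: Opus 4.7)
The plan is to establish the two inequalities separately, both rooted in the basic consequence of $\lambda_0$-shrinking: integrating the pointwise norm inequality yields
\begin{equation*}
\dtei{h}(\widetilde{F}(\widetilde{b}_1), \widetilde{F}(\widetilde{b}_2)) \le \frac{\lambda_0}{2} \dhyp(\widetilde{b}_1, \widetilde{b}_2)
\end{equation*}
for all $\widetilde{b}_1, \widetilde{b}_2 \in \Hyperbolic^2$, using $\Kob_{\Hyperbolic^2} = \tfrac{1}{2} d_{\Hyperbolic^2}$ and $\Kob_{\Teich_h} = \dtei{h}$. For the upper bound, I fix $b \in U$ with a lift $\widetilde{b}$ and write $\epsilon = \epsilon_U(b)$, $\widetilde{q} = \widetilde{F}(\widetilde{b})$. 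The horocyclic arc joining $\widetilde{b}$ to $[\partial U] \cdot \widetilde{b}$ has hyperbolic length exactly $\epsilon$ in the cusp model $\{y \ge 2/\epsilon_0\}/(z \mapsto z+2)$, so the displayed inequality yields $\dtei{h}(\widetilde{q}, \phi \widetilde{q}) \le \lambda_0 \epsilon/2$. Iterating $\mu$ times, using that $\phi$ acts on $\Teich_h$ by isometries, gives $\dtei{h}(\widetilde{q}, T\widetilde{q}) \le \mu\lambda_0 \epsilon/2 \le \mu\lambda_0 \epsilon_0/2$. Lemma~\ref{lemma::l_le_teich}, applied with $N_2 = \mu\lambda_0\epsilon_0/2$, then produces $L_{\alpha_i}(\widetilde{q}) \le C_1(N_2) \cdot \mu\lambda_0 \epsilon/2$. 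For the remaining components $(\mathcal{L}_{\alpha_i}(b))_j = L_{\alpha_i}(\phi^{j-1}\widetilde{q})$ the same bound holds, since $\phi^{j-1}$ acts by a $\Teich_h$-isometry commuting with $T = \phi^\mu$, so the preceding argument applies verbatim to $\phi^{j-1}\widetilde{q}$.

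For the lower bound---the subtler step where the exponent $\lambda_0$ enters---I connect $\widetilde{b}$ to the foot $\widetilde{b}_1 \in \partial \widetilde{U}$ of the vertical geodesic through $\widetilde{b}$; this segment has hyperbolic length $\log(\epsilon_0/\epsilon)$. Shrinking then yields $\dtei{h}(\widetilde{F}(\widetilde{b}), \widetilde{F}(\widetilde{b}_1)) \le \tfrac{\lambda_0}{2}\log(\epsilon_0/\epsilon)$. The key step is to insert Wolpert's inequality $\dtei{h}(x,y) \ge \tfrac{1}{2}|\log(L_{\alpha_i}(x)/L_{\alpha_i}(y))|$ and rearrange to obtain
\begin{equation*}
L_{\alpha_i}(\widetilde{F}(\widetilde{b})) \ge L_{\alpha_i}(\widetilde{F}(\widetilde{b}_1)) \cdot (\epsilon/\epsilon_0)^{\lambda_0}.
\end{equation*}
Since $\partial U$ is compact and $L_{\alpha_i}$ is continuous and strictly positive on $\Teich_h$, the values $L_{\alpha_i}(\phi^{j-1}\widetilde{F}(\widetilde{b}_1))$ admit a uniform positive minimum $L_{\min}$ as $b_1$ ranges over $\partial U$ and $j$ over $\{1,\ldots,\mu\}$, which gives the desired lower bound with constant depending only on $F$ and $\epsilon_0$.

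The main conceptual obstacle is recognising that the exponent $\lambda_0$ in the lower bound arises from coupling the $\lambda_0$-Lipschitz distance comparison (from shrinking) with the multiplicative-logarithmic form of Wolpert's distortion estimate; no direct arithmetic with $\lambda_0$ could turn a purely Lipschitz hypothesis into a polynomial exponent. The upper bound, in contrast, is linear in $\epsilon$ because the horocyclic length is itself linear in $\epsilon$, and Lemma~\ref{lemma::l_le_teich} transfers the resulting Teichmüller bound linearly to $L_{\alpha_i}$. Taking the maximum of the two resulting constants furnishes the single $K^+(F,\epsilon_0)$ stated in the proposition.
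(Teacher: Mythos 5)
Your proposal is correct and follows essentially the same route as the paper: the upper bound comes from bounding $\dtei{h}(\widetilde{q},T\cdot\widetilde{q})$ by $\tfrac{\lambda_0}{2}\mu\,\epsilon_U(b)$ via the horocyclic loop and then applying Lemma~\ref{lemma::l_le_teich}, and the lower bound comes from coupling the shrinking estimate along the perpendicular geodesic to $\partial U$ with Wolpert's inequality and taking the (positive, by compactness of $\partial U$) minimum of the length functions on the boundary. Your explicit treatment of the components $j>1$ via the isometry $\phi^{j-1}$ commuting with $T=\phi^{\mu}$ is a detail the paper leaves implicit, but the argument is the same.
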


\begin{proof}
Given $b\in U$, let $\widetilde{q} \in \Teich_h$ be a lift of $q = F(b) \in \Moduli_h$.
As $\dtei{h}(\widetilde{q}, \phi^\mu \cdot \widetilde{q}) \le \frac{\lambda_0}{2} \cdot \epsilon_U(b) \mu \le 
\frac{\lambda_0}{2} \cdot \epsilon_0 \mu$,
by Lemma \ref{lemma::l_le_teich}, we get
\begin{equation*}
\big(\mathcal{L}_{\alpha_i}(b)\big)_j
\le
C_1\left(\frac{\lambda_0}{2} \cdot \epsilon_0 \mu\right) \cdot
\frac{\lambda_0}{2} \cdot
\epsilon_U(b) \mu.
\end{equation*}

On the other hand, suppose that $\beta$ is a geodesic segment joining $b$ to $b_0 \in \partial U$ perpendicularly to the boundary.
There exists a lift of $F(\beta)$ joining $\widetilde{q}$ to some lift $\widetilde{q_0}$ of $F(b_0)$.
By Wolpert's inequality, we get
\begin{equation*}
\frac{1}{2} \log{ \frac{\epsilon_U(b_0)}{\epsilon_U(b)} }
= \frac{1}{2} d_U(b_0,b)
\ge
\frac{1}{\lambda_0} \dtei{h}(\widetilde{F}(\widetilde{b_0}), \widetilde{F}(\widetilde{b}))
\ge
\frac{1}{2\lambda_0}
\log{ \frac{L_{\alpha_i}(\widetilde{q_0})}{L_{\alpha_i}(\widetilde{q})}},
\end{equation*}
for every $i=1,\ldots,m$.
Hence, we obtain 
\begin{equation*}
\big( \mathcal{L}_{\alpha_i}(b) \big)_j \ge
\frac{1}{\epsilon_0^{\lambda_0}} \min_{d \in \partial U} \min_{i',j'} \big( \mathcal{L}_{\alpha_{i'}}(d) \big)_j \cdot \left(\epsilon_U(b)\right)^{\lambda_0}.
\end{equation*}
\end{proof}

\subsubsection{Quasi-isometric embedding of the cusp region}

If $F$ is a holomorphic map, then its restriction to the cusp region $U$ is a quasi-isometric embedding
$\left. F \right|_{U} : (U,d_U) \rightarrow (\Moduli_h,\dm{h})$,
where $d_U = \left. d_B \right|_U$
(see \cite[Theorem 3.12]{Zhang2024holomorphic}).
In general, we have the following.

\begin{proposition}
\label{proposition::qie-cusp}
If the differentiable map F is 1-shrinking,
then there exist constants $K_1(F,\epsilon_0)>0$ and $K_2(F,\epsilon_0)$,
depending only on $F$ and $\epsilon_0$,
such that for all $b_1,b_2 \in U$, we have
\begin{equation*}
\frac{1}{2} d_U(b_1,b_2) \ge
\dm{h}(F(b_1),F(b_2)) \ge
\frac{1}{K_1(F,\epsilon_0)} d_U(b_1,b_2) - K_2(F,\epsilon_0).
\end{equation*}
\end{proposition}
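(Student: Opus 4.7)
The upper bound is immediate from the hypotheses. Since $F$ is $1$-shrinking, the pullback inequality gives $d_F(b_1,b_2) \le \tfrac{1}{2}\,d_U(b_1,b_2)$ on $U$, and by construction of the extrinsic distance, $d_{\Moduli_h}(F(b_1),F(b_2)) \le d_F(b_1,b_2)$. Composing, $\tfrac{1}{2}\,d_U(b_1,b_2) \ge d_{\Moduli_h}(F(b_1),F(b_2))$.

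For the lower bound $d_{\Moduli_h}(F(b_1),F(b_2)) \ge (1/K_1)\,d_U(b_1,b_2) - K_2$, I first absorb the short-distance regime $d_U(b_1,b_2) \le 5$ into the additive constant $K_2$. In the complementary regime, Lemma \ref{lemma::hyperbolic_length_cusp} yields $d_U(b_1,b_2) \asymp \Delta$, where $\Delta := \max\{|\log(\epsilon_1/\epsilon_2)|,\,\log(w\epsilon_1),\,\log(w\epsilon_2)\}$, with $\epsilon_i = \epsilon_U(b_i)$ and $w$ the rounded winding number of the $U$-geodesic joining $b_1$ and $b_2$. Working in the upper half-plane model $\widetilde{U}=\{y>1\}/(z \mapsto z+2)$, I lift $b_i$ to $\widetilde{b_i}$ so that the hyperbolic geodesic between them has horizontal displacement comparable to $2w$, and set $\widetilde{q_i}=\widetilde{F}(\widetilde{b_i})$. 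The equivariance $\widetilde{F}(z+2)=\phi\cdot\widetilde{F}(z)$ together with $\Mod_h$-invariance identifies
\[d_{\Moduli_h}(F(b_1),F(b_2)) \;=\; \inf_{\psi\in\Mod_h}\dtei{h}(\widetilde{q_1},\psi\cdot\widetilde{q_2}),\]
and it suffices to bound this infimum below by $(1/C)\Delta-C'$.

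The two key ingredients are Proposition \ref{proposition::l_epsilon} with $\lambda_0=1$, giving the bi-Lipschitz estimate $\tfrac{1}{K^+}\epsilon_i \le L_{\alpha_j}(\widetilde{q_i}) \le K^+\epsilon_i$ for each $\alpha_j$ in $\phi^\mu=\prod_j T_{\alpha_j}^{r_j}$, together with Lemma \ref{lemma::teich_le_log(rl)}\,(a). For sufficiently small $\epsilon_i$ the $\alpha_j$'s are the Margulis-short curves of $\widetilde{q_i}$. Wolpert's inequality applied to the curve $\psi(\alpha_j)$ gives $\dtei{h}(\widetilde{q_1},\psi\widetilde{q_2}) \ge \tfrac{1}{2}|\log(L_{\psi(\alpha_j)}(\widetilde{q_1})/L_{\alpha_j}(\widetilde{q_2}))|$: if $\psi$ fails to permute $\{\alpha_j\}_j$, then $L_{\psi(\alpha_j)}(\widetilde{q_1})$ is bounded below by the Margulis constant and the distance is $\gtrsim |\log\epsilon_2|$, which dominates $|\log(\epsilon_1/\epsilon_2)|$; if $\psi$ does permute $\{\alpha_j\}_j$, the same estimate directly yields $\dtei{h} \gtrsim |\log(\epsilon_1/\epsilon_2)|$. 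In the latter case, up to a finite-index loss, $\psi$ agrees with a multi-twist $\prod_j T_{\alpha_j}^{s_j}$, and the horizontal drift of $\widetilde{\beta}$ combined with the equivariance of $\widetilde{F}$ forces $|s_j|\asymp w$. Lemma \ref{lemma::teich_le_log(rl)}\,(a) then provides $\dtei{h}(\widetilde{q_1},\psi\widetilde{q_2}) \ge \tfrac{1}{4}\log(w\epsilon_i) - O(1)$ whenever $w\epsilon_i$ exceeds the stated threshold. Taking the maximum of the two contributions and invoking Lemma \ref{lemma::hyperbolic_length_cusp} one last time completes the lower bound.

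The main obstacle is the Margulis-thin-part reduction that restricts the infimum over $\Mod_h$ to (essentially) the multi-twist subgroup along $\{\alpha_j\}_j$: one must exclude exotic $\psi$'s that might reshuffle the short-curve system and bring $\psi\widetilde{q_2}$ unexpectedly close to $\widetilde{q_1}$. This is precisely where the $1$-shrinking assumption enters in an essential way, since it is Proposition \ref{proposition::l_epsilon} with $\lambda_0=1$ — rather than the purely one-sided upper bound available for $\lambda_0>1$ — that makes the $\alpha_j$'s genuinely short at both $\widetilde{q_1}$ and $\widetilde{q_2}$ with a bi-Lipschitz relation to the heights $\epsilon_i$. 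This mirrors the obstruction to extending the result noted in the remark following Theorem \ref{thmx::shrinking}.
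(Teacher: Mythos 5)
Your upper bound and the identity $\dm{h}(F(b_1),F(b_2))=\inf_{\psi\in\Mod_h}\dtei{h}(\widetilde{q_1},\psi\cdot\widetilde{q_2})$ are fine, but the lower bound has genuine gaps. First, Proposition \ref{proposition::l_epsilon} only shows that the curves $\alpha_j$ supporting $\phi^{\mu}$ are short on $\widetilde{F}(\widetilde{b_i})$, of order $\epsilon_i$; it does not show that they are the \emph{only} Margulis-short curves there, since $F$ is an arbitrary $1$-shrinking map and its image may enter the thin part of moduli space for reasons unrelated to the peripheral monodromy. Hence your dichotomy collapses: a $\psi$ that fails to permute $\{\alpha_j\}$ may still send some $\alpha_j$ to a very short curve of $\widetilde{q_1}$, and the claimed lower bound by the Margulis constant is unavailable. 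Second, even when $\psi$ does preserve $\{\alpha_j\}$ setwise, it is not virtually a multi-twist along $\{\alpha_j\}$ --- the stabilizer of a multicurve contains, e.g., partial pseudo-Anosov maps on complementary subsurfaces --- and the infimum is over \emph{all} of $\Mod_h$, so the ``horizontal drift forces $|s_j|\asymp w$'' step has no justification: the optimal $\psi$ need not be related to any path from $b_1$ to $b_2$. Finally, the winding-number analysis is beside the point for this statement: since horocycles in $U$ have length at most $\epsilon_0$, one always has $d_U(b_1,b_2)\le|\log(\epsilon_1/\epsilon_2)|+\epsilon_0$, so the only quantity you must bound from below is $|\log(\epsilon_1/\epsilon_2)|$; the terms $\log(w\epsilon_i)$ in Lemma \ref{lemma::hyperbolic_length_cusp} are needed for Proposition \ref{proposition::cusp-quasi-isom} (lengths of paths in a fixed homotopy class), not here.

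The paper's proof sidesteps all of this with an anchoring trick you are missing. Fix $b_{\max}\in\partial U$ and estimate, for any $b$ at height $\epsilon$,
\begin{equation*}
\dm{h}(F(b),F(b_{\max}))\;\ge\;\frac{1}{2}\log\frac{\sys(F(b_{\max}))}{L_{\alpha_1}(\widetilde{q})}\;\ge\;\frac{1}{2}\log\frac{\epsilon_0}{\epsilon}-K_2',
\end{equation*}
where the numerator uses that the systole is $\Mod_h$-invariant (so no case analysis over $\psi$ is needed and no knowledge of the short-curve system of $F(b)$ is required), and the denominator is controlled by Lemma \ref{lemma::l_le_teich} because $\dtei{h}(\widetilde{q},\phi^{\mu}\cdot\widetilde{q})\le\frac{1}{2}\mu\epsilon$ by the $1$-shrinking hypothesis. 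The general case then follows from the reverse triangle inequality $\dm{h}(q_1,q_2)\ge|\dm{h}(q_1,q_{\max})-\dm{h}(q_2,q_{\max})|$ together with $d_U(b_1,b_2)\le|\log(\epsilon_1/\epsilon_2)|+\epsilon_0$. If you want to keep your direct approach, you would at minimum need to first prove $\sys(F(b))\ge c(F,\epsilon_0)\,\epsilon_U(b)$ (which does follow from Wolpert applied to the $1$-shrinking bound $\dm{h}(F(b),F(b_{\max}))\le\frac{1}{2}(\log(\epsilon_0/\epsilon_U(b))+\epsilon_0)$) and then run Wolpert against the systole rather than against a purported complete short-curve system; as written, the argument does not close.
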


The following proof, though similar to that of Theorem 3.12 in \cite{Zhang2024holomorphic}, is included for completeness.

\begin{proof}[Proof of Proposition \ref{proposition::qie-cusp}]
We first consider the special case where one point is lying on the boundary $\partial U$.
Let $b_{\max}\in\partial U$ and $b\in H_{U,\epsilon} \subset U$ be arbitrary with $\epsilon\le \epsilon_0 \le 2$.
Take the lift $\widetilde{b_{\max}} \in \Hyperbolic^2$ of $b_{\max}$
and the lift
$\widetilde{b} \in \Hyperbolic^2$ of $b$ such that $\dm{h}(F(b_{\max}), F(b))=
\dtei{h}(\phi_{\max} \cdot \widetilde{F}(\widetilde{b_{\max}}), \phi \cdot \widetilde{F}(\widetilde{b}))$,
for some $\phi_{\max}, \phi \in \Mod_h$.
For convenience, we set $q_{\max}=F(b_{\max})$, $q=F(b)$, $\widetilde{q_{\max}}=\widetilde{F}(\widetilde{b_{\max}})$ and $\widetilde{q}=\widetilde{F}(\widetilde{b})$.
By Wolpert's inequality, Lemma \ref{lemma::l_le_teich}
with $\dtei{h}(\widetilde{q}, T \cdot \widetilde{q}) \le (1/2) \epsilon \mu \le \mu$
and the triangle inequality in $(U,d_U)$, we get
\begin{align*}
\dm{h}(q,q_{\max})
&=\dtei{h}(\phi^{-1}\circ \phi_{\max} \cdot \widetilde{q_{\max}},\widetilde{q})
\ge
\frac{1}{2}\log{
  \frac{L_{\alpha_1}(\phi^{-1}\circ \phi_{\max} \cdot \widetilde{q_{\max}})}{L_{\alpha_1}(\widetilde{q})}
}\\
&\ge
\frac{1}{2}\log{
  \frac{\sys(\widetilde{q_{\max}})}{C_1(\mu) \dtei{h}(\widetilde{q}, T\cdot \widetilde{q})}
}
\ge
\frac{1}{2}\log{
  \frac{\sys(q_{\max})}{C_1(\mu) \mu \epsilon / 2}
}
=
\frac{1}{2}\Big\{\log{
  \frac{\sys(q_{\max})}{\epsilon_0 C_1(\mu) \mu / 2}
}
+
\log{\frac{\epsilon_0}{\epsilon}}\Big\}\\
&\ge
\frac{1}{2}d_U(b,b_{\max})-K'_2
\end{align*}
where
$\frac{1}{2} d_U(b,b_{\max}) \le \frac{1}{2}\left( \log \frac{\epsilon_0}{\epsilon} + \epsilon_0 \right)$
and
$K'_2=\epsilon_0/2-(1/2)\log{\sys(q_{\max})}+(1/2)\log{(\epsilon_0 C_1(\mu) \mu/2)}$.

In general, let $b_1,b_2\in U$ be arbitrary.
Set $q_1=F(b_1)$, $q_2=F(b_2)$ and
take the corresponding horocycles
$H_{U,\epsilon_1}\ni p_1$, $H_{U,\epsilon_2}\ni p_2$.
Using the above inequality and triangle inequalities in both $(U,d_U)$ and $(\mathcal{M}_h,\dm{h})$, we conclude that
\begin{align*}
\frac{1}{2}d_U(p_1,p_2)
&\ge
\dm{h}(q_1,q_2) \\
&\ge
|\dm{h}(q_1,q_{\max})-\dm{h}(q_2,q_{\max})|
\ge
\frac{1}{2}|d_U(p_1,p_{\max})-d_U(p_2,p_{\max})|-K'_2\\
&\ge
\frac{1}{2} \max\Big\{
  \Big(\log{\frac{\epsilon_0}{\varepsilon_1}}-\epsilon_0\Big)-
  \Big(\log{\frac{\epsilon_0}{\varepsilon_2}}+\epsilon_0\Big),
  \Big(\log{\frac{\epsilon_0}{\varepsilon_2}}-\epsilon_0\Big)-
  \Big(\log{\frac{\epsilon_0}{\varepsilon_1}}+\epsilon_0\Big)
\Big\}-K'_2\\
&=
\frac{1}{2} \max\Big\{
  \log{\frac{\varepsilon_1}{\varepsilon_2}},
  \log{\frac{\varepsilon_2}{\varepsilon_1}}
\Big\}-\epsilon_0-K'_2\\
&\ge
\frac{1}{2}(d_U(p_1,p_2)-\epsilon_0)-\epsilon_0-K'_2
=
\frac{1}{2}d_U(p_1,p_2)-\frac{3}{2} \epsilon_0 - K'_2.
\end{align*}
\end{proof}

\subsubsection{Quasi-isometric lengths along the path}

Let $\beta \subset U$ be a path joining $b_L$ to $b_R$.
Let $\widetilde{\beta} \subset \Hyperbolic^2$ be a lift of $\beta$ joining $\widetilde{b_L}$ to $\widetilde{b_R}$.
Set $q_L = F(b_L)$,
$q_R = F(b_R)$,
$\widetilde{q_L} = \widetilde{F}(\widetilde{b_L})$ and $\widetilde{q_R} = \widetilde{F}(\widetilde{b_R})$.
Consider the hyperbolic geodesic $\beta' \subset B$ rel isotopic to $\beta$.

We prove that the lengths $l_U(\beta)$ and $l_{\Moduli_h}(F(\beta))$,
as well as the distance
$d_{\Teich_h}(\widetilde{q_L},\widetilde{q_R})$,
are quasi-isometric.

\begin{proposition}
\label{proposition::cusp-quasi-isom}
If the differentiable map $F$ is $1$-shrinking, then there exist constants $K_3(F,\epsilon_0) > 0$ and $K_4(F,\epsilon_0)$,
depending only on $F$ and $\epsilon_0$,
such that
\begin{equation*}
\frac{1}{2} l_U(\beta) \ge l_{\Moduli_h} \big( F(\beta) \big)
\ge \dtei{h}( \widetilde{F}(\widetilde{b_L}), \widetilde{F}(\widetilde{b_R}) )
\ge \frac{1}{K_3(F,\epsilon_0)} l_U(\beta') - K_4(F,\epsilon_0).
\end{equation*}
\end{proposition}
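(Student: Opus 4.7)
The plan is to prove the three inequalities in the chain separately. The first two are routine: $\tfrac{1}{2} l_U(\beta) \ge l_{\Moduli_h}(F(\beta))$ integrates the pointwise $1$-shrinking condition along $\beta$, while $l_{\Moduli_h}(F(\beta)) \ge \dtei{h}(\widetilde{F}(\widetilde{b_L}), \widetilde{F}(\widetilde{b_R}))$ follows because the length of $F(\beta)$ in the quotient Finsler metric on $\Moduli_h$ equals the length of the lifted path $\widetilde{F} \circ \widetilde{\beta}$ in $\Teich_h$, which bounds the Teichm\"uller distance between its endpoints.

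The substance lies in the third inequality. Writing
\[
\Delta := \max\bigl\{ |\log(\epsilon_L/\epsilon_R)|,\ \log(w_{\beta'} \epsilon_L),\ \log(w_{\beta'} \epsilon_R) \bigr\},
\]
Lemma \ref{lemma::hyperbolic_length_cusp} gives $l_U(\beta') \le T_2 \Delta$ when $l_U(\beta') \ge 5$; the short-path case is absorbed into $K_4$. Modelling $U$ as $\{y \ge 1\}/(z \mapsto z+2)$, I write $\widetilde{b_L} = x_L + i y_L$ and $\widetilde{b_R} = x_R + i y_R$. The $|\log(\epsilon_L/\epsilon_R)|$ contribution is handled by Wolpert's inequality applied to $L_{\alpha_1}$, combined with the bi-Lipschitz estimate $L_{\alpha_1}(\widetilde{F}(\widetilde{b})) \asymp \epsilon_U(b)$ from Proposition \ref{proposition::l_epsilon} (available because $\lambda_0 = 1$):
\[
\dtei{h}(\widetilde{F}(\widetilde{b_L}), \widetilde{F}(\widetilde{b_R})) \ge \tfrac{1}{2} |\log(\epsilon_L/\epsilon_R)| - \log K^+(F,\epsilon_0).
\]

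For the winding contribution $\log(w_{\beta'} \epsilon_L)$, pick $w \in \Z$ with $|x_L - x_R + 2w| \le 1$, so that $|w|$ differs from $w_{\beta'}$ by at most a bounded constant, and set $\widetilde{b_R}^* := (x_R - 2w) + i y_R$. Equivariance gives $\widetilde{F}(\widetilde{b_R}) = \phi^w \cdot \widetilde{F}(\widetilde{b_R}^*)$. Writing $w = k\mu + s$ with $|s| < \mu$, so $\phi^w = T^k \phi^s$, the triangle inequality yields
\[
\dtei{h}(\widetilde{F}(\widetilde{b_L}), \widetilde{F}(\widetilde{b_R})) \ge \dtei{h}(\widetilde{F}(\widetilde{b_L}), T^k \cdot \widetilde{F}(\widetilde{b_L})) - \dtei{h}(\widetilde{F}(\widetilde{b_L}), \widetilde{F}([\partial U]^s \cdot \widetilde{b_R}^*)).
\]
The subtracted distance is at most $\tfrac{1}{2} \dhyp(\widetilde{b_L}, [\partial U]^s \cdot \widetilde{b_R}^*) \le \tfrac{1}{2}|\log(\epsilon_L/\epsilon_R)| + C(\mu,\epsilon_0)$ by $1$-shrinking, since the horizontal displacement is at most $1 + 2\mu$. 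The leading distance is bounded below by Lemma \ref{lemma::teich_le_log(rl)}(a): provided $k \epsilon_L \ge K^+ \pi^2$ (the opposite case forces $\log(w_{\beta'} \epsilon_L)$ uniformly bounded and is absorbed into $K_4$),
\[
\dtei{h}(\widetilde{F}(\widetilde{b_L}), T^k \cdot \widetilde{F}(\widetilde{b_L})) \ge \tfrac{1}{4} \log(k L_{\alpha_1}(\widetilde{F}(\widetilde{b_L}))) \ge \tfrac{1}{4} \log(k \epsilon_L / K^+),
\]
which, since $k \ge w_{\beta'}/\mu - O(1)$, gives $\tfrac{1}{4} \log(w_{\beta'} \epsilon_L) - C$. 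Adding this to the Wolpert bound cancels the $|\log(\epsilon_L/\epsilon_R)|$ correction and yields $\dtei{h} \ge \tfrac{1}{8} \log(w_{\beta'} \epsilon_L) - C'$. The symmetric argument with the roles of $\widetilde{b_L}$ and $\widetilde{b_R}$ exchanged yields the $\log(w_{\beta'} \epsilon_R)$ bound.

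The main obstacle is the interleaving of the height-based Wolpert bound and the winding-based multi-twist bound: because the triangle-inequality subtraction injects a $|\log(\epsilon_L/\epsilon_R)|$ correction into the winding estimate, one must cancel it against an independent Wolpert bound to isolate the full $\log(w_{\beta'} \epsilon_L)$ term. All additive constants --- from rounding $w$, from the bounded $\phi^s$ correction, from the thresholds in Lemma \ref{lemma::teich_le_log(rl)}(a), and from the short-path case $l_U(\beta') < 5$ --- depend only on $F$ and $\epsilon_0$, because $\mu$ and the twist exponents $r_i$ are invariants of $F$'s peripheral monodromy and $K^+(F,\epsilon_0)$ is already uniform in $F$ and $\epsilon_0$.
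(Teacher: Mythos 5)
Your proposal is correct and follows essentially the same route as the paper: reduce via Lemma \ref{lemma::hyperbolic_length_cusp} to bounding $\dtei{h}(\widetilde{q_L},\widetilde{q_R})$ below by the height term (Wolpert plus the bi-Lipschitz estimate of Proposition \ref{proposition::l_epsilon}) and by the winding term (triangle inequality against a deck-translated lift, with the multi-twist power controlled by Lemma \ref{lemma::teich_le_log(rl)}(a)), absorbing the bounded cases into $K_4$. The only cosmetic differences are that the paper invokes Proposition \ref{proposition::qie-cusp} for the height bound and finishes with a case split on which of the two terms dominates, whereas you sum the two lower bounds to cancel the $|\log(\epsilon_L/\epsilon_R)|$ correction directly.
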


\begin{proof}
Set $\epsilon_L=\epsilon_U(b_L)$ and $\epsilon_R=\epsilon_U(b_R)$.
Without loss of generality, we assume that $\epsilon_L \ge \epsilon_R$.
The first two desired inequalities are known, as $F$ is $1$-shrinking.
We also know that $(1/2) l_U(\beta') \ge \dtei{h}(\widetilde{q_L},\widetilde{q_R})$.
To prove the third desired inequality, it suffices to show that either $(1/2)l_U(\beta')$ is bounded from above
or, by Lemma \ref{lemma::hyperbolic_length_cusp},
we have $\dtei{h}(\widetilde{q_L},\widetilde{q_R})$ is quasi-bounded below by both of $\left|\log{ \frac{\epsilon_L}{\epsilon_R}}\right|$ and $\log (\omega_\beta \epsilon_L)$.

By Proposition \ref{proposition::qie-cusp}, we first get
\begin{align}
\dtei{h}(\widetilde{q_L}, \widetilde{q_R})
\ge
\dm{h}(q_L,q_R)
&\ge
\frac{1}{K_1(F,\epsilon_0)} d_U(b_1,b_2) - K_2(F,\epsilon_0) 
\nonumber \\
&\ge
\frac{1}{K_1(F,\epsilon_0)}
\left(
\left|\log{ \frac{\epsilon_L}{\epsilon_R}}\right| - \epsilon_0
\right)
- K_2(F,\epsilon_0).
\label{equation::dt-log(eL/eR)}
\end{align}

It remains to show that $\dtei{h}(\widetilde{q_L},\widetilde{q_R})$ is quasi-bounded below by $\log (\omega_\beta \epsilon_L)$,
so we further assume that
$\log (\omega_\beta \epsilon_L) \ge \left| \log \epsilon_L/\epsilon_R \right|$ and
$\omega_\beta \ge 1$.
Then, there exists $\widetilde{b_M} \in \Hyperbolic^2$,
which is another lift of $b_L$, such that
$\widetilde{q_M} = (\phi^\mu)^t \cdot \widetilde{q_L}$ and
$\dtei{h}(\widetilde{q_M},\widetilde{q_R}) \le \frac{1}{2}
\left(
\left|\log{ \frac{\epsilon_L}{\epsilon_R}}\right| + \mu\epsilon_0
\right)$,
where $\widetilde{q_M} = \widetilde{F}(\widetilde{b_M})$ and $t\in \Z$.
If $|t| \le 2$, then
$\omega_\beta \cdot \epsilon_L \le 3\mu \cdot \epsilon_0 \le 6\mu$
and thus
$l_U(\beta') \le
T_2 \cdot \log (\omega_\beta \epsilon_L) \le
T_2 \cdot \log (6 \mu)$,
where $T_2$ is the constant introduced in Lemma \ref{lemma::hyperbolic_length_cusp}.

Assume that $|t| \ge 3$.
Then $|t| \cdot \mu \ge (1/2) \omega_\beta$.
By Proposition \ref{proposition::l_epsilon}, we further conclude with $\max_i \{ |t| \cdot |r_i| \cdot L_{\alpha_i}(\widetilde{q_L}) \}
\ge \frac{1}{2\mu \cdot K^+(F,\epsilon_0)} \max_i \{|r_i|\} \cdot \omega_\beta \epsilon_L$.
Taking $T = (\phi^\mu)^t = \prod_i T_{\alpha_i}^{t\cdot r_i}$,
by Lemma \ref{lemma::teich_le_log(rl)} (a),
we obtain that
either $\max_i \{ |t| \cdot |r_i| \cdot L_{\alpha_i}(\widetilde{q_L}) \} \le \pi^2$, hence
\begin{equation*}
l_U(\beta')
\le
T_2 \cdot \log (\omega_\beta \epsilon_L)
\le
T_2 \cdot \log \left(
\pi^2 \cdot 2\mu \cdot K^+(F,\epsilon_0) \cdot \frac{1}{\max\{|r_i|\}}
\right)
\end{equation*}
or
\begin{align}
\dtei{h}(\widetilde{q_L}, \widetilde{q_R})
&\ge
\dtei{h}(\widetilde{q_L}, \widetilde{q_M}) -
\dtei{h}(\widetilde{q_M}, \widetilde{q_R})
\ge
\dtei{h}(\widetilde{q_L}, \widetilde{q_M}) -
\frac{1}{2} \left(
\left|\log{ \frac{\epsilon_L}{\epsilon_R}}\right| + \mu\epsilon_0 
\right)
\nonumber \\
&\ge \frac{1}{4} \log
\max_i \left\{
|t| \cdot |r_i| \cdot L_{\alpha_i}(\widetilde{q_L})
\right\} -
\frac{1}{2} \left(
\left|\log{ \frac{\epsilon_L}{\epsilon_R}}\right| + \mu\epsilon_0
\right)
\nonumber \\
&\ge \frac{1}{4} \log
\left(
\frac{1}{2\mu \cdot K^+(F,\epsilon_0)}
\max_i \left\{|r_i|\right\}
\cdot \omega_\beta \epsilon_L
\right) -
\frac{1}{2} \left(
\left|\log{ \frac{\epsilon_L}{\epsilon_R}}\right| + \mu\epsilon_0
\right).
\label{equation::dt-we-log(eL/eR)}
\end{align}

Write the inequality (\ref{equation::dt-we-log(eL/eR)}) as
$\dtei{h}(\widetilde{q_L},\widetilde{q_R}) \ge \frac{1}{K'_3} \cdot \log (\omega_\beta \epsilon_L) - 
\frac{1}{2} \left|\log{ \frac{\epsilon_L}{\epsilon_R}}\right| - K'_4$
with some constants $K'_3 > 0$ and $K'_4$.
If $\left|\log{ \frac{\epsilon_L}{\epsilon_R}}\right| \le (1/K'_3) \cdot \log (\omega_\beta \epsilon_L)$,
then $\dtei{h}(\widetilde{q_L},\widetilde{q_R}) \ge \frac{1}{2 K'_3} \log (\omega_\beta \epsilon_L) - K'_4$.
Otherwise, by the inequality (\ref{equation::dt-log(eL/eR)}), we get
\begin{equation*}
\dtei{h}(\widetilde{q_L},\widetilde{q_R})
\ge \frac{1}{K_1(F,\epsilon_0)}
\left(
\left|
\log \frac{\epsilon_L}{\epsilon_R}
\right|
- \epsilon_0
\right)
- K_2(F,\epsilon_0)
\ge
\frac{1}{K_1(F,\epsilon_0)}
\left(
\frac{1}{K'_3}
\log
(\omega_\beta \epsilon_L)
- \epsilon_0
\right)
- K_2(F,\epsilon_0).
\end{equation*}
\end{proof}

\subsubsection{Bi-Lipschitz lengths along the path joining boundary points}

Consider $\beta \subset U$ as before.

When the path $\beta\subset U$ is joining two boundary points $b_L,b_R\in \partial U$,
the lengths $l_U(\beta')$ and $l_{\Moduli_h}(F(\beta))$ 
further have a bi-Lipschitz relation,
unless $\beta$ is away from an explicit smaller cusp region.
More precisely, we have the following.

\begin{proposition}
\label{proposition::cusp-bi-lipschitz}
Suppose that $b_L\in \partial U$ and $b_R \in \partial U$.
If the differentiable map $F$ is $1$-shrinking,
then there exist constants $K_5(F,\epsilon_0) > 0$ and $0 < \epsilon^+(F,\epsilon_0) \le \epsilon_0$,
depending only on $F$ and $\epsilon_0$,
such that
either
\begin{equation*}
\beta \subset \bigsqcup_{\epsilon^+(F,\epsilon_0) \le \epsilon \le \epsilon_0} H_{U,\epsilon}
\quad
\text{or}
\quad
\frac{1}{2} l_U(\beta) \ge l_{\Moduli_h} \big( F(\beta) \big)
\ge \frac{1}{K_5(F,\epsilon_0)} l_U(\beta').
\end{equation*}
\end{proposition}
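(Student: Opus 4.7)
The plan is to refine the quasi-isometric bound of Proposition \ref{proposition::cusp-quasi-isom} by trading the additive error against a geometric dichotomy on where $\beta$ sits inside the cusp region. The upper bound $\tfrac{1}{2} l_U(\beta) \ge l_{\Moduli_h}(F(\beta))$ is immediate from the $1$-shrinking hypothesis, so the content lies in establishing the lower bound $l_{\Moduli_h}(F(\beta)) \ge \tfrac{1}{K_5} l_U(\beta')$. A key simplification is that because both $b_L$ and $b_R$ lie on $\partial U$, we have $\epsilon_L = \epsilon_R = \epsilon_0$, so Lemma \ref{lemma::hyperbolic_length_cusp} reduces to $l_U(\beta') \asymp \log(\omega_\beta \epsilon_0)$; in particular a bound on $l_U(\beta')$ is equivalent to a bound on the rounded winding number $\omega_\beta$.

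First I would split the analysis according to whether $l_U(\beta') \ge L_0$ or $l_U(\beta') < L_0$ for a threshold $L_0 \coloneqq 2 K_3 K_4$ (with $K_3, K_4$ as in Proposition \ref{proposition::cusp-quasi-isom}). In the first regime, Proposition \ref{proposition::cusp-quasi-isom} directly yields
\[
l_{\Moduli_h}(F(\beta)) \ge \dtei{h}(\widetilde{q_L}, \widetilde{q_R}) \ge \tfrac{1}{K_3} l_U(\beta') - K_4 \ge \tfrac{1}{2K_3} l_U(\beta'),
\]
so the bi-Lipschitz conclusion holds with $K_5 \coloneqq 2 K_3$. In the second regime, the boundedness of $l_U(\beta')$ means the desired lower bound $\tfrac{1}{K_5} l_U(\beta')$ is also bounded, and the issue becomes producing a uniform positive lower bound on $l_{\Moduli_h}(F(\beta))$ when $\beta$ is forced to dip deeply.

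This is where the dichotomy enters: I would choose $\epsilon^+ = \epsilon^+(F,\epsilon_0)$ small enough and consider whether $\beta \subset \bigsqcup_{\epsilon^+ \le \epsilon \le \epsilon_0} H_{U,\epsilon}$. If so, the first alternative is satisfied and we are done. Otherwise, pick a point $b_* \in \beta$ with $\epsilon_U(b_*) < \epsilon^+$ and split $\beta$ at $b_*$ into sub-paths $\beta_1$ (from $b_L$ to $b_*$) and $\beta_2$ (from $b_*$ to $b_R$). Applying Proposition \ref{proposition::cusp-quasi-isom} to each sub-path, using $d_U(b_L, b_*), d_U(b_*, b_R) \ge \log(\epsilon_0/\epsilon^+)$, gives
\[
l_{\Moduli_h}(F(\beta_i)) \ge \dm{h}(F(b_L), F(b_*)) \ge \tfrac{1}{K_1} \log(\epsilon_0/\epsilon^+) - K_2,
\]
so by additivity of length along the concatenation,
\[
l_{\Moduli_h}(F(\beta)) \ge \tfrac{2}{K_1} \log(\epsilon_0/\epsilon^+) - 2 K_2.
\]
Choosing $\epsilon^+$ so that the right-hand side exceeds $L_0/(2K_3)$, which amounts to $\log(\epsilon_0/\epsilon^+) \ge K_1 K_2 + K_1 L_0/(4 K_3)$, ensures $l_{\Moduli_h}(F(\beta)) \ge \tfrac{1}{K_5} l_U(\beta')$ in this remaining case, completing the proof with the same $K_5 = 2 K_3$.

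The main technical point is bookkeeping: one must verify that this $\epsilon^+$ depends only on $F$ and $\epsilon_0$ (which it does, through the constants $K_1, K_2, K_3, K_4$ supplied by Proposition \ref{proposition::cusp-quasi-isom}). I do not anticipate a deeper obstacle, since the cusp-splitting trick in the deep-excursion case cleanly converts the radial depth of $\beta$ in $U$ into a comparable length in $\Moduli_h$ via two applications of the already-proved quasi-isometric bound, bypassing any need for additional control on the winding of $\beta$.
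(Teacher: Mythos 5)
Your proposal is correct and follows essentially the same route as the paper: both set a threshold $L_0 \asymp K_3K_4$ on $l_U(\beta')$ so that Proposition \ref{proposition::cusp-quasi-isom} absorbs its additive error above the threshold, and below it both convert a deep excursion (some $b_*$ with $\epsilon_U(b_*)<\epsilon^+$) into a uniform lower bound on $l_{\Moduli_h}(F(\beta))$ via the cusp estimate $\dm{h}(F(b_L),F(b_*))\ge \frac{1}{K_1}d_U(b_L,b_*)-K_2$, choosing $\epsilon^+$ to beat the threshold. The only quibble is a citation slip: that last inequality (with constants $K_1,K_2$) is Proposition \ref{proposition::qie-cusp}, not Proposition \ref{proposition::cusp-quasi-isom}; otherwise the argument, including the bookkeeping showing $\epsilon^+$ and $K_5$ depend only on $F$ and $\epsilon_0$, is sound.
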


\begin{proof}
By Proposition \ref{proposition::cusp-quasi-isom},
there exist constants $K'_3(F,\epsilon_0)>0$ and $K'_4(F,\epsilon_0)$ depending only on $F$ and $\epsilon_0$, such that either $l_U(\beta') \le K'_4(F,\epsilon_0)$
or $(1/2) l_U(\beta) \ge l_{\Moduli_h}(F(\beta)) \ge (1/K'_3(F,\epsilon_0)) l_U(\beta')$.

Set $\epsilon_{\text{min}} \coloneqq \inf_{b \in \beta} \epsilon_U(b) \le \epsilon_0$
and suppose that $\epsilon_U(b_{\text{min}}) = \epsilon_{\text{min}}$ for some $b_{\text{min}} \in \beta$.
By Proposition \ref{proposition::qie-cusp}, we get
\begin{align*}
l_{\Moduli_h}(F(\beta))
\ge
d_{\Moduli_h}(F(b_L),F(b_{\text{min}}))
&\ge
\frac{1}{K_1(F,\epsilon_0)} d_U(b_L,b_{\text{min}}) - K_2(F,\epsilon_0) \\
&\ge
\frac{1}{K_1(F,\epsilon_0)}\left(
\log \frac{\epsilon_0}{\epsilon_{\text{min}}} - 1
\right)
- K_2(F,\epsilon_0).
\end{align*}
Thus, there exists a sufficiently small $\epsilon^+(F,\epsilon_0)$ such that
\begin{equation*}
\frac{1}{K_1(F,\epsilon_0)}
\left(
\log{\frac{\epsilon_0}{\epsilon}} - 1
\right)
- K_2(F,\epsilon_0)
> K'_4(F,\epsilon_0)
\end{equation*}
for every $0 < \epsilon < \epsilon^+(F,\epsilon_0)$.

If $\epsilon_{\text{min}} < \epsilon^+(F,\epsilon_0)$, then $l_{\Moduli_h}(F(\beta)) > K'_4(F,\epsilon_0)$.
Hence, either $l_{\Moduli_h}(F(\beta)) > K'_4(F,\epsilon_0) \ge l_U(\beta')$
or $l_{\Moduli_h}(F(\beta)) \ge (1/K'_3(F,\epsilon_0)) l_U(\beta')$.
We conclude that
either $\beta \subset \bigsqcup_{\epsilon^+(F,\epsilon_0) \le \epsilon \le \epsilon_0} H_{U,\epsilon}$
or
$l_{\Moduli_h}(F(\beta)) \ge \min\{1, 1/K'_3(F,\epsilon_0)\} \cdot l_U(\beta')$.
\end{proof}

\section{Conclusions}
\label{section::conclusion}

\subsection{Paths in the compact region}
\label{subsection::compact_region}

This subsection consider the restriction of a $1$-shrinking differentiable map $F:B\rightarrow \Moduli_h$ to some compact part $B_{\text{cp}}(\boldsymbol{\epsilon})$ of $B$ by removing a cusp region for each cusp point.
More precisely, we have the following.

\begin{proposition}
\label{proposition::bilipschitz-path-Bcp}
Let $F:B\rightarrow \Moduli_h$ be a $1$-shrinking differentiable map
such that the set of singular points $\Sing(F) \subset B$ is discrete and has no accumulation point.
Let $B_{\text{cp}}(\boldsymbol{\epsilon}) \subset B$ with $\boldsymbol{\epsilon} = (\epsilon_1,\ldots,\epsilon_n)$ 
be the subsurface obtained by removing,
for each $i=1,\ldots,n$,
the cusp region of the $i$-th cusp point bounded by a horecycle of length $0 < \epsilon_i \le 2$.
Then the following hold::

\begin{enumerate}[label=(\alph*).] 
\item The intersection $\Sing(F) \cap B_{\text{cp}}(\boldsymbol{\epsilon})$ is a finite set, denoted by $\{p_1,p_2,\ldots,p_k\}$.

\item There exists $r > 0$ such that the hyperbolic neighbourhoods $U(p_i,r)$ of the point $p_i$ are are pairwise disjoint for $i=1,2,\ldots,k$.

\item For \( r > 0 \) as in (b),
there exists constants $K_7(F,\boldsymbol{\epsilon},r)>0$
and $K_8(F,\boldsymbol{\epsilon},r)$,
depending only on $F$, $\boldsymbol{\epsilon}$ and $r$, such that
for every path \( \beta \subset B_{\text{cp}}(\boldsymbol{\epsilon}) \), we have
\begin{equation*}
\frac{1}{2} l_{B}(\beta)
\ge
l_{\Moduli_h}(F(\beta)) \ge 
\frac{1}{K_7(F,\boldsymbol{\epsilon})} l_{B}(\beta')
-
K_8(F,\boldsymbol{\epsilon}),
\end{equation*}
where $\beta'$ is the geodesic rel isotopic to $\beta$.
Moreover, for every path \( \beta \subset B_{\text{cp}}(\boldsymbol{\epsilon}) \) connecting two points in the closure \( \overline{B_{\text{cp}}(\boldsymbol{\epsilon}) \setminus \bigsqcup_i U(p_i, r)} \), we have
\begin{equation*}
\frac{1}{2} l_{B}(\beta)
\ge
l_{\Moduli_h}(F(\beta)) \ge 
\frac{1}{K_7(F,\boldsymbol{\epsilon})} l_{B}(\beta').
\end{equation*}
\end{enumerate}
\end{proposition}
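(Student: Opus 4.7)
Parts (a) and (b) are essentially point-set topology. For (a), since $\Sing(F)$ is discrete with no accumulation point in $B$ and $B_{\text{cp}}(\boldsymbol{\epsilon})$ is compact, the intersection $\Sing(F) \cap B_{\text{cp}}(\boldsymbol{\epsilon})$ is a discrete subset of a compact set, hence finite. For (b), given the finitely many points $p_1, \ldots, p_k$, it suffices to choose any $r$ smaller than half the minimum pairwise distance (or any $r > 0$ if $k \le 1$).

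The upper bound in (c) is immediate: the $1$-shrinking hypothesis gives $\Kob_{\Teich_h}(\widetilde{F}(\widetilde{b}), d\widetilde{F}(\widetilde{v})) \le \Kob_{\Hyperbolic^2}(\widetilde{b},\widetilde{v}) = \tfrac{1}{2}\,\|\widetilde{v}\|_{\mathrm{hyp}}$ pointwise, and integrating along $\beta$ yields $l_{\Moduli_h}(F(\beta)) \le \tfrac{1}{2} l_B(\beta)$.

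The core of the proof is a compactness argument for the lower bound. Set
\[
K \coloneqq \overline{B_{\text{cp}}(\boldsymbol{\epsilon}) \setminus \bigsqcup_{i=1}^{k} U(p_i, r)}.
\]
Then $K$ is compact and, by construction, disjoint from $\Sing(F)$. Therefore $d\widetilde{F}$ has real rank $2$ at every point above $K$. Since $(\widetilde{b},\widetilde{v}) \mapsto \Kob_{\Teich_h}(\widetilde{F}(\widetilde{b}), d\widetilde{F}(\widetilde{v})) / \|\widetilde{v}\|_{\mathrm{hyp}}$ is continuous and strictly positive on the unit tangent bundle of $K$, compactness yields a constant $c = c(F, \boldsymbol{\epsilon}, r) > 0$ with
\[
F^* \Kob_{\Moduli_h}(b, v) \;\ge\; c \cdot \Kob_B(b, v) \qquad \text{for every } b \in K \text{ and } v \in T_b K.
\]
Integrating this pointwise inequality along any path $\beta \subset K$ gives $l_{\Moduli_h}(F(\beta)) \ge c \cdot l_B(\beta) \ge c \cdot l_B(\beta')$, the last step using that $\beta'$, being the shortest representative of its rel-endpoint homotopy class, satisfies $l_B(\beta') \le l_B(\beta)$. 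This will be the engine behind both estimates in (c), with $K_7 \coloneqq 1/c$.

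To extend this to paths that wander into $\bigsqcup_i U(p_i,r)$ (both for the bi-Lipschitz statement with endpoints in $\overline{K}$ and the coarse statement for general paths in $B_{\text{cp}}$), I would decompose $\beta$ into maximal sub-arcs alternating between $K$ and $\bigsqcup_i U(p_i,r)$. On the in-$K$ sub-arcs the pointwise bound applies directly. For the in-$U(p_i,r)$ sub-arcs, each $U(p_i,r)$ is a simply connected hyperbolic disc of diameter $2r$ (for $r$ as in (b), possibly after shrinking), so any excursion of $\beta$ into $U(p_i,r)$ is null-homotopic rel its entry/exit points on $\partial U(p_i,r)$; consequently $\beta$ is rel-endpoint homotopic in $B_{\text{cp}}$ to a path $\widehat{\beta}$ whose sub-arcs across each $U(p_i,r)$ are the $\partial U(p_i,r)$-geodesic chords replacing the excursions. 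For the bi-Lipschitz statement, the endpoints lie in $\overline{K}$, so $\widehat{\beta} \subset K$ and the in-$K$ estimate applies to $\widehat{\beta}$, while $\widehat{\beta}$ has the same rel-endpoint geodesic representative $\beta'$; the image lengths of $\beta$ and $\widehat{\beta}$ differ by the sum of diameters of finitely many image discs $F(U(p_i,r))$, which can be absorbed into the multiplicative constant after shrinking $c$. For the coarse statement, the endpoints themselves may lie in $\bigsqcup_i U(p_i,r)$, contributing a bounded additive error $K_8$ from the endpoint excursions.

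The main obstacle, as expected, is controlling the contribution of the singular-neighbourhood crossings in the coarse estimate. The key point is that $\beta'$, being a geodesic, contributes at most $2r$ of length per crossing, and the number of crossings is controlled by the geodesic structure (entries into a fixed ball must be separated by a uniform positive distance depending on the injectivity radius of $K$), so the \emph{ratio} of in-ball length to total length of $\beta'$ is bounded above. This ensures that the bi-Lipschitz bound on the $K$-portion dominates up to an additive constant depending only on $F$, $\boldsymbol{\epsilon}$ and $r$, finishing the proof with explicit $K_7, K_8$.
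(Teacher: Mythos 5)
Parts (a) and (b), the upper bound in (c), and the compactness argument producing a constant $c>0$ with $l_{\Moduli_h}(F(\gamma)) \ge c\, l_B(\gamma)$ for paths $\gamma$ avoiding neighbourhoods of the singular points are all correct and coincide with the paper's core idea. The gap is in how you pass from this to a lower bound in terms of $l_B(\beta')$ once $\beta$ enters the balls. Your replacement path $\widehat{\beta}$ gives a lower bound for $l_{\Moduli_h}(F(\widehat{\beta}))$, but the quantity to be bounded is $l_{\Moduli_h}(F(\beta))$, and the claim that the two ``differ by the sum of diameters of finitely many image discs \dots absorbed into the multiplicative constant'' does not hold: the number of excursions of an arbitrary path into $\bigsqcup_i U(p_i,r)$ is unbounded, and even for a single excursion the inequality points the wrong way --- you would need the image length of the replacement arcs to be dominated by a multiple of $l_{\Moduli_h}(F(\beta))$ or of $l_B(\beta')$, which is not established. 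The concrete failure is the additive-constant-free ``moreover'' statement: take $\beta$ to be a diameter of $U(p_1,r)$ through the singular point $p_1$, with endpoints on $\partial U(p_1,r)$, which lie in $\overline{B_{\text{cp}}(\boldsymbol{\epsilon})\setminus\bigsqcup_i U(p_i,r)}$. Then $\beta\cap K$ has length zero while $l_B(\beta')=2r>0$, so your pointwise estimate on $K$ yields only $l_{\Moduli_h}(F(\beta))\ge 0$, and since $d\widetilde F$ may be arbitrarily small throughout $U(p_1,r)$, no homotopy bookkeeping with $\widehat\beta$ recovers a multiplicative bound. (Your closing remark that the in-ball proportion of $\beta'$ is controlled concerns $\beta'$, whereas $l_{\Moduli_h}(F(\beta))$ is computed along $\beta$; it does not repair this.)

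The paper avoids the problem by working at two scales: the compactness constant is taken on $K_r=\overline{B_{\text{cp}}(\boldsymbol{\epsilon})\setminus\bigcup_i U(p_i,r/3)}$, and the path is modified only inside the inner balls $U(p_i,r/3)$ and the annuli $U(p_i,r)\setminus\overline{U(p_i,r/3)}$. Any arc that reaches an inner ball must cross the annulus, which lies in $K_r$, so each such crossing forcibly contributes image length at least $c\cdot(2r/3)$, while the corresponding shortcut contributes at most $2r/3$ to $l_B(\beta')$; hence the ``good'' portion always accounts for a definite fraction of $l_B(\beta'')\ge l_B(\beta')$, with the endpoint configurations producing either the clean multiplicative bound or the additive constant $K_8$. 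Your coarse estimate (with additive constant) could likely be completed along your lines by counting super-excursions using the positive gap between distinct lifts of the balls, but the bi-Lipschitz ``moreover'' part genuinely requires extracting image length from the crossings themselves, which a single-scale decomposition cannot do.
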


\begin{proof}
The assertions (a) and (b) follow immediately.

Consider the path $\beta \subset B_{\text{cp}}(\boldsymbol{\epsilon})$.
Let $\beta''$ be obtained from $\beta$ by straightening each subarc that locates within some $U(p_i,r/3)$
and replacig each subarc that locates within some annulus $U(p_i,r) \setminus \overline{U(p_i,r/3)}$ with one shortest rel isotopic path within this annulus.
Let $\beta'$ be the geodesic rel isotopic to $\beta$.

Let $K_r$ be the closure 
$\overline{ B_{\text{cp}}(\boldsymbol{\epsilon}) \setminus \bigcup_{i=1}^k U(p_i,r/3) }$.
As $K_r$ is compact, we set
\begin{equation*}
K_7(F,\boldsymbol{\epsilon},r)^{-1} \coloneqq
\inf_{b\in K_r}
\inf_{v\in T_b^1 B}
\left\{
\frac{\Kob_{\Teich_h}(\widetilde{F}(\widetilde{b}), d\widetilde{F}(\widetilde{v})) }{ \Kob_{\Hyperbolic^2}(\widetilde{b},\widetilde{v})}
\right\}
> 0,
\end{equation*}
where the selection of the lift $(\widetilde{b},\widetilde{v})$ of $(b,v)$ does not change the value in the bracket.

If $l_B(\beta'') \ge (4/3) \cdot r$ or $\beta$ connects two points away from each $U(p_i,r)$, then
$l_B(\beta'' \cap K_r) \ge (1/2) l_B(\beta'')$.
Hence, we get
\begin{align*}
l_{\Moduli_h}\big( F(\beta) \big)
&\ge
l_{\Moduli_h}\left(
F(\beta \cap K_r)
\right)
\ge
\frac{1}{K_7(F,\boldsymbol{\epsilon},r)} \cdot
l_B(\beta \cap K_r) \\
&\ge
\frac{1}{K_7(F,\boldsymbol{\epsilon},r)} \cdot
l_B(\beta'' \cap K_r)
\ge
\frac{1}{2 K_7(F,\boldsymbol{\epsilon},r)} \cdot
l_B(\beta'')
\ge
\frac{1}{2 K_7(F,\boldsymbol{\epsilon},r)} \cdot
l_B(\beta').
\end{align*}
Otherwise, we get $l_B(\beta') \le l_B(\beta'') \le (4/3) \cdot r$.
Thus, we get the desired inequalities.
\end{proof}

\subsection{Shrinking maps are quasi-isometric immersions}
\label{subsection::qii}

\begin{proof}[Proof of Theorem \ref{thmx::shrinking} (a)]
This comes from Proposition \ref{proposition::qie-cusp} and the fact that $B_{\text{cp}}$ has bounded diameter.
\end{proof}

\begin{proof}[Proof of Theorem \ref{thmx::qii}]
By Proposition \ref{proposition::torsion-peripheral-qii},
the holomorphic map $F$ does not satisfy $\qii(F)$ if some peripheral monodromy has finite order.
This theorem comes from Theorem \ref{thmx::shrinking} (a).
\end{proof}

\subsection{Shrinking maps have quasi-isometric immersed lifts}
\label{subsection::qiit}

\begin{proof}[Proof of Theorem \ref{thmx::shrinking} (b)]
Suppose that all peripheral monodromies are of infinite order.
Let $\beta\subset B$ be an arbitrary path joining $b_L$ to $b_R$.

Let $0 < \epsilon_0 \le 2$ be such that $\Sing(F)$ does not intersect with the boundary of the cusp region $U_i(\epsilon_0)$ of the $i$-th cusp point, where $l_B(\partial U_i(\epsilon_0)) = \epsilon_0$, for each $i=1,\ldots, n$.
By Proposition \ref{proposition::bilipschitz-path-Bcp} (a), the intersection $\Sing(F) \cap B_{\text{cp}}(\epsilon_0,\ldots,\epsilon_0) = \{p_1,\ldots,p_k\}$ is finite.
Take $0 < \epsilon^+ \le \epsilon_0$ as in Proposition \ref{proposition::cusp-bi-lipschitz}.
Let $r>0$ be such that the hyperbolic neighbourhodds $U(p_i,r)$ are disjoint and away from each of the cusp regions $U_i(\epsilon_0)$.

We decompose $\beta$ into subarcs as follows.
First, we apply the following procedure repeatedly.
If there exists a subarc $\delta \subset U_i(\epsilon_0)$ of $\beta$ such that
$\delta$ joins two boundary points on $\partial U_i(\epsilon_0)$ and
$\delta \not\subset \bigsqcup\limits_{\epsilon^+ \le \epsilon \le  \epsilon_0} H_{U_i(\epsilon_0),\epsilon}$,
then we remove $\delta$ from $\beta$ and, by Proposition \ref{proposition::cusp-bi-lipschitz}, we have
\begin{equation*}
\frac{1}{2} l_U(\delta)
\ge l_{\Moduli_h}(F(\delta))
\ge \frac{1}{K_5(F,\epsilon_0)} l_U(\delta'),
\end{equation*}
where $\delta'$ is the hyperbolic geodesic rel isotopic to $\delta$.

The resulting $\beta$ is the disjoint union of several arcs.
Except the first and the last subarcs, every subarc $\sigma$ is a path in $B_{\text{cp}}(\epsilon^+,\ldots,\epsilon^+)$
joining $b_L \in \partial U_i(\epsilon_0)$ to $b_R \in \partial U_j(\epsilon_0)$, for some $1\le i,j \le n$.
By Proposition \ref{proposition::bilipschitz-path-Bcp} (c), we get
\begin{equation*}
\frac{1}{2} l_U(\sigma)
\ge l_{\Moduli_h}(F(\sigma))
\ge \frac{1}{K_7(F,\epsilon^+)} l_B(\sigma'),
\end{equation*}
where $\sigma'$ is the hyperbolic geodesic rel isotopic to $\sigma$.

Each of the remaining at most two arcs can be decomposed into at most three subarcs, where each subarc is either within some cusp region $U_i(\epsilon_0)$ or within the compact region $B_{\text{cp}}(\epsilon^+,\ldots,\epsilon^+)$.
By Proposition \ref{proposition::cusp-quasi-isom} and Proposition \ref{proposition::bilipschitz-path-Bcp} (c), each subarc $\tau$ satisfies
\begin{equation*}
\frac{1}{2} l_B(\tau) \ge
l_{\Moduli_h}(F(\tau)) \ge
\min\left\{
\frac{1}{K_3(F,\epsilon_0)},
\frac{1}{K_7(F,\epsilon^+)}
\right\} l_B(\tau')
- \max\left\{
K_4(F,\epsilon_0), K_8(F,\epsilon^+)
\right\},
\end{equation*}
where $\tau'$ is the hyperbolic geodesic rel isotopic to $\tau$.

We conclude the proof with
\begin{align*}
l_{\mathcal{M}_h}(F(\beta))
=&
\sum_{\delta} l_{\mathcal{M}_h}(F(\delta))
+
\sum_{\sigma} l_{\mathcal{M}_h}(F(\sigma))
+
\sum_{\tau} l_{\mathcal{M}_h}(F(\tau)) \\
\ge&
\frac{1}{K_5(F,\epsilon_0)}
\sum_{\delta} l_B(\delta')
+
\frac{1}{K_7(F,\epsilon^+)}
\sum_{\sigma} l_B(\sigma') \\
&+
\sum_{\tau} \left(
\min\left\{
\frac{1}{K_3(F,\epsilon_0)},
\frac{1}{K_7(F,\epsilon^+)}
\right\} l_B(\tau')
- \max\left\{
K_4(F,\epsilon_0), K_8(F,\epsilon^+)
\right\}
\right) \\
\ge&
\min\left\{
\frac{1}{K_3(F,\epsilon_0)},
\frac{1}{K_5(F,\epsilon_0)},
\frac{1}{K_7(F,\epsilon^+)}
\right\}
\left(
\sum_\delta l_B(\delta') +
\sum_\sigma l_B(\sigma') +
\sum_\tau l_B(\tau')
\right) \\
&-
6 \max\left\{
K_4(F,\epsilon_0), K_8(F,\epsilon^+)
\right\} \\
\ge&
\min\left\{
\frac{1}{K_3(F,\epsilon_0)},
\frac{1}{K_5(F,\epsilon_0)},
\frac{1}{K_7(F,\epsilon^+)}
\right\}
l_B(\beta')
-
6 \max\left\{
K_4(F,\epsilon_0), K_8(F,\epsilon^+)
\right\}.
\end{align*}
\end{proof}

\begin{proof}[Proof of Theorem \ref{thmx::qiit}]
By Corollary \ref{corollary::torsion-peripheral-qiit}, the holomorphic map $F$ does not satisfy $\qiit(F)$ if some peripheral monodromy has finite order.
This theorem comes from Theorem \ref{thmx::shrinking} (b).
\end{proof}

\begin{remark}
The implication $\qiit(F) \Rightarrow \qii(F)$ gives an alternative proof of Theorem \ref{thmx::qii} using Theorem \ref{thmx::qiit}.
\end{remark}

\subsection{Shrinking maps have quasi-isometric embedded lifts}
\label{subsection::qiet}

\begin{proof}[Proof of Theorem \ref{thmx::shrinking} (c)]

Suppose that $F$ satisfies $\qiet(F)$, namely, there exist $\lambda \ge 1$, $\epsilon \ge 0$ such that
\begin{equation*}
\frac{1}{2} d_{\Hyperbolic^2}(\widetilde{b_1},\widetilde{b_2})
\ge
\dtei{h}(\widetilde{F}(\widetilde{b_1}),\widetilde{F}(\widetilde{b_2}))
\ge
\frac{1}{\lambda} d_{\Hyperbolic^2}(\widetilde{b_1},\widetilde{b_2}) - \epsilon
\end{equation*}
for every $\widetilde{b_1}$ and $\widetilde{b_2} \in \Hyperbolic^2$.
Fix $\widetilde{b_0} \in \Hyperbolic^2$, which is a lift of $b_0 \in B$.
Set $\widetilde{q_0} = \widetilde{F}(\widetilde{b_0})$.
Consider the orbits $\pi_1(B,t) \cdot \widetilde{b_0}$ and $F_*(\pi_1(B,t)) \cdot \widetilde{q_0}$.
Therefore, for every $\phi_1, \phi_2 \in \pi_1(B,t)$, we get
\begin{equation*}
d_{\pi_1(B,t) \curvearrowright \Hyperbolic^2}(\phi_1, \phi_2) = d_{\Hyperbolic^2}(\phi_1 \cdot \widetilde{b_0}, \phi_2 \cdot \widetilde{b_0})
\end{equation*}
and
\begin{equation*}
d_{\Mod_h \curvearrowright \Teich_h}(F_* \phi_1, F_* \phi_2) =
\dtei{h}(\widetilde{F}(\phi_1 \cdot \widetilde{b_0}), \widetilde{F}(\phi_2 \cdot \widetilde{b_0})) = 
\dtei{h}( F_* \phi_1 \cdot \widetilde{q_0}, F_* \phi_2 \cdot \widetilde{q_0}).
\end{equation*}
Hence $F_*$ is a quasi-isometric embedding.

On the other hand, suppose that $F_*:(\pi_1(B,t), d_{\pi_1(B,t) \curvearrowright \Hyperbolic^2}) \rightarrow (\Mod_h, d_{\Mod_h \curvearrowright \Teich_h})$
is a quasi-isometric embedding with the parameters $(\lambda,\epsilon)$, namely, for every lifts $\widetilde{t_L}$ and $\widetilde{t_R}$ of $t$, we get
\begin{equation*}
\frac{1}{2} d_{\Hyperbolic^2}(\widetilde{t_L},\widetilde{t_R}) \ge
\dtei{h}(\widetilde{F}(\widetilde{t_L}), \widetilde{F}(\widetilde{t_R})) \ge
\frac{1}{\lambda} d_{\Hyperbolic^2}(\widetilde{t_L},\widetilde{t_R}) - \epsilon.
\end{equation*}
Let $\widetilde{\beta} \subset \Hyperbolic^2$ be a geodesic segment joining $\widetilde{b_L}$ to $\widetilde{b_R}$, where $\widetilde{b_L}$, $\widetilde{b_R}$ are lifts of $b_L$, $b_R \in B$, respectively.
Set $\widetilde{q_L} = \widetilde{F}(\widetilde{b_L})$,
$\widetilde{q_R} = \widetilde{F}(\widetilde{b_R})$.
We use $\mathcal{U} \subset \Hyperbolic^2$ to denote the union of all the lifts of standard cusps regions of $B$.
There are then four cases to consider.

\begin{enumerate}[label=(\roman*).]
\item Suppose that $\widetilde{b_L}, \widetilde{b_R} \in \overline{ \Hyperbolic^2 \setminus \mathcal{U} }$.

There exist lifs $\widetilde{t_L}$ and $\widetilde{t_R}$ of $t \in B$ such that
$d_{\Hyperbolic^2}(\widetilde{t_L}, \widetilde{b_L}) \le \diam_B(B_{\text{cp}})$
and
$d_{\Hyperbolic^2}(\widetilde{t_R}, \widetilde{b_R}) \le \diam_B(B_{\text{cp}})$,
where $B_{\text{cp}} \subset B$ is obtained from $B$ by removing all the standard cusp regions.
Therefore, we get
\begin{align*}
\dtei{h}(\widetilde{F}(\widetilde{b_L}),\widetilde{F}(\widetilde{b_R}))
&\ge
\dtei{h}(\widetilde{F}(\widetilde{t_L}),\widetilde{F}(\widetilde{t_R}))
- \dtei{h}(\widetilde{F}(\widetilde{t_L}),\widetilde{F}(\widetilde{b_L}))
- \dtei{h}(\widetilde{F}(\widetilde{t_R}),\widetilde{F}(\widetilde{b_R})) \\
&\ge
\frac{1}{\lambda} d_{\Hyperbolic^2}(\widetilde{t_L},\widetilde{t_R}) - \epsilon
- \frac{1}{2}\left(
d_{\Hyperbolic^2}(\widetilde{t_L},\widetilde{b_L})
+ d_{\Hyperbolic^2}(\widetilde{t_R},\widetilde{b_R})
\right) \\
&\ge
\frac{1}{\lambda} d_{\Hyperbolic^2}(\widetilde{b_L},\widetilde{b_R}) - 2 \diam_B(B_{\text{cp}}) - \epsilon - \diam_B(B_{\text{cp}}).
\end{align*}

To simplify the notation, we use $(\lambda_{(\RNum{1})}, \epsilon_{(\RNum{1})})$ to denote the parameters in the above desired quasi-isometric embedding.

\item Suppose that there exists a lift $\widetilde{U}\subset \Hyperbolic^2$ of some standard cusp region such that $\widetilde{b_L}, \widetilde{b_R} \in \widetilde{U}$.

By Proposition \ref{proposition::cusp-quasi-isom}, we get
\begin{equation*}
\dtei{h}(\widetilde{q_L},\widetilde{q_R}) \ge \frac{1}{K_1(F)} d_{\Hyperbolic^2}(\widetilde{b_L}, \widetilde{b_R}) - K_2(F).
\end{equation*}

We use $(\lambda_{(\RNum{2})}, \epsilon_{(\RNum{2})})$ to denote the parameters in the above desired quasi-isometric embedding.

\item Suppose that there exists a lift $\widetilde{U} \subset \Hyperbolic^2$ of some standard cusp region such that $\widetilde{b_L} \in \widetilde{U}$ but $\widetilde{b_R} \in \overline{\Hyperbolic^2 \setminus \mathcal{U}}$.

Consider the intersection $\widetilde{b_M} \coloneqq \widetilde{\beta} \cap \partial \widetilde{U}$,
which satisfies
\begin{equation*}
d_{\Hyperbolic^2}(\widetilde{b_L},\widetilde{b_R}) =
d_{\Hyperbolic^2}(\widetilde{b_L},\widetilde{b_M}) +
d_{\Hyperbolic^2}(\widetilde{b_M},\widetilde{b_R}).
\end{equation*}
Set $\widetilde{q_M}=\widetilde{F}(\widetilde{b_M})$.

If $d_{\Hyperbolic^2}(\widetilde{b_L},\widetilde{b_M}) \le \frac{1}{2\lambda_{(\RNum{1}})} d_{\Hyperbolic^2}(\widetilde{b_M},\widetilde{b_R})$,
then
\begin{align*}
d_\Teich(\widetilde{q_L},\widetilde{q_R})
&\ge
d_\Teich(\widetilde{q_M},\widetilde{q_R}) - d_\Teich(\widetilde{q_L},\widetilde{q_M})
\ge
\frac{1}{\lambda_{(\RNum{1})}} d_{\Hyperbolic^2}(\widetilde{b_M},\widetilde{b_R}) - \epsilon_{(\RNum{1})} - d_{\Hyperbolic^2}(\widetilde{b_L},\widetilde{b_M}) \\
&\ge \frac{1}{2\lambda_{(\RNum{1})}} d_{\Hyperbolic^2}(\widetilde{b_M},\widetilde{b_R}) - \epsilon_{(\RNum{1})}
\ge \frac{1}{2\lambda_{(\RNum{1})}}
\frac{1}{1+\frac{1}{2 \lambda_{(\RNum{1})}}}
d_{\Hyperbolic^2}(\widetilde{b_L},\widetilde{b_R}) - \epsilon_{(\RNum{1})}.
\end{align*}

If $d_{\Hyperbolic^2}(\widetilde{b_M},\widetilde{b_R}) \le \frac{1}{2\lambda_{(\RNum{2}})} d_{\Hyperbolic^2}(\widetilde{b_L},\widetilde{b_M})$,
then
\begin{align*}
d_\Teich(\widetilde{q_L},\widetilde{q_R})
&\ge
d_\Teich(\widetilde{q_L},\widetilde{q_M}) - d_\Teich(\widetilde{q_M},\widetilde{q_R})
\ge
\frac{1}{\lambda_{(\RNum{2})}} d_{\Hyperbolic^2}(\widetilde{b_L},\widetilde{b_M}) - \epsilon_{(\RNum{2})} - d_{\Hyperbolic^2}(\widetilde{b_M},\widetilde{b_R}) \\
&\ge \frac{1}{2\lambda_{(\RNum{2})}} d_{\Hyperbolic^2}(\widetilde{b_L},\widetilde{b_M}) - \epsilon_{(\RNum{2})}
\ge \frac{1}{2\lambda_{(\RNum{2})}}
\frac{1}{1+\frac{1}{2 \lambda_{(\RNum{2})}}}
d_{\Hyperbolic^2}(\widetilde{b_L},\widetilde{b_R}) - \epsilon_{(\RNum{2})}.
\end{align*}

From now on, we suppose that
\begin{equation*}
d_{\Hyperbolic^2}(\widetilde{b_L},\widetilde{b_M}) \ge \frac{1}{2\lambda_{(\RNum{1})}} d_{\Hyperbolic^2}(\widetilde{b_M},\widetilde{b_R})
\quad
\text{and}
\quad
d_{\Hyperbolic^2}(\widetilde{b_M},\widetilde{b_R}) \ge \frac{1}{2\lambda_{(\RNum{2})}} d_{\Hyperbolic^2}(\widetilde{b_L},\widetilde{b_M}).
\end{equation*}
Let $\widetilde{b_H} \in \partial \widetilde{U}$ be such that $d_{\Hyperbolic^2}(\widetilde{b_L},\widetilde{b_H}) = \log (2 / \epsilon_L)$, where $\epsilon_L = \epsilon_U(b_L)$.
We get
\begin{align*}
d_\Teich(\widetilde{q_L},\widetilde{q_R})
&\ge
d_\Teich(\widetilde{q_H},\widetilde{q_R}) - d_\Teich(\widetilde{q_L},\widetilde{q_H})
\ge
\frac{1}{\lambda_{(\RNum{1})}}
d_{\Hyperbolic^2}(\widetilde{b_H},\widetilde{b_R}) - \epsilon_{(\RNum{1})}
- \log{\frac{2}{\epsilon_L}} \\
&\ge
\frac{1}{\lambda_{(\RNum{1})}}
\left(
d_{\Hyperbolic^2}(\widetilde{b_L},\widetilde{b_R}) - d_{\Hyperbolic^2}(\widetilde{b_L},\widetilde{b_H})
\right)
- \epsilon_{(\RNum{1})}
- \log{\frac{2}{\epsilon_L}} \\
&=
\frac{1}{\lambda_{(\RNum{1})}}
d_{\Hyperbolic^2}(\widetilde{b_L},\widetilde{b_R})
- \epsilon_{(\RNum{1})}
- \left(1+\frac{1}{\lambda_{(\RNum{1})}}\right)
\log{\frac{2}{\epsilon_L}} \\
&\ge
\left(1+\frac{1}{2\lambda_{(\RNum{2})}} \right)
\frac{1}{\lambda_{(\RNum{1})}}
\cdot d_{\Hyperbolic^2}(\widetilde{b_L},\widetilde{b_M})
- \epsilon_{(\RNum{1})}
- \left(1+\frac{1}{\lambda_{(\RNum{1})}}\right)
\log{\frac{2}{\epsilon_L}}.
\end{align*}
If $d_{\Hyperbolic^2}(\widetilde{b_L},\widetilde{b_M}) \ge K' \cdot \log{\frac{2}{\epsilon_L}}$,
where $K' \coloneqq
\left(
1 + \frac{1}{\lambda_{(\RNum{1})}}
\right) /
\left(
\frac{1}{2}
\left(
1 + \frac{1}{2 \lambda_{(\RNum{2})}}
\right)
\frac{1}{\lambda_{(\RNum{2})}}
\right)$,
then
\begin{align*}
d_\Teich(\widetilde{q_L},\widetilde{q_R})
&\ge
\frac{1}{2}
\left(
1 + \frac{1}{2\lambda_{(\RNum{2})}}
\right)
\frac{1}{\lambda_{(\RNum{1})}}
d_{\Hyperbolic^2}(\widetilde{b_L},\widetilde{b_M}) - \epsilon_{(\RNum{1})} \\
&\ge
\frac{1}{2}
\left(
1 + \frac{1}{2\lambda_{(\RNum{2})}}
\right)
\frac{1}{\lambda_{(\RNum{1})}}
\frac{1}{1+2\lambda_{(\RNum{1})}}
d_{\Hyperbolic^2}(\widetilde{b_L},\widetilde{b_M}) - \epsilon_{(\RNum{1})}.
\end{align*}

Otherwise, consider the case when $d_{\Hyperbolic^2}(\widetilde{b_L},\widetilde{b_M}) \le K' \cdot \log{\frac{2}{\epsilon_L}}$.
As in Lemma \ref{lemma::translation_length} (b), we suppose that the peripheral monodromy corresponding to $\widetilde{U}$ has the power $\phi^\mu = T_{\alpha_1}^{r_1}\circ \cdots \circ T_{\alpha_m}^{r_m}$.
By Wolpert's inequality and Proposition \ref{proposition::l_epsilon},
\begin{align*}
\dtei{h}(\widetilde{q_L},\widetilde{q_R})
&\ge
\frac{1}{2} \log \frac{L_{\alpha_i}(\widetilde{q_R})}{L_{\alpha_i}(\widetilde{q_L})} \\
&\ge
\frac{1}{2} \log
\frac{
\max\limits_{\widetilde{b}\in \Hyperbolic^2 \setminus \mathcal{U}}
\left\{ \sys( \widetilde{F}(\widetilde{b}) )
\right\}
}{ K^+ \cdot \epsilon_L }
=
\frac{1}{2} \log \frac{2}{\epsilon_L}
+
\frac{1}{2} \log
\frac{
\max\limits_{\widetilde{b}\in \Hyperbolic^2 \setminus \mathcal{U}}
\left\{ \sys( \widetilde{F}(\widetilde{b}) ) \right\}
}{ 2 K^+ } \\
&\ge
\frac{1}{2K'} d_{\Hyperbolic^2}(\widetilde{b_L},\widetilde{b_M})
+
\frac{1}{2} \log
\frac{\max\limits_{\widetilde{b}\in \Hyperbolic^2 \setminus \mathcal{U}}
\left\{ \sys( \widetilde{F}(\widetilde{b}) ) \right\}
}{ 2 K^+ } \\
&\ge
\frac{1}{2K'(1+2\lambda_{(\RNum{1})})} d_{\Hyperbolic^2}(\widetilde{b_L},\widetilde{b_M})
+
\frac{1}{2} \log
\frac{\max\limits_{\widetilde{b}\in \Hyperbolic^2 \setminus \mathcal{U}}
\left\{ \sys( \widetilde{F}(\widetilde{b}) ) \right\}
}{ 2 K^+ }.
\end{align*}

We use $(\lambda_{(\RNum{3})}, \epsilon_{(\RNum{3})})$ to denote the parameters in the desired quasi-isometric embedding.

\item Suppose that there exist distinct lifts $\widetilde{U_L}$ and $\widetilde{U_R}$ of standard cusp regions such that
$\widetilde{b_L} \in \widetilde{U_L}$ and
$\widetilde{b_R} \in \widetilde{U_R}$.

Consider the intersections $\widetilde{b_G} \coloneqq \widetilde{\beta} \cap \partial \widetilde{U_L}$
and
$\widetilde{b_D} \coloneqq \widetilde{\beta} \cap \partial \widetilde{U_R}$,
which satisfy
\begin{equation*}
\dhyp(\widetilde{b_L},\widetilde{b_R}) = 
\dhyp(\widetilde{b_L},\widetilde{b_G}) +
\dhyp(\widetilde{b_G},\widetilde{b_D}) +
\dhyp(\widetilde{b_D},\widetilde{b_R}).
\end{equation*}
Set $\widetilde{q_G} = \widetilde{F}(\widetilde{b_G})$
and $\widetilde{q_D} = \widetilde{F}(\widetilde{b_D})$.

As in the argument for the case (\RNum{3}), we suppose that
\begin{align*}
\dhyp(\widetilde{b_L},\widetilde{b_G}) \ge \frac{1}{2 \lambda_{(\RNum{3})}} \dhyp(\widetilde{b_G},\widetilde{b_R})
\quad
\text{and}
\quad
\dhyp(\widetilde{b_D},\widetilde{b_R}) \ge \frac{1}{2 \lambda_{(\RNum{3})}} \dhyp(\widetilde{b_L},\widetilde{b_D}), \\
\dhyp(\widetilde{b_L},\widetilde{b_D}) \ge \frac{1}{2 \lambda_{(\RNum{2})}} \dhyp(\widetilde{b_D},\widetilde{b_R})
\quad
\text{and}
\quad
\dhyp(\widetilde{b_G},\widetilde{b_R}) \ge \frac{1}{2 \lambda_{(\RNum{2})}} \dhyp(\widetilde{b_L},\widetilde{b_G}).
\end{align*}

Let $\widetilde{b_H} \in \partial \widetilde{U_L}$ be such that $\dhyp(\widetilde{b_L},\widetilde{b_H}) = \log (2/\epsilon_L)$. 
As in the case (\RNum{3}), we get
\begin{equation*}
\dtei{h}(\widetilde{q_L},\widetilde{q_R}) \ge
\frac{1}{\lambda_{(\RNum{3})}}
\left(
1+\frac{1}{2\lambda_{(\RNum{2})}}
\right)
\dhyp(\widetilde{b_L},\widetilde{b_G})
- \epsilon_{(\RNum{3})}
- \left(
1+\frac{1}{\lambda_{(\RNum{3})}}
\log \frac{2}{\epsilon_L}
\right).
\end{equation*}
If $\dhyp(\widetilde{b_L},\widetilde{b_G}) \ge K' \cdot \log \frac{2}{\epsilon_L}$, where
$K' \coloneqq
\left(
1 + \frac{1}{\lambda_{(\RNum{3})}}
\right) /
\left(
\frac{1}{2}
\left(
1 + \frac{1}{2 \lambda_{(\RNum{2})}}
\right)
\frac{1}{\lambda_{(\RNum{3})}}
\right)$,
then we get the desired quasi-isometric embedding.

Otherwise, consider the case when
$\dhyp(\widetilde{b_L},\widetilde{b_G}) \le K' \cdot \log \frac{2}{\epsilon_L}$.
As in Lemma \ref{lemma::translation_length} (b), we suppose that the peripheral monodromies along $\widetilde{\beta}$ corresponding to $\widetilde{U_L}$ and $\widetilde{U_R}$ have the power
$\phi_1^{\mu_1} = \prod T_{\alpha_i}^{r_i}$ and
$\phi_2^{\mu_2} = \prod T_{\gamma_j}^{s_j}$, respectively.
By Lemma \ref{lemma::peripherals_intersecting},
the multicurves $\boldsymbol{\alpha} = \{\alpha_i\}$ intersects with $\boldsymbol{\gamma} = \{\gamma_j\}$.
Suppose that $\Int(\alpha_i,\gamma_j) \ge 1$, for some $i$ and $j$.
By the collar inequality and Proposition \ref{proposition::l_epsilon}, we get
\begin{equation*}
L_{\alpha_i}(\widetilde{q_R}) \ge 2 \arcsinh \frac{1}{\sinh (K^+ \cdot \epsilon_R / 2)}
\ge 2 \arcsinh \frac{1}{\sinh K^+}.
\end{equation*}
Therefore, by Wolpert's inequality, we have
\begin{align*}
\dtei{h}(\widetilde{q_L},\widetilde{q_R})
&\ge
\frac{1}{2} \log \frac{2}{\epsilon_L} + \frac{1}{2} \log \arcsinh \frac{1}{\sinh K^+} \\
&\ge 
\frac{1}{2K^+} \dhyp(\widetilde{b_L},\widetilde{b_G}) + \frac{1}{2} \log \arcsinh \frac{1}{\sinh K^+} \\
&\ge
\frac{1}{2K^+ \cdot (1+2\lambda_{(\RNum{3})})} \dhyp(\widetilde{b_L},\widetilde{b_R}) + \frac{1}{2} \log \arcsinh \frac{1}{\sinh K^+}.
\end{align*}
\end{enumerate}

This completes the proof of Theorem \ref{thmx::shrinking}.
\end{proof}

\sloppy
\printbibliography[
    heading=bibintoc,
    title={References}
]

\end{document}